\begin{document}
\title{Average-case matrix discrepancy: satisfiability bounds}
\date{}
\author{Antoine Maillard\thanks{\texttt{antoine.maillard@inria.fr}.}}
\affil{Inria Paris \& DIENS, Ecole Normale Sup\'erieure, PSL Research University. 
\\ Department of Mathematics, ETH Z\"urich, Switzerland.}

\maketitle

\begin{abstract}
    Given a sequence of $d \times d$ symmetric matrices $\{\bW_i\}_{i=1}^n$, and a margin $\Delta > 0$, we investigate whether it is possible to find signs $(\eps_1, \dots, \eps_n) \in \{\pm 1\}^n$ such that the operator norm of the signed sum satisfies $\|\sum_{i=1}^n \eps_i \bW_i\|_\op \leq \Delta$.
    \cite{kunisky2023online} recently introduced a random version of this problem, where the matrices $\{\bW_i\}_{i=1}^n$ are drawn from the Gaussian orthogonal ensemble. 
    This model can be seen as a random variant of the celebrated Matrix Spencer conjecture, and as a matrix-valued analog of the symmetric binary perceptron in statistical physics.
    In this work, we establish a satisfiability transition in this problem as $n, d \to \infty$ with $n / d^2 \to \tau > 0$.
    Our main results are twofold.
    First, we prove that the expected number of solutions with margin $\Delta = \kappa \sqrt{n}$ has a sharp threshold at a critical $\tau_1(\kappa)$: 
    for $\tau < \tau_1(\kappa)$ the problem is typically unsatisfiable, while for $\tau > \tau_1(\kappa)$ the average number of solutions becomes exponentially large.
    Second, combining a second-moment method with recent results from \cite{altschuler2023zero} on margin concentration in perceptron-type problems, we identify a second threshold $\tau_2(\kappa)$, 
    such that for $\tau > \tau_2(\kappa)$ the problem admits solutions with high probability.
    In particular, we establish that a system of $n = \Theta(d^2)$ Gaussian random matrices can be balanced so that the spectrum of the resulting matrix macroscopically shrinks compared to the typical semicircle law.
    Finally, under a technical assumption, we show that there exists values of $(\tau,\kappa)$ for which the number of solutions has large variance, 
    implying the failure of the second moment method and uncovering a richer picture than in the vector-analog symmetric binary perceptron problem.
    Our proofs rely on concentration inequalities and large deviation properties for the law of correlated Gaussian matrices under spectral norm constraints.
\end{abstract}

\newpage
\setcounter{tocdepth}{2}
\tableofcontents

\newpage
\section{Introduction and main results}\label{sec:intro}
\subsection{Setting and related literature}

We start by introducing in Sections~\ref{subsubsec:discrepancy} and \ref{subsubsec:sbp} two sets of independent definitions and models,
both being important motivations behind the problem we study, which is discussed in Section~\ref{subsubsec:average_mdiscrepancy}.

\subsubsection{Discrepancy theory}\label{subsubsec:discrepancy}

Computing the \emph{discrepancy} (i.e.\ the best possible balancing) of a collection of sets or vectors 
is a classical question in mathematics and theoretical computer science. 
It has wide-ranging applications in combinatorics, computational geometry,
experimental design, and the theory of approximation algorithms, to name a few. 
The reader will find in \cite{spencer1994ten,matousek2009geometric,chen2014panorama} a detailed account of the history and applications of 
discrepancy theory.
Arguably one of the most celebrated results in this area is the following theorem, known as Spencer's ``six deviations suffice''~\citep{spencer1985six}.
\begin{theorem}[\cite{spencer1985six}]
    \label{thm:spencer}
    There exists $C > 0$ such that for all $n, d \geq 1$, and all $\bu_1, \cdots, \bu_n \in \bbR^d$ with $\|\bu_i\|_\infty \leq 1$ for all $i \in [n]$: 
    \begin{equation*}
       \disc(\bu_1, \cdots, \bu_n) \coloneqq \min_{\eps \in \{\pm 1\}^n} \left\|\sum_{i=1}^n \eps_i \bu_i\right\|_\infty \leq
       \begin{dcases}
       C \sqrt{n \max \left(1, \log \frac{d}{n}\right)} &\textrm{ if } n \leq d, \\
       C \sqrt{d} &\textrm{ if } n > d.
       \end{dcases}
    \end{equation*}
\end{theorem}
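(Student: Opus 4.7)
The plan is to prove Theorem~\ref{thm:spencer} via the classical \emph{partial coloring} strategy, reducing to a single partial coloring lemma that is iterated. Concretely, I would first establish: there exist universal constants $\alpha \in (0,1)$ and $C_0 > 0$ such that for any $\bu_1, \ldots, \bu_n \in \bbR^d$ with $\|\bu_i\|_\infty \leq 1$, one can find $\chi \in \{-1,0,+1\}^n$ with at least $\alpha n$ nonzero coordinates and $\|\sum_{i} \chi_i \bu_i\|_\infty \leq C_0 \sqrt{n \max(1, \log(d/n))}$. Granted this lemma, the theorem follows by iteration: apply it to obtain a partial coloring fixing $\alpha n$ signs with error $C_0 \sqrt{n}$ in the regime $n \geq d$, then recurse on the remaining $(1-\alpha)n$ uncolored vectors, which still have $\|\bu_i\|_\infty \leq 1$. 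The total discrepancy is bounded by a geometric sum $\sum_{k \geq 0} C_0 \sqrt{(1-\alpha)^k n} = O(\sqrt{n})$, and a similar telescoping handles the $n < d$ case.

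For the partial coloring lemma, I would use Spencer's entropy/pigeonhole argument. Consider the map $\Phi : \{\pm 1\}^n \to \bbR^d$ defined by $\Phi(\eps) = \sum_i \eps_i \bu_i$. For each coordinate $j \in [d]$, $\Phi(\eps)_j$ is a sum of bounded random signs, so by Hoeffding a uniformly random $\eps$ has $|\Phi(\eps)_j| \lesssim \sqrt{n}$ with overwhelming probability, and more generally exhibits sub-Gaussian tails at scale $\sqrt{n}$. The key step is to partition $\bbR^d$ into rectangular boxes whose widths depend on the coordinate's current ``height'': narrow boxes of side $\Theta(\sqrt{n})$ near the origin, and progressively wider boxes in the tails, calibrated so that the expected number of vectors $\eps$ landing in each box is essentially balanced. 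A careful entropy calculation (Spencer's partition of scales) shows the total number of non-empty boxes is at most $2^{(1-\alpha) n}$ for an appropriate $\alpha > 0$.

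By pigeonhole, some box then contains at least $2^{\alpha n}$ sign vectors; a Sauer–Shelah type argument (or a direct Hamming-distance count) shows that within this set one can find two sign vectors $\eps, \eps'$ differing on at least $\alpha n$ coordinates. Setting $\chi := (\eps - \eps')/2 \in \{-1,0,+1\}^n$, the containment in a common box of diameter $\lesssim \sqrt{n}$ gives $\|\sum_i \chi_i \bu_i\|_\infty \leq C_0 \sqrt{n}$, proving the lemma when $n \geq d$. For $n < d$, the same scheme is used but with box widths scaled by $\sqrt{\log(d/n)}$, reflecting the weaker Hoeffding concentration required to union-bound over $d$ coordinates.

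The main obstacle is the entropy estimate underlying the partial coloring lemma: one must balance two competing requirements, namely that the boxes be fine enough so that two sign vectors in the same box yield a partial coloring with small discrepancy, yet coarse enough so the total number of non-empty boxes is strictly smaller than $2^n$ by an exponential factor. Spencer's insight is that a \emph{non-uniform} partition tailored to the Gaussian-like tail profile of $\Phi(\eps)_j$ achieves both simultaneously; verifying the exact entropy bookkeeping is the technical heart of the proof. A secondary, conceptual obstacle is to ensure the iteration in the reduction step still applies to the residual instance, which requires only that $\|\bu_i\|_\infty \leq 1$ is preserved --- a property that holds trivially since we never modify the vectors themselves, only the set of indices still to be colored.
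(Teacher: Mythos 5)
The paper does not prove Theorem~\ref{thm:spencer}: it is cited directly from \cite{spencer1985six}, and the only remark the paper makes is that the case $n>d$ reduces to $n=d$ ``using classical arguments based on iterated rounding, see e.g.~\cite{spencer1994ten}.'' Your proposal is a reasonable, faithful summary of Spencer's original entropy/partial-coloring argument, and as a sketch of the proof for $n \le d$ it is essentially correct (the partial coloring lemma, the volume/pigeonhole step to find two sign vectors in a common cell at Hamming distance $\ge \alpha n$, and the geometric iteration are all standard and sound, modulo the well-known bookkeeping you flag).

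However, there is a genuine gap in how you handle $n > d$, which is precisely the case the paper singles out as requiring a separate argument. Your iteration yields a geometric sum $\sum_{k\ge 0} C_0\sqrt{(1-\alpha)^k n} = O(\sqrt{n})$, but for $n > d$ the theorem asserts the strictly stronger bound $O(\sqrt{d})$, and the very first round already contributes $C_0\sqrt{n} \gg \sqrt{d}$ when $n\gg d$. Nothing in your proposal recovers the $\sqrt{d}$ bound; the parenthetical remark that ``a similar telescoping handles the $n<d$ case'' is pointing in the wrong direction, since the $n < d$ case is the one your iteration naturally treats. What is missing is the linear-algebraic preprocessing (the Beck--Fiala / iterated rounding step the paper alludes to): take a vertex of the polytope $\{x \in [-1,1]^n : \sum_i x_i \bu_i = 0\}$; since there are only $d$ equality constraints, at least $n-d$ box constraints are tight, i.e.\ at least $n-d$ coordinates of $x$ are already in $\{\pm1\}$. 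Fixing those signs reduces the problem to at most $d$ residual ``fractional'' vectors (with the fractional discrepancy contributing only an $O(1)$-scaling of each remaining vector), and only then does one apply the partial coloring iteration with error scale $\sqrt{d}$. Without this reduction your argument proves $\disc = O(\sqrt{n})$ for $n\ge d$, not the claimed $O(\sqrt{d})$.
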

\noindent
The second case $n > d$ can be deduced from the result for $n = d$ using classical arguments based on iterated rounding, see e.g.~\cite{spencer1994ten}.
Interestingly, Spencer's theorem shows that one can drastically improve over a naive pick of random signings $\eps_1, \cdots, \eps_n \iid \Unif(\{\pm 1\})$. Indeed, 
it is not hard to show that (taking $n = d$ for simplicity) random signings achieve $\left\|\sum_{i=1}^n \eps_i \bu_i\right\|_\infty = \Theta(\sqrt{n \log n})$ for a worst-case choice of $(\bu_i)_{i=1}^n$, 
with high probability as $n \to \infty$. Spencer's theorem thus implies the existence of a signing $\eps \in \{\pm 1\}^n$, 
which depends on the value of the $\bu_i$'s, and whose discrepancy generically improves over the one of random signs by a logarithmic factor.
While the original proof of \cite{spencer1985six} is not constructive, there has been recently a great number of results regarding efficient algorithmic constructions of 
these signings. We will discuss this point further in Section~\ref{subsec:discussion}.

\myskip 
\textbf{Matrix discrepancy --}
In the present work we consider the problem of \emph{matrix discrepancy}. Given a set of $n$ symmetric $d \times d$ matrices $\bA_1, \cdots, \bA_n$, 
we aim to characterize the following discrepancy objective ($\|\bA\|_\op \coloneqq \max_{\|\bx\|_2=1} \|\bA \bx\|_2$ is the spectral, or operator, norm):
\begin{equation}
    \label{eq:def_disc}
    \disc(\bA_1, \cdots, \bA_n) \coloneqq \min_{\eps \in \{\pm 1\}^n} \left\| \sum_{i=1}^n \eps_i \bA_i \right\|_\op.
\end{equation}
As already mentioned, a foundational question in discrepancy is how the discrepancy objective compares to the one achieved by a \emph{random} choice of the 
signings $\eps_i \iid \Unif(\{\pm 1\})$: matrix discrepancy is thus intimately connected to the study of large random matrices.
Matrix discrepancy has also
been shown to have implications in the theory of quantum random access codes~\citep{hopkins2022matrix,bansal2023resolving}, 
in generalizations of the Kadison-Singer problem~\citep{marcus2015interlacing,kyng2020four}, as well as 
graph sparsification~\citep{batson2014twice} to name a few.

\myskip
The celebrated ``Matrix Spencer'' conjecture~\citep{zouzias2012matrix,meka2014discrepancy1} is likely the most important open problem in matrix discrepancy.
It asserts that Spencer's Theorem~\ref{thm:spencer} can be generalized to the matrix setting, as follows.
\begin{conjecture}[Matrix Spencer~\citep{zouzias2012matrix,meka2014discrepancy1}]\label{conj:mspencer}
    There exists $C > 0$ such that for all $n, d \geq 1$, and all $\bA_1, \cdots, \bA_n$ symmetric $d \times d$ matrices with $\max_{i \in [n]}\|\bA_i\|_\op \leq 1$:
    \begin{equation*}
        \disc(\bA_1, \cdots, \bA_n) \leq C \sqrt{n \max\left(1, \log \frac{d}{n}\right)}.
    \end{equation*}
    In particular, the discrepancy is $\mcO(\sqrt{n})$ for $d = n$.
\end{conjecture}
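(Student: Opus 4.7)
The plan is to follow Spencer's original strategy, lifted to matrices. I would first establish a \emph{matrix partial coloring lemma}: given symmetric $\bA_1, \ldots, \bA_n$ with $\|\bA_i\|_\op \leq 1$, there exists $\eps \in [-1,1]^n$ with at least $\alpha n$ coordinates in $\{\pm 1\}$ (for some absolute $\alpha > 0$) and $\|\sum_i \eps_i \bA_i\|_\op \leq C\sqrt{n \max(1,\log(d/n))}$. Iterating this lemma $\mcO(\log n)$ times, re-entering at each stage with the sub-collection indexed by the still-free coordinates, would produce a full $\pm 1$ coloring whose total discrepancy is the geometric sum of the successive partial bounds, yielding the conjectured estimate.

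The partial coloring lemma itself I would attack via the convex-geometric framework of Giannopoulos, made algorithmic by Bansal and subsequent authors. Define the symmetric convex body $K_\Delta \coloneqq \{\bx \in \bbR^n : \|\sum_i x_i \bA_i\|_\op \leq \Delta\}$; a now-standard theorem guarantees a partial coloring inside $K_\Delta \cap [-1,1]^n$ provided the Gaussian measure satisfies $\gamma_n(K_\Delta) \geq e^{-\beta n}$ for a universal constant $\beta > 0$. The whole problem thus reduces to the Gaussian anti-concentration estimate
\begin{equation*}
\bbP_{\bg \sim \mcN(0,\bI_n)}\!\left[\Big\|\sum_{i=1}^n g_i \bA_i\Big\|_\op \leq C \sqrt{n \max(1,\log(d/n))}\right] \geq e^{-\beta n},
\end{equation*}
trading a combinatorial question for a purely probabilistic one about a correlated Gaussian matrix.

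The main obstacle -- and the reason Matrix Spencer remains open -- is precisely this Gaussian estimate in the regime $n = d$. Matrix Khintchine and matrix Bernstein inequalities (Oliveira, Tropp) yield $\bbE \|\sum_i g_i \bA_i\|_\op = \mcO(\sqrt{n \log d})$, so the median of the spectral norm already sits a full $\sqrt{\log d}$ factor above the target $\sqrt{n}$; a naive $\eta$-net over the unit sphere of $\bbR^d$ used to pass from expectation to tail control costs a further $\exp(\Theta(d))$ test vectors, producing a $\sqrt{d}$ rather than $\sqrt{\log d}$ bound. I would therefore first aim at partial results: for $\bA_i$ of rank $r \ll d$ one recovers $\sqrt{n \log r}$ via a net on the effective range, and recent works (Dadush--Jiang--Reis, Hopkins--Raghavendra--Shetty, Bansal--Jiang--Meka) exploit bounded Schatten-$4$ norm or low stable rank to shave the $\sqrt{\log d}$. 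In the fully general case a genuinely new ingredient -- perhaps a non-commutative generic-chaining argument exploiting simultaneous spectral structure of the whole collection -- seems necessary, and this is exactly where current technology stops; absent such an idea, I do not expect to close the gap, which is why the paper instead investigates an average-case variant amenable to sharp concentration methods.
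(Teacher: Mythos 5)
The statement you were asked to prove is Conjecture~\ref{conj:mspencer}, not a theorem: the paper does not prove it, and indeed Matrix Spencer remains open. Your response is therefore the correct one in spirit — you recognized that no proof exists, laid out the standard reduction (iterated partial coloring \`a la Spencer, reduction to a Gaussian measure lower bound on the convex body $K_\Delta$ via the Giannopoulos/Bansal framework), and correctly located the obstruction: non-commutative Khintchine gives $\mcO(\sqrt{n \log d})$ for the median of $\|\sum_i g_i \bA_i\|_\op$, which is a $\sqrt{\log d}$ factor above the conjectured $\sqrt{n}$ at $d = n$, and no current anti-concentration technology closes this gap in full generality.

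Two small refinements worth noting. First, the reduction to the Gaussian lower bound $\gamma_n(K_\Delta) \geq e^{-\beta n}$ is not quite what the state of the art uses directly; the sharpest partial results (\cite{bansal2023resolving}, building on \cite{bandeira2023matrix}) proceed by improving matrix Khintchine itself under a low-rank (or low stable-rank) hypothesis, rather than by attacking the Gaussian measure of $K_\Delta$ head-on. The paper cites exactly this: Matrix Spencer is known when $\rk(\bA_i) \lesssim n / \log^3 n$. Second, your claim that the iteration of the partial-coloring lemma yields the full bound as a geometric sum is the standard Spencer argument and is sound, but note that in the matrix setting the re-entry step is more delicate because the ``remaining'' matrices do not commute with the already-fixed partial sum; this is benign for the operator-norm objective but should be stated carefully. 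Neither of these affects your main conclusion, which matches the paper's framing: the conjecture is open, the obstruction is precisely the $\sqrt{\log d}$ in non-commutative Khintchine, and this is why the paper studies an average-case Gaussian model where one can bring large-deviation and concentration machinery to bear instead.
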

\noindent
We stress that Spencer's theorem can be seen as the special case of Conjecture~\ref{conj:mspencer} in which all $\bA_i$ commute with each other (and are thus diagonalizable in the same basis).
Moreover, a weaker form of Conjecture~\ref{conj:mspencer}, with a bound $\mcO(\sqrt{n \log d})$ on the right-hand side, can easily be shown to be achievable using a random choice 
of signs $\eps_i \iid \Unif(\{\pm 1\})$, using the non-commutative Khintchine inequality of~\cite{lust1991non}.
Despite a recent surge in efforts~\citep{levy2017deterministic,hopkins2022matrix,dadush2022new}, Conjecture~\ref{conj:mspencer} remains open at the time of this writing. 
The best-known result is a proof of Matrix Spencer if we additionally assume
$\rk(\bA_i) \lesssim n / \log^3 n$~\citep{bansal2023resolving}, and is based on the recent improvements over the non-commutative Khintchine inequality of \cite{bandeira2023matrix}.
In this work, we consider an average-case version of Conjecture~\ref{conj:mspencer}, introduced in Section~\ref{subsubsec:average_mdiscrepancy}.

\subsubsection{Random vector discrepancy, and the symmetric binary perceptron}\label{subsubsec:sbp}

\noindent
A natural question in vector discrepancy (i.e.\ in the setting of Theorem~\ref{thm:spencer}) is to shift our attention to average-case settings, where the $\bu_i$ are 
chosen randomly rather than potentially adversarially.
The typical discrepancy for random vectors is now well understood: in the regime $n = \omega(d)$ (i.e.\ $d = \smallO(n)$, many more signs than dimensions), 
\cite{turner2020balancing} established the following result (with partial results preceding in~\cite{karmarkar1986probabilistic,costello2009balancing}):
\begin{theorem}[\cite{turner2020balancing}]\label{thm:rvector_disc}
    Assume that $\bu_1, \cdots, \bu_n \iid \mcN(0, \Id_d)$ and that $n = \omega(d)$.
    Then  
    \begin{align*}
        \plim_{d \to \infty} \frac{\disc(\bu_1, \cdots, \bu_n)}{\sqrt{\frac{\pi n}{2}} 2^{-n/d}} = 1,
    \end{align*}
    where the limit is meant in probability.
\end{theorem}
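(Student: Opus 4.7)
My plan is to run a first and second moment analysis on the number of ``good'' signings. Define
\[
N(T) \;:=\; \#\bigl\{\eps \in \{\pm 1\}^n : \|S_\eps\|_\infty \leq T\bigr\},
\qquad S_\eps := \sum_{i=1}^n \eps_i \bu_i.
\]
For each fixed $\eps$, rotational invariance gives $S_\eps \sim \mcN(0, n\,\Id_d)$, so its components are i.i.d.\ $\mcN(0,n)$ and $\mathbb{E}[N(T)] = 2^n [\mathrm{erf}(T/\sqrt{2n})]^d$. Under the scaling $T = c\sqrt{\pi n/2}\,2^{-n/d}$, the fact that $n/d \to \infty$ forces $T/\sqrt n \to 0$, so $\mathrm{erf}(T/\sqrt{2n}) = (T/\sqrt n)\sqrt{2/\pi}\,(1+o(1))$ and $\mathbb{E}[N(T)] = c^d(1+o(1))$. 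This already pinpoints $c = 1$ as the sharp threshold.

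\textbf{Lower bound.} For $c = 1 - \delta$, the computation above yields $\mathbb{E}[N(T)] \to 0$; Markov's inequality gives $\mathbb{P}[\disc(\bu_1,\dots,\bu_n) \leq T] \to 0$, hence $\plim \disc / (\sqrt{\pi n/2}\,2^{-n/d}) \geq 1$.

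\textbf{Upper bound.} For $c = 1 + \delta$, I would apply Paley--Zygmund and show $\mathbb{E}[N(T)^2]/\mathbb{E}[N(T)]^2 \to 1$. Let $\rho(\eps, \eps') := n^{-1}\langle \eps, \eps'\rangle$; then $(S_\eps, S_{\eps'})$ is jointly Gaussian with $d$ decoupled coordinates, each a standard bivariate Gaussian of correlation $\rho$, so
\[
\mathbb{P}[\|S_\eps\|_\infty, \|S_{\eps'}\|_\infty \leq T] = q(\rho, T/\sqrt n)^d,
\qquad q(\rho, t) := \mathbb{P}[|Z_1|, |Z_2| \leq t].
\]
A second-order expansion gives $q(\rho, t) = \tfrac{2t^2}{\pi\sqrt{1-\rho^2}}\,(1+O(t^2))$ uniformly for $|\rho|$ bounded away from $1$. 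Averaging over uniform $\eps, \eps'$ (so that $\rho$ is the empirical mean of $n$ i.i.d.\ Rademachers) then yields
\[
\frac{\mathbb{E}[N(T)^2]}{\mathbb{E}[N(T)]^2} \;=\; \mathbb{E}_\rho\bigl[(1-\rho^2)^{-d/2}\bigr]\,(1+o(1)).
\]
Since $\rho$ concentrates at scale $n^{-1/2}$ and $d\,\mathbb{E}[\rho^2] = d/n = o(1)$, the right-hand side tends to $1$.

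\textbf{Main obstacle.} The delicate point is a rigorous partition of the overlap sum, because the expansion of $q(\rho,t)$ degenerates as $|\rho| \to 1$. I would split the sum into three regimes: (i) the bulk $|\rho| \leq n^{-1/4}$, treated directly by the expansion above; (ii) an intermediate band, where the entropic weight $2^{-2n}\binom{n}{(1+\rho)n/2} \sim \exp\{-n\,\mathrm{KL}((1+\rho)/2 \,\|\, 1/2)\}$ dominates the blow-up $(1-\rho^2)^{-d/2}$ precisely because $n \gg d$; and (iii) the extremes $|\rho|$ close to $1$, controlled by the trivial bound $q(\rho,t) \leq 2\Phi(t) - 1$ together with the negligible combinatorial weight of nearly (anti)parallel sign vectors. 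The clean separation of these scales is exactly what the assumption $n = \omega(d)$ buys; in the regime $n = \Theta(d)$ (the symmetric binary perceptron), the plain second-moment method is known to fail and much more intricate arguments are required.
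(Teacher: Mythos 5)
This result is quoted from \cite{turner2020balancing} without proof, so there is no internal argument of the paper to compare against; I evaluate your sketch on its own, noting that it reconstructs the first/second-moment strategy of \cite{turner2020balancing}.

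The outline is sound, and identifying the degeneration of the bivariate estimate near $|\rho|=1$ as the main obstacle is correct. Two points, however, are glossed over in a way that genuinely matters.

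\emph{(a) The error term in the power $d$.} Passing from $q(\rho,t)=\frac{2t^2}{\pi\sqrt{1-\rho^2}}(1+O(t^2))$ to $\EE[N(T)^2]/\EE[N(T)]^2=\EE_\rho[(1-\rho^2)^{-d/2}](1+o(1))$ quietly moves the correction factor outside the $d$-th power. Under the scaling $t=\Theta(2^{-n/d})$ one has $dt^2=\Theta(d\,4^{-n/d})$, which is \emph{not} $o(1)$ for all $n=\omega(d)$: if $n/d=o(\log d)$ (for instance $n/d\asymp\log\log d$), then $dt^2\to\infty$ and $(1+O(t^2))^d$ need not even be bounded. What rescues the argument is a cancellation that your expansion hides: the $\rho$-independent part of the $O(t^2)$ correction is identical in $q(\rho,t)$ and $q(0,t)$, so that in fact
$q(\rho,t)/q(0,t)=(1-\rho^2)^{-1/2}\bigl(1+O(t^2\rho^2/(1-\rho^2))\bigr)$.
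The extra factor of $\rho^2$ is essential: after raising to the power $d$ and averaging against the binomial law of $\rho$ (which has $\EE[\rho^2]=1/n$), the exponentiated correction is $\exp\{O(dt^2/n)\}\to 1$. Without exhibiting this cancellation, the stated estimate is too weak on a nonempty range of admissible $(n,d)$.

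\emph{(b) The regime boundaries drift with $d/n$.} The cutoff between your regimes (ii) and (iii) cannot be a fixed $\eta>0$. Using $q(\rho,t)\le 2\Phi(t)-1$, the contribution of $\{|\rho|\ge 1-\eta\}$ is of order $2^{nH(\eta/2)}/\EE[N(T)]\asymp 2^{nH(\eta/2)}(1+\delta)^{-d}$, which vanishes only if $H(\eta/2)\lesssim d/n$; hence $\eta=\eta_n\to 0$. At the upper edge of regime (ii), $(1-\rho^2)^{-d/2}\approx\exp\{(d/2)\log(1/\eta_n)\}$ is then polynomially large and must be weighed against the binomial tail. This bookkeeping is the real content of the proof and is more delicate than a fixed three-way partition suggests --- though it is indeed exactly what $n=\omega(d)$ makes possible, and is what \cite{turner2020balancing} carries out.

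One small remark: the displayed identity $\EE[N(T)^2]/\EE[N(T)]^2=\EE_\rho[(1-\rho^2)^{-d/2}](1+o(1))$ cannot hold literally, since the right-hand expectation is $+\infty$ (the events $\rho=\pm 1$ have positive probability); you clearly intend the truncated version from the bulk regime, but this should be stated.
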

\noindent
While Spencer's Theorem~\ref{thm:spencer} does not apply to such random vectors (as one can easily show that $\|\bu_i\|_\infty \sim \sqrt{2 \log d} \gg 1$),
the randomness makes the problem amenable to a detailed mathematical analysis with different tools.

\myskip
In a complementary way, 
recent works have characterized
very precisely the minimal discrepancy in the proportional regime $n = \Theta(d)$, as a function of the ``aspect ratio'' $\beta \coloneqq \lim_{d \to \infty} n/d$.
This setting of random vector discrepancy is an instance of the \emph{symmetric binary perceptron} (SBP), a random constraint satisfaction problem 
which was introduced in~\cite{aubin2019storage}
as a variant to the classical asymmetric binary perceptron. 
The latter is  
a simple model of a neural network storing random patterns, which has a long history of study in 
computer science, statistical physics, and probability theory~\citep{cover1965geometrical,gardner1988space,gardner1988optimal,krauth1989storage,sompolinsky1990learning,talagrand1999intersecting,talagrand2010mean}.
For a \emph{margin} $K > 0$, and in the limit $n/d \to \beta > 0$, the question of satisfiability in the SBP was introduced in~\cite{aubin2019storage} 
as:
\begin{center}
   \textit{
    Given $\bg_1, \cdots, \bg_d \iid \mcN(0, \Id_n)$, can we find $\eps \in \{\pm 1\}^n$ 
    such that $\max_{i \in [d]}|\langle \bg_i, \eps \rangle| \leq K \sqrt{n}$? 
   }
\end{center}
Letting $(\bu_i)_k \coloneqq (\bg_j)_k$, so that $(\bu_i)_{i=1}^n \iid \mcN(0, \Id_d)$ it is clear that the 
the SBP can be thought as an average-case version of vector discrepancy, 
and more precisely to the setting of Theorem~\ref{thm:rvector_disc} with $n = \Theta(d)$.
The SBP has received significant attention in the recent literature:
its relative simplicity allows studying in detail the relation between its structural properties and the performance of 
solving algorithms, 
which can then guide the theory of more complex statistical models,
see e.g.\ \cite{aubin2019storage,perkins2021frozen,abbe2022proof,gamarnik2022algorithms,kizildag2023symmetric,barbier2024atypical,alaoui2024hardness,barbier2024escape}.
Concretely, it is shown in \cite{aubin2019storage,abbe2022proof,gamarnik2022algorithms} (among other results) that 
the SBP undergoes the following sharp satisfiability/unsatisfiability transition.
\begin{theorem}[Sharp threshold for the SBP~\citep{aubin2019storage,abbe2022proof}]\label{thm:trans_SBP}
    Let $K > 0$.
    Define
    \begin{equation*}
        \beta_1(K) \coloneqq - \frac{\log \bbP_{z \sim \mcN(0,1)}[|z| \leq K]}{\log 2}.
    \end{equation*}
    For $\bu_1, \cdots, \bu_n \iid \mcN(0, \Id_d)$,
    let 
    \begin{equation*}
        Z_K \coloneqq \left|\left\{\eps \in \{\pm 1\}^n \, : \, \left\|\sum_{i=1}^n \eps_i \bu_i \right\|_\infty \leq K \sqrt{n}\right\}\right|.
    \end{equation*}
    Then in the limit $n, d \to \infty$ with $n / d \to \beta > 0$:
    \begin{itemize}
        \item[$(i)$] If $\beta < \beta_1(K)$, $\lim_{d \to \infty} \bbP[Z_K \geq 1] = 0$. 
        \item[$(ii)$] If $\beta > \beta_1(K)$, $\lim_{d \to \infty} \bbP[Z_K \geq 1] = 1$. 
    \end{itemize}
\end{theorem}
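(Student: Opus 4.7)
For any fixed $\eps \in \{\pm 1\}^n$, by Gaussian rotation invariance $\sum_i \eps_i \bu_i \sim \mcN(0, n\, \Id_d)$, so $\bbP\bigl[\|\sum_i \eps_i \bu_i\|_\infty \leq K \sqrt{n}\bigr] = \bbP_{z \sim \mcN(0,1)}[|z| \leq K]^d$. Summing over $\eps$, $\bbE[Z_K] = 2^n \bbP[|z| \leq K]^d$, and with $n/d \to \beta$ we obtain $\log_2 \bbE[Z_K] = d(\beta - \beta_1(K)) + o(d)$. When $\beta < \beta_1(K)$ this tends to $-\infty$, and Markov's inequality gives $\bbP[Z_K \geq 1] \leq \bbE[Z_K] \to 0$, proving $(i)$.

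\textbf{Second moment and a constant lower bound on $\bbP[Z_K \geq 1]$.} For $(ii)$, fix $\beta > \beta_1(K)$. I expand $\bbE[Z_K^2] = \sum_{\eps, \eps'} p(q)^d$ with $q \coloneqq \langle \eps, \eps' \rangle / n$ and $p(q) = \bbP[(Y, Y') \in [-K, K]^2]$ where $(Y, Y')$ is a standard Gaussian pair of correlation $q$; the product-over-coordinates structure comes from the independence of the entries of each $\bu_i$. Grouping pairs by overlap and applying Stirling,
\begin{equation*}
    \bbE[Z_K^2] \asymp \int_{-1}^{1} \exp\!\Bigl\{n\bigl[(\log 2)(1 + h(q)) + \beta^{-1} \log p(q)\bigr]\Bigr\}\, dq,
\end{equation*}
where $h$ denotes the binary entropy of $(1+q)/2$. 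At $q = 0$ the exponent equals $2(1 - \beta_1(K)/\beta) \log 2$, matching $\log \bbE[Z_K]^2$; at $q = \pm 1$ it is only half of that, hence strictly subdominant exactly when $\beta > \beta_1(K)$. The technical core is to verify that $q = 0$ is the \emph{global} maximizer on $(-1, 1)$, which reduces to a calculus check: using $h''(0) = -1/\log 2$ and a direct computation of the Hessian of $\log p(q)$ at zero, one shows the exponent is strictly concave near $0$, and a monotonicity argument on $\log p(q)$ (via its Hermite expansion in $q$) rules out interior competitors. This yields $\bbE[Z_K^2] \leq C\, \bbE[Z_K]^2$, and Paley-Zygmund gives $\bbP[Z_K \geq 1] \geq c > 0$.

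\textbf{From positive probability to probability one.} The genuinely hard step, and the one that makes Theorem~\ref{thm:trans_SBP} a \emph{sharp} threshold, is upgrading the bound $\bbP[Z_K \geq 1] \geq c$ to $\bbP[Z_K \geq 1] \to 1$. Since $\{Z_K \geq 1\}$ is monotone in $K$ and the first-moment curve $\beta_1(K)$ is continuous and strictly increasing, I would follow \cite{abbe2022proof} and invoke a Friedgut/Bourgain-type sharp-threshold theorem to argue that the transition window in $K$ shrinks to zero, so any fixed $\beta > \beta_1(K)$ lies strictly above the true threshold. An equivalent route, also implemented in \cite{abbe2022proof}, goes through the \emph{planted} perceptron: condition on a uniformly chosen solution, establish a local CLT for its partition function to obtain $(1+o(1))$ concentration, and transfer this back to the null model via contiguity. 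The main obstacle here is sharpening the second-moment analysis from mere boundedness to an asymptotic $\bbE[Z_K^2]/\bbE[Z_K]^2 \to 1$ on the typical overlap profile, which requires a full Laplace-method expansion of the overlap integral around $q = 0$ and a careful treatment of the endpoint contributions at $q = \pm 1$.
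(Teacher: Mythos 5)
Theorem~\ref{thm:trans_SBP} is stated in the paper as a background result and is attributed to \cite{aubin2019storage,abbe2022proof}; the paper does not supply a proof of it, so there is no paper-internal argument to compare yours against. That said, your sketch correctly reconstructs the standard route from those references: the first-moment/Markov computation giving $(i)$ with $\beta_1(K) = -\log_2 \bbP[|z|\le K]$ is right, the overlap decomposition $\EE[Z_K^2]=2^n\sum_l\binom{n}{l}p(q_l)^d$ with $p(q)$ the bivariate Gaussian box probability is the correct second-moment setup (and the evenness of $p$, which forces the candidate maximizer to $q=0$, is the feature that makes the \emph{symmetric} perceptron tractable for all $K$), and you correctly flag that Paley--Zygmund only yields a constant lower bound, so a further boosting step is unavoidable. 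The two mechanisms you name for that step — a sharp-threshold argument and the planted/contiguity route with a local CLT — are indeed the ones used in \cite{abbe2022proof} (and in \cite{perkins2021frozen}). It is worth noting that this is exactly the architecture the paper adapts to the matrix setting: Theorem~\ref{thm:first_moment} is the first-moment analogue, Proposition~\ref{prop:2nd_moment} the second-moment analogue, and the boosting is done via Theorem~7 of \cite{altschuler2023zero} (Lemma~\ref{lemma:dylan}) rather than a planted-model CLT; the paper also shows (Theorem~\ref{thm:fail_second_moment}) that the global-maximizer-at-$q=0$ step, which you correctly identify as the technical core and which holds for every $K$ in the SBP, provably fails in part of the matrix phase diagram. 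The one place your sketch is thin is the claim that a ``monotonicity argument on $\log p(q)$ via its Hermite expansion'' rules out interior competitors — this is nontrivial, was only partially handled in \cite{aubin2019storage} for a restricted range of $K$, and required additional work in \cite{abbe2022proof} to cover all $K>0$; as written it is an assertion rather than an argument, but it is the right thing to aim for.
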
 
\noindent
The analysis of \cite{aubin2019storage} is based on the second moment method, and has been refined in subsequent works~\citep{abbe2022proof,gamarnik2022algorithms}. In particular, 
$\beta_1(K)$ can be derived as the critical value of $\beta$ where the value of $\EE[Z_K]$ 
transitions from being exponentially small (in $n,d$) to exponentially large.

\subsubsection{Average-case matrix discrepancy}\label{subsubsec:average_mdiscrepancy}

In this work, 
alongside \cite{kunisky2023online},
we initiate the study of the average-case matrix discrepancy problem.
Concretely, given $n, d \geq 1$ a \emph{margin} $\kappa > 0$, and $\bW_1, \cdots, \bW_n$ random independent matrices, drawn with centered Gaussian i.i.d.\ elements (up to symmetry) with variance $1/d$, we seek to answer the question: 
\begin{center}
   \textrm{\textbf{(P)}} : \textit{Can we find signs $(\eps_1, \cdots, \eps_n) \in \{\pm 1\}^n$ such that $\|\sum_{i=1}^n \eps_i \bW_i\|_\op \leq \kappa \sqrt{n}$?}
\end{center}
\noindent 
This problem can be seen as an average-case analog of Conjecture~\ref{conj:mspencer}, with the simple Gaussian random matrix model serving as a natural starting point.
By investigating this simplified case in great detail, we firstly aim to gain insight, 
in order to possibly probe Conjecture~\ref{conj:mspencer} in more structured random matrix models in the future.
This would form an alternative direction to other recent works~\citep{hopkins2022matrix,dadush2022new,bansal2023resolving} towards a better understanding of the Matrix Spencer conjecture.

\myskip 
Additionally, \textbf{(P)} can naturally be thought of as the matrix discrepancy analog to the random vector discrepancy problem (or the symmetric binary perceptron)
described above:
a mapping can be achieved by modifying our model to
diagonal matrices $\bW_i$ with i.i.d.\ elements drawn from $\mcN(0,1)$ on the diagonal, 
and setting $(\bg_i)_k = (\bW_{i})_{kk}$.

\myskip
Importantly, we will focus on studying \textbf{(P)} in the case of a \emph{finite margin} $\kappa > 0$ as $n, d \to \infty$. 
In random vector discrepancy, the satisfiability transition for a finite margin occurs in the scale $n = \Theta(d)$, -- i.e.\ in the symmetric binary perceptron setting -- see Theorem~\ref{thm:trans_SBP}.
As we will see, in our model the critical scaling is rather $n = \Theta(d^2)$, and we will focus on this 
regime throughout our work.
Our approach to tackle \textbf{(P)} further builds on the methods introduced by \cite{aubin2019storage} for the SBP model.
Ultimately, the main goal we pursue is to obtain a detailed understanding of the satisfiability properties of \textbf{(P)}, 
and to reach a counterpart to Theorem~\ref{thm:trans_SBP} in the context of average-case matrix discrepancy.

\myskip 
\textbf{Parallel work --} 
Shortly after a first pre-print of the present manuscript was made available online, an independent study~\citep{wengiel2024asymptotic} appeared, exploring as well the asymptotic discrepancy of Gaussian i.i.d.\ matrices.
Their findings extend the approach of~\cite{turner2020balancing} for random vector discrepancy to this context,
by proving a counterpart to Theorem~\ref{thm:rvector_disc} for random matrix discrepancy.
Their results provide a sharp characterization in the regime $n = \omega(d^2)$ (i.e.\ $d = \smallO(\sqrt{n})$), and $\kappa = \smallO(1)$.
Technically, \cite{wengiel2024asymptotic} employs the second moment method, similar to our work, 
but relies on a general upper bound for the density of the joint law of the spectra of two correlated Gaussian matrices.
In contrast, the present work focuses on the critical regime $n = \Theta(d^2)$, where the asymptotic margin $\kappa > 0$ is finite.
In this critical regime, the bound used in~\cite{wengiel2024asymptotic} becomes vacuous, and we employ here different ideas to control this joint law.
Combining the results of~\cite{wengiel2024asymptotic} with ours, 
we achieve a nearly complete description of the asymptotic discrepancy of Gaussian i.i.d.\ matrices, 
leaving open only a fraction of the phase diagram in the critical regime $n = \Theta(d^2)$, see Fig.~\ref{fig:phase_diagram}.

\subsection{Main results}

\subsubsection*{Notations and background}

We denote $[d] \coloneqq \{1, \cdots, d\}$ the set of integers from $1$ to $d$,
and $\mcS_d$ the set of $d \times d$ real symmetric matrices.
For a function $V : \bbR \to \bbR$, and $\bS \in \mcS_d$ with eigenvalues $(\lambda_i)_{i=1}^d$, we define $V(\bS)$ as the matrix with the same eigenvectors as $\bS$,
and eigenvalues $(V(\lambda_i))_{i=1}^d$. 
We denote by $\|\bS\|_{\op} \coloneqq \max_{i \in [d]}|\lambda_i|$ the operator norm.
For any $B \subseteq \bbR$ we denote by $\mcM_1^+(B)$ the set of real probability distributions on $B$.
For a probability measure $\mu \in \mcM_1^+(\bbR)$ we denote $\Sigma(\mu) \coloneqq \int \mu(\rd x) \mu(\rd y) \log |x - y|$ its non-commutative entropy. 
We say that a random matrix $\bY \in \mcS_d$ is generated from the \emph{Gaussian Orthogonal Ensemble} $\GOE(d)$ if
\begin{equation*}
    Y_{ij} \iid \mcN(0, (1+\delta_{ij})/d) \textrm{ for } i \leq j.
\end{equation*}
The following celebrated result dates back to \cite{wigner1955characteristic}, and is one of the foundational results of random matrix theory. 
For a modern proof and further results, see~\cite{anderson2010introduction}.
\begin{theorem}[Semicircle law]\label{thm:wigner}
    Let $\bY \sim \GOE(d)$, with eigenvalues $(\lambda_i)_{i=1}^d$, and denote $\mu_\bY \coloneqq (1/d) \sum_{i=1}^d \delta_{\lambda_i}$ its empirical eigenvalue distribution.
    Then, almost surely,
    \begin{equation*}
        \begin{dcases}
            \mu_\bY &\xrightarrow[d \to \infty]{\textrm{weakly}} \rho_{\sci}(\rd x) = \frac{\sqrt{4-x^2}}{2\pi} \indi\{|x| \leq 2\}, \\
            \|\bY\|_\op = \max_{i \in [d]} |\lambda_i| &\xrightarrow[d \to \infty]{} 2.
        \end{dcases}
    \end{equation*}
\end{theorem}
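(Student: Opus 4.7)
The plan is to prove both statements via the classical method of moments, in the spirit of Wigner's original argument. I would first record that $\rho_\sci$ has vanishing odd moments and even moments equal to the Catalan numbers $C_m = \frac{1}{m+1}\binom{2m}{m}$, a direct calculation (e.g.\ via the substitution $x = 2\cos\theta$).

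The heart of the argument is then the computation of $\EE[\frac{1}{d}\mathrm{Tr}(\bY^k)]$. Expanding the trace as a sum over closed walks $i_1 \to i_2 \to \cdots \to i_k \to i_1$ in $[d]$ and applying Wick's formula to the Gaussian product $\EE[Y_{i_1 i_2} Y_{i_2 i_3} \cdots Y_{i_k i_1}]$ reduces the problem to an enumeration over pair partitions of the edge indices $\{1,\ldots,k\}$, which immediately kills the odd moments. For $k = 2m$, the key combinatorial lemma is that a closed walk whose edges are traversed in matching pairs visits at most $m+1$ distinct vertices, with equality iff the underlying multigraph is a tree and the pairing is non-crossing. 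After the $1/d$ normalization, the $O(1)$ contribution thus comes exactly from non-crossing pair partitions, counted by $C_m$, while crossing pairings contribute $O(1/d)$.

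To promote convergence of expectations to almost-sure convergence, a second-moment calculation (again via Wick's formula) gives $\mathrm{Var}[\frac{1}{d}\mathrm{Tr}(\bY^k)] = O(1/d^2)$ for every fixed $k$; a Chebyshev-plus-Borel-Cantelli argument then yields $\frac{1}{d}\mathrm{Tr}(\bY^k) \to \int x^k \rho_\sci(\rd x)$ almost surely for all $k$. Since $\rho_\sci$ is compactly supported and hence determined by its moments, the method of moments delivers $\mu_\bY \to \rho_\sci$ weakly a.s. For the operator norm, the bound $\liminf_{d\to\infty} \|\bY\|_\op \geq 2$ is immediate from weak convergence, since any neighborhood of $2$ has positive $\rho_\sci$-mass. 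For the matching upper bound I would use $\|\bY\|_\op^{2k} \leq \mathrm{Tr}(\bY^{2k})$ together with $\EE[\mathrm{Tr}(\bY^{2k})] = d \cdot C_k + O(1)$ for fixed $k$, pass to a slowly growing $k = k(d) \to \infty$, and invoke concentration to conclude $\limsup \|\bY\|_\op \leq 2$ a.s.

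The main technical obstacle is the combinatorial identification of the Catalan-number leading contribution: one must exhibit the bijection between non-crossing pair partitions and tree-shaped walks on $[d]$ saturating the $m+1$ vertex bound, and carefully verify that all other pairings are genuinely subleading. A secondary delicate point lies in the operator norm step, where $k(d)$ must be calibrated so that $C_{k(d)}^{1/(2k(d))} \to 2$ while $d^{1/(2k(d))} \to 1$, and the concentration of $\mathrm{Tr}(\bY^{2k(d)})$ remains strong enough to close the Borel-Cantelli argument.
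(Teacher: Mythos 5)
The paper does not prove Theorem~\ref{thm:wigner} at all: it is cited as a classical background result, with \cite{wigner1955characteristic} for the original and \cite{anderson2010introduction} for a modern treatment. There is therefore no in-paper proof to compare against. Your moment-method sketch is the standard textbook route — it is, in fact, essentially the argument in the cited reference \cite{anderson2010introduction} (Section~2.1 for the weak convergence, and the trace-moment bound with slowly growing power for the edge). The outline is sound: Wick expansion, the $m+1$ vertex bound for connected multigraphs on $\leq m$ distinct edges with equality on trees, the Catalan count of tree walks, $O(1/d^2)$ variance plus Borel--Cantelli for almost-sure moment convergence, and the Markov bound on $\Tr(\bY^{2k(d)})$ with $k(d) \asymp \log d$ for the operator norm upper bound. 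Two points you flag as ``delicate'' are exactly where the real work lies and deserve a sentence more: (i) the Wick pairing must pair each edge with a copy of \emph{itself} (same unordered endpoint pair), and it is these edge-matched pairings whose non-crossing representatives biject to Dyck paths; pairings with any edge of multiplicity one vanish, and pairings forcing a cycle lose a factor of $d$; (ii) for the norm bound the calibration is $\log d \ll k(d)$, with $k(d)$ small enough (e.g.\ $k(d) = C\log d$) that the moment bound $\EE[\Tr(\bY^{2k})] \leq d\, 4^k\,(1+o(1))$ remains valid; this gives a summable tail for each fixed $\eps>0$, and a diagonal argument over $\eps_n\downarrow 0$ then closes the almost-sure statement.
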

\noindent
$\rho_{\sci}$ in Theorem~\ref{thm:wigner} is often called the \emph{semicircle law}.
Finally, we generically denote constants as $C > 0$ (or $C_1 > 0, C_2 > 0, \cdots$), whose value may vary from line to line.
We will specify their possible dependency on parameters of the problem when relevant.

\subsubsection{Setting of the problem} 

Let $n, d \geq 1$, and $\bW_1, \cdots, \bW_n \iid \GOE(d)$.
For any $\kappa > 0$, we define: 
\begin{equation}
    \label{eq:def_Zkappa}
    Z_\kappa \coloneqq \# \left\{\eps \in \{\pm 1\}^n \, \textrm{s.t.} \, \left\|\sum_{i=1}^n \eps_i \bW_i\right\|_{\op} \leq \kappa \sqrt{n} \right\}.
\end{equation}
We refer to $n^{-1/2} \left\|\sum_{i=1}^n \eps_i \bW_i\right\|_{\op}$ as the \emph{margin} of $\eps \in \{\pm 1\}^n$. 
$Z_\kappa$ thus counts the number of signings $\eps \in \{\pm 1\}^n$ with margin at most $\kappa$.

\myskip
\textbf{The case $\kappa > 2$ --}
Clearly $(1/\sqrt{n})\sum_{i=1}^n \bW_i \sim \GOE(d)$, so that (with $\eps_i = 1$ for all $i \in [n]$), for any $\kappa > 2$, 
\begin{equation}\label{eq:kappa_gtr_2}
    \bbP\left[Z_\kappa \geq 1\right] \geq \bbP_{\bW \sim \GOE(d)}[\|\bW\|_\op \leq \kappa] \aeq 1 - \smallO_d(1),
\end{equation}
using Theorem~\ref{thm:wigner} in $(\rm a)$.
Notice that this bound is also the one given by a random choice of $\eps_i~\iid~\Unif(\{\pm 1\})$ (independently of $\{\bW_i\}_{i=1}^n$), as again in this case 
$n^{-1/2}\sum_{i} \eps_i \bW_i\sim\GOE(d)$.
In what follows, we therefore focus on the (interesting) regime $\kappa \in (0,2]$, in which a choice of signings \emph{dependent on the $\bW_i$} must be made in order to 
get a solution with margin at most $\kappa$.
Notice that this argument implies that Conjecture~\ref{conj:mspencer} holds straightforwardly for i.i.d.\ $\GOE(d)$ matrices.
For this reason, our interest in this problem stems primarily from its interpretation as a constraint satisfaction problem (e.g.\ as a matrix-analog of the SBP) and its connection to questions in random matrix theory, particularly in the critical regime $n = \Theta(d^2)$.
Nevertheless, we also view our understanding of the discrepancy of i.i.d.\ $\GOE(d)$ matrices as a foundational step toward exploring more structured random matrix models, for which Conjecture~\ref{conj:mspencer} is significantly more complex and presents an exciting avenue for future research.

\subsubsection{Asymptotics of the first moment}

Define, for $\kappa \in (0,2]$:
\begin{equation}
    \label{eq:tau_1st_moment}
    \tau_1(\kappa) \coloneqq
        \frac{1}{\log 2} \left[- \frac{\kappa^4}{128} + \frac{\kappa^2}{8} - \frac{1}{2} \log \frac{\kappa}{2} - \frac{3}{8}\right].
\end{equation}
Notice that $\tau_1(2) = 0$, and $\tau_1(\kappa) \to +\infty$ as $\kappa \downarrow 0$.
Our first main result is the following sharp asymptotics for the expectation of $Z_\kappa$.
\begin{theorem}[Asymptotics of the first moment]\label{thm:first_moment}
    Let $\kappa \in (0,2]$, and $\bW_1, \cdots, \bW_n \iid \GOE(d)$.
    Assume $n/d^2 \to \tau \in [0, \infty)$ as $n, d\to \infty$. 
    Then 
    \begin{equation}
        \label{eq:limit_logEZ}
        \lim_{d \to \infty} \frac{1}{d^2} \log \EE Z_\kappa = (\tau - \tau_1(\kappa)) \log 2,
    \end{equation}
    where $Z_\kappa$ is defined in eq.~\eqref{eq:def_Zkappa}.
    In particular, if $\tau < \tau_1(\kappa)$, then
    \begin{equation*}
        \lim_{d \to \infty} \bbP[Z_\kappa = 0] = \lim_{d \to \infty} \bbP\left[\min_{\eps \in \{\pm 1\}^n} \left\|\sum_{i=1}^n \eps_i \bW_i \right\|_\op > \kappa \sqrt{n}\right] = 1.
    \end{equation*}
\end{theorem}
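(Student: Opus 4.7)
By $\GOE(d)$ invariance under sign flips, and closure of the ensemble under scaled sums of independent copies, for every \emph{fixed} $\eps \in \{\pm 1\}^n$ the matrix $n^{-1/2}\sum_{i=1}^n \eps_i \bW_i$ is itself $\GOE(d)$-distributed. Summing over the $2^n$ signings inside the expectation immediately gives
\begin{equation*}
    \EE Z_\kappa = 2^n \cdot p_\kappa, \qquad p_\kappa \coloneqq \bbP_{\bW \sim \GOE(d)}\bigl[\|\bW\|_\op \leq \kappa\bigr],
\end{equation*}
so the problem reduces to estimating the probability of the atypical spectral event $\{\|\bW\|_\op \leq \kappa\}$ for $\kappa < 2$, which by Theorem~\ref{thm:wigner} is of large-deviation type with speed $d^2$.

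\textbf{Variational form of the rate.} I would invoke the Ben Arous--Guionnet LDP for the GOE empirical spectral measure at speed $d^2$, with good rate function $I(\mu) = \frac{1}{4}\int x^2\,\mu(\rd x) - \frac{1}{2}\Sigma(\mu) - \frac{3}{8}$, normalized so that $I(\rho_{\sci}) = 0$. Since $\{\|\bW\|_\op \leq \kappa\} = \{\supp \mu_\bW \subseteq [-\kappa,\kappa]\}$ is weakly closed, the LDP upper bound yields $\limsup d^{-2}\log p_\kappa \leq -J(\kappa)$ with
\begin{equation*}
    J(\kappa) \coloneqq \inf\bigl\{I(\mu) \, : \, \supp\mu \subseteq [-\kappa,\kappa]\bigr\}.
\end{equation*}
A matching lower bound comes from a direct saddle-point estimate on the joint eigenvalue density $\propto \exp(-\frac{d}{4}\sum\lambda_i^2)\prod_{i<j}|\lambda_i - \lambda_j|$ restricted to $[-\kappa,\kappa]^d$, localizing around the constrained equilibrium described below.

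\textbf{Solving the constrained Coulomb-gas problem.} For $\kappa < 2$ the constraint binds and $\supp \mu^*_\kappa = [-\kappa,\kappa]$. Differentiating the Euler--Lagrange equation $x^2/4 - \int\log|x-y|\,\mu^*_\kappa(\rd y) = \ell/2$ yields the finite Hilbert transform equation $\mathrm{PV}\int \mu^*_\kappa(y)/(x-y)\,\rd y = x/2$ on $(-\kappa,\kappa)$, whose Tricomi inversion combined with $\int\mu^*_\kappa = 1$ produces the explicit density
\begin{equation*}
    \mu^*_\kappa(x) = \frac{1}{2\pi\sqrt{\kappa^2-x^2}}\left(2 + \frac{\kappa^2}{2} - x^2\right)\indi\{|x|\leq\kappa\},
\end{equation*}
which is non-negative precisely for $\kappa \leq 2$ and reduces to the Wigner semicircle at $\kappa=2$ -- a welcome consistency check. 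Using $x = \kappa\sin\theta$ one then computes $\int x^2\,\mu^*_\kappa(\rd x) = \kappa^2/2 - \kappa^4/16$ and, evaluating the EL equation at $x=0$, $\int\log|y|\,\mu^*_\kappa(\rd y) = \log(\kappa/2) - \kappa^2/8$, hence $\ell/2 = \kappa^2/8 - \log(\kappa/2)$. Integrating the EL equation against $\mu^*_\kappa$ gives $\Sigma(\mu^*_\kappa) = \frac{1}{4}\int x^2\,\mu^*_\kappa(\rd x) - \ell/2$, and assembling these pieces yields
\begin{equation*}
    J(\kappa) = I(\mu^*_\kappa) = -\frac{\kappa^4}{128} + \frac{\kappa^2}{8} - \frac{1}{2}\log\frac{\kappa}{2} - \frac{3}{8} = \tau_1(\kappa)\log 2.
\end{equation*}

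\textbf{Conclusion and main obstacle.} Combining the previous steps gives $d^{-2}\log\EE Z_\kappa \to \tau\log 2 - J(\kappa) = (\tau - \tau_1(\kappa))\log 2$, which is the first claim. When $\tau < \tau_1(\kappa)$ the right-hand side is strictly negative, so Markov's inequality $\bbP[Z_\kappa \geq 1] \leq \EE Z_\kappa$ forces $\bbP[Z_\kappa = 0] \to 1$. The main technical burden lies in Step~3: while the Tricomi inversion is standard, the trigonometric evaluation of the single-point logarithmic potential (which is the origin of the $-\frac{1}{2}\log(\kappa/2)$ term characteristic of a hard-wall constrained Coulomb gas) and the careful bookkeeping of the additive constant $-\frac{3}{8}$ coming from $I(\rho_{\sci}) = 0$ both require attention; the identity $\mu^*_\kappa \to \rho_{\sci}$ and $J(\kappa) \to 0$ as $\kappa \uparrow 2$ provides a convenient sanity check that the constants are correct.
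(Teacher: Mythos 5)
Your proposal is correct and follows essentially the same route as the paper: reduce to $\EE Z_\kappa = 2^n\,\bbP[\|\bW\|_\op\leq\kappa]$, invoke the speed-$d^2$ LDP for the empirical spectral measure, solve the constrained Coulomb-gas variational problem via the Euler--Lagrange/Tricomi machinery, and close with Markov. The one small fork is in the LDP lower bound: you propose a direct saddle-point estimate on the joint eigenvalue density localized near the quantiles of the constrained equilibrium measure, which is exactly the paper's alternative proof in Appendix~\ref{sec:appendix_ldp}, whereas the main text instead cites the constrained-$\beta$-ensemble LDP of \cite{fan2015convergence} (Proposition~\ref{prop:ldp_empirical_measure_conditioned}) to get upper and lower bounds simultaneously; your explicit density, the moment $\int x^2\,\mu^*_\kappa=\kappa^2/2-\kappa^4/16$, the multiplier $\ell/2=\kappa^2/8-\log(\kappa/2)$, and the resulting value $J(\kappa)=\tau_1(\kappa)\log 2$ all agree with the paper's.
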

\noindent
In Section~\ref{sec:1st_moment} we carry out the proof of Theorem~\ref{thm:first_moment}. 
It relies on Proposition~\ref{prop:ldp_Wop}, which establishes the large deviations of the operator norm of a 
$\GOE(d)$ matrix, in the scale $d^2$. This is a consequence of now-classical results on the large deviations of the empirical measure 
in so-called $\beta$-matrix models~\citep{arous1997large,anderson2010introduction}, which yield the large deviation rate function in a variational form. 
Further, using results of logarithmic potential theory~\citep{saff2013logarithmic} and the Tricomi theorem~\citep{tricomi1985integral}, we are able to then solve this variational principle,
and its outcome gives eq.~\eqref{eq:tau_1st_moment}.
As a byproduct, we obtain the limiting spectral measure of a matrix $\bW~\sim~\GOE(d)$ constrained on the event $\|\bW\|_\op \leq \kappa$, see Theorem~\ref{thm:lsd_constrained_GOE}.

\myskip
\textbf{Relation to~\cite{kunisky2023online} --}
Theorem~\ref{thm:first_moment} is a refinement of Theorem~1.13 of \cite{kunisky2023online}, 
which shows that $Z_\kappa = 0$ with high probability if $\kappa \leq \delta \cdot 4^{-\tau}$, for an (unspecified) absolute constant $\delta > 0$.
For instance, the bound of \cite{kunisky2023online} correctly predicts $\tau_1(\kappa) \sim - \log \kappa / \log 4$ for $\kappa \to 0$, but fails to capture that $\tau_1(\kappa) \to 0$ continuously as $\kappa \to 2$. 

\subsubsection{The satisfiability region}

We start by introducing a function $\tau_2(\kappa)$, which will serve as a threshold for the validity of our satisfiability analysis.
Let $H(p) \coloneqq - p \log p - (1-p)\log (1-p)$ denote the ``binary entropy'' function.
\begin{proposition}\label{prop:tau2}
   For any $\eta > 0$, let $\delta_\eta \in (0,1)$ to be the unique solution to:
    \begin{equation}
        \label{eq:def_delta_eta}
        H\left(\frac{1+\delta}{2}\right) = \frac{\eta}{1+\eta} \log 2.  
    \end{equation}
    For any $\eta > 0$, let 
\begin{align}
    \nonumber
        \label{eq:def_ttau}
        \ttau(\eta,\kappa) &\coloneqq \max\left\{(1+\eta) \tau_1(\kappa), 
\frac{1+\delta_\eta^2}{2(1-\delta_\eta^2)^2}
    + \left[\frac{\delta_\eta(1+6\delta_\eta+3\delta_\eta^2+2\delta_\eta^3)}{(1-\delta_\eta^2)^3(1-\delta_\eta)}\right] \kappa \right.\\
    &
    \left.
     \hspace{3cm} + \left[ \frac{2(1+\delta_\eta)^5}{(1-\delta_\eta^2)^4} - \frac{(1+3 \delta_\eta^2)}{4(1-\delta_\eta^2)^3}\right] \kappa^2 
      + \frac{\kappa^4(1+3\delta_\eta^2)}{32(1-\delta_\eta^2)^3}
\right\}.
\end{align}
    We define
    \begin{equation}
        \label{eq:def_tau2}
        \tau_2(\kappa) \coloneqq \min_{u \in [0,\kappa]} \min_{\eta > 0} \ttau(\eta, u).
    \end{equation}
    Then $\kappa \mapsto \tau_2(\kappa)$ is a continuous and non-increasing function of $\kappa$.
\end{proposition}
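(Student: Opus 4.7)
The non-increasing property is immediate from~\eqref{eq:def_tau2}: since $[0,\kappa_1] \subseteq [0,\kappa_2]$ whenever $\kappa_1 \leq \kappa_2$, the outer infimum can only decrease (or stay equal) as $\kappa$ grows. All the substance of the proposition therefore lies in the continuity claim, which I would approach in three steps.

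\textbf{Step 1 — Smooth dependence of $\delta_\eta$ on $\eta$.} The left-hand side of~\eqref{eq:def_delta_eta} is the natural-logarithm binary entropy $h(\delta) \coloneqq -\frac{1+\delta}{2}\log\frac{1+\delta}{2} - \frac{1-\delta}{2}\log\frac{1-\delta}{2}$, which is a smooth strictly decreasing bijection $[0,1] \to [0,\log 2]$, while $\eta \mapsto \frac{\eta}{1+\eta}\log 2$ is a smooth strictly increasing bijection $(0,\infty) \to (0,\log 2)$. The implicit function theorem (the derivative $h'(\delta)$ is nonzero on $(0,1)$) yields that $\eta \mapsto \delta_\eta$ is a smooth diffeomorphism $(0,\infty) \to (0,1)$, with $\delta_\eta \uparrow 1$ as $\eta \downarrow 0$ and $\delta_\eta \downarrow 0$ as $\eta \to \infty$. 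Together with the smoothness of $\tau_1$ on $(0,2]$ and the explicit form~\eqref{eq:def_ttau}, this shows that $(\eta,u) \mapsto \ttau(\eta,u)$ is jointly continuous on $(0,\infty) \times (0,2]$.

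\textbf{Step 2 — Continuity of the inner minimum.} Set $f(u) \coloneqq \inf_{\eta > 0} \ttau(\eta,u)$. The key property is \emph{uniform coercivity} of $\ttau(\cdot,u)$ in $\eta$. On the one hand, as $\eta \downarrow 0$, the first summand $\frac{1+\delta_\eta^2}{2(1-\delta_\eta^2)^2}$ of the second argument of the max in~\eqref{eq:def_ttau} already diverges independently of $u$, since $\delta_\eta \to 1$. On the other hand, for any $u_0 < 2$ and $u \in (0,u_0]$, the first argument $(1+\eta)\tau_1(u) \geq (1+\eta)\tau_1(u_0)$ diverges as $\eta \to \infty$ because $\tau_1(u_0) > 0$. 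Consequently, on any compact $K \subset (0,2)$, there exist $0 < \eta_1 < \eta_2 < \infty$ depending only on $K$ such that the infimum defining $f(u)$ equals $\min_{\eta \in [\eta_1,\eta_2]} \ttau(\eta,u)$ for all $u \in K$. Berge's maximum theorem, applied with joint continuity from Step 1 on the compact $\eta$-window $[\eta_1,\eta_2]$, then gives continuity of $f$ on $K$, hence on $(0,2)$. At $u=0$, $\tau_1(0)=+\infty$, so $f(0)=+\infty$; at $u=2$, $\tau_1(2)=0$ and the infimum is attained in the limit $\eta \to \infty$, yielding the explicit finite value $f(2)$, with left-continuity obtained by a direct monotonicity argument on the crossover of the two branches in the max as $u \to 2^-$.

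\textbf{Step 3 — Continuity of $\tau_2$.} With $f$ continuous on $(0,2]$ and $f(0)=+\infty$, continuity of $\tau_2(\kappa) = \min_{u \in [0,\kappa]} f(u)$ follows from the standard fact that minimizing a continuous function over a compact set varying continuously in the parameter yields a continuous value function. Concretely, for a sequence $\kappa_n \to \kappa$, upper semi-continuity follows from the monotonicity established above, while for lower semi-continuity one extracts a convergent subsequence of minimizers $u_n^\star \in (0,\kappa_n]$ and uses continuity of $f$ at the limit $u^\star \in (0,\kappa]$.

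The main technical obstacle is Step 2: verifying \emph{uniform} coercivity of $\ttau$ in $\eta$, because the second argument of the max in~\eqref{eq:def_ttau} contains several terms carrying $(1-\delta_\eta)^{-k}$ singularities that need to be dominated simultaneously and uniformly in $u$. Once this coercivity is established, the reduction to a compact $\eta$-window brings the problem into the regime where Berge's theorem and elementary compactness arguments conclude the proof.
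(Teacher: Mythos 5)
Your proposal is correct but takes a genuinely different route from the paper. The paper exploits the specific monotonicity of the two branches of the max: writing $\ttau(\eta,\kappa) = \max\{f_\kappa(\eta), g_\kappa(\eta)\}$ with $f_\kappa(\eta) = (1+\eta)\tau_1(\kappa)$ affine increasing in $\eta$ and $g_\kappa(\eta)$ (the other branch) strictly decreasing in $\eta$ (since $\eta \mapsto \delta_\eta$ is smooth strictly decreasing and each $\kappa$-coefficient of $g_\kappa$ is strictly increasing in $\delta$), it concludes that $\min_{\eta>0}\ttau(\eta,\kappa)$ is attained at the unique $\eta^\star(\kappa)$ where the branches cross, that $\eta^\star$ depends continuously on $\kappa$, and hence so does $\bartau(\kappa)=f_\kappa(\eta^\star(\kappa))$. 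That concrete argument also delivers the equation~\eqref{eq:etastar}, which the paper then uses to numerically evaluate $\tau_2$. Your Berge-type argument is more abstract but valid, and it requires slightly less: you only use nonnegativity, not strict monotonicity, of the $\kappa$-coefficients of $g_\kappa$. One point worth stressing: the ``main technical obstacle'' you flag at the end is not actually open — your Step~2 already supplies both halves of the uniform coercivity, namely the $\eta\downarrow 0$ divergence coming from the $u$-independent term $\frac{1+\delta_\eta^2}{2(1-\delta_\eta^2)^2}$ (once the remaining $\kappa$-coefficients are checked nonnegative so they can be dropped from the lower bound), and the $\eta\to\infty$ divergence coming from $(1+\eta)\tau_1(u)\geq(1+\eta)\tau_1(u_0)>0$ for $u\leq u_0<2$, using that $\tau_1$ is decreasing. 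The one coefficient whose sign needs verification is that of $\kappa^2$, namely $\frac{2(1+\delta)^5}{(1-\delta^2)^4} - \frac{1+3\delta^2}{4(1-\delta^2)^3} = \frac{1}{(1-\delta)^3}\left[\frac{2(1+\delta)}{1-\delta} - \frac{1+3\delta^2}{4(1+\delta)^3}\right]$, whose bracket is bounded below by $2-1=1>0$ on $\delta\in(0,1)$. Finally, note that the endpoint $\kappa=2$, where $\tau_1(2)=0$ so the first branch vanishes identically and the $\eta\to\infty$ coercivity fails, does need the separate treatment you sketch; the paper's proof silently glosses over this same corner case when it asserts $f_\kappa(\infty)=\infty$ for all $\kappa$.
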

\noindent
Proposition~\ref{prop:tau2} is elementary, we prove it in Section~\ref{subsec:proof_properties_tau2} for completeness, along 
with a straightforward way to evaluate numerically $\tau_2(\kappa)$, see eq.~\eqref{eq:etastar}.
We are now ready to state our main result on the existence of solutions with a given required margin.
It is based on the second moment method, building upon similar techniques to the ones used in the symmetric binary perceptron~\citep{aubin2019storage}.
\begin{theorem}[Satisfiability region]\label{thm:second_moment}
    Let $\kappa \in (0, 2]$. 
    Let $n,d \geq 1$, such that, as $d \to \infty$, $n/d^2 \to \tau > \tau_2(\kappa)$ defined in eq.~\eqref{eq:def_tau2}. 
    For $\bW_1, \cdots, \bW_n~\iid~\GOE(d)$, we have (recall the definition of $Z_\kappa$ in eq.~\eqref{eq:def_Zkappa}):
    \begin{equation*}
        \lim_{d \to \infty} \bbP[Z_\kappa \geq 1] = \lim_{d \to \infty} \bbP\left[\min_{\eps \in \{\pm 1\}^n} \left\|\sum_{i=1}^n \eps_i \bW_i \right\|_\op \leq \kappa \sqrt{n}\right] = 1.
    \end{equation*} 
\end{theorem}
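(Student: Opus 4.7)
The approach I would take combines the Paley--Zygmund second moment method, applied to $Z_u$ at an auxiliary margin $u \in [0, \kappa]$, with the sharp margin-concentration (``zero window'') technique of \cite{altschuler2023zero}. The motivation for invoking the latter is that the second moment method alone yields at best $\liminf_d \bbP[Z_u \geq 1] \geq c > 0$, a constant-probability rather than high-probability statement. The margin-concentration results of \cite{altschuler2023zero}, adapted to the matrix-valued setting via Gaussian concentration of the $1$-Lipschitz functional $\eps \mapsto n^{-1/2}\|\sum_i \eps_i \bW_i\|_\op$, then allow upgrading positive satisfiability probability at margin $u$ to high satisfiability probability at any strictly larger margin $\kappa > u$. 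Since $\tau > \tau_2(\kappa)$, by continuity of $\ttau$ we may choose $u < \kappa$ and $\eta > 0$ with $\tau > \ttau(\eta, u)$, reducing the proof to showing $\liminf_d \bbP[Z_u \geq 1] > 0$.

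Expanding the second moment and grouping pairs of signings by their overlap $\delta = \langle \eps, \eps'\rangle/n$,
\begin{equation*}
\EE[Z_u^2] \,=\, 2^n \sum_{\delta} \binom{n}{(1-\delta)n/2}\, P(\delta, u), \qquad P(\delta, u) \coloneqq \bbP\bigl[\|\bX\|_\op \leq u,\, \|\bY\|_\op \leq u\bigr],
\end{equation*}
where $(\bX, \bY)$ is a pair of $\GOE(d)$ matrices of correlation $\delta$, realized as $\bY = \delta \bX + \sqrt{1-\delta^2}\, \bZ$ with $\bZ \sim \GOE(d)$ independent of $\bX$. I would split the overlap sum at the threshold $|\delta| = \delta_\eta$. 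For small overlaps $|\delta| \leq \delta_\eta$, the entropy bound $\binom{n}{(1-\delta)n/2} \leq 2^{n\eta/(1+\eta)}$ combined with the trivial estimate $P(\delta, u) \leq \bbP[\|\bX\|_\op \leq u] = 2^{-n}\EE[Z_u]$ controls this contribution by $2^{n\eta/(1+\eta)} \EE[Z_u]$; using Theorem~\ref{thm:first_moment}, this is negligible compared to $(\EE[Z_u])^2$ precisely when $\tau > (1+\eta)\tau_1(u)$, the first term in~\eqref{eq:def_ttau}.

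The main obstacle is controlling $P(\delta, u)$ for $|\delta| > \delta_\eta$, where the entropy factor is too large and a genuine decay in $d$ must be extracted from the correlated-Gaussian structure. My plan is to condition on $\bX$ and bound $\bbP[\|\bY\|_\op \leq u \mid \bX]$ via a Chernoff-type exponential moment $\EE[\exp(-\lambda \mathrm{Tr}\, V(\bY)) \mid \bX]$, for a polynomial confining potential $V$ of degree at most $4$ --- matching the $\kappa^4$ scaling of~\eqref{eq:def_ttau}. Since $\bY \mid \bX$ is affine Gaussian, this conditional moment generating function is explicit and produces a rational function of $\delta$ and the low-order empirical moments of $\bX$, with denominator involving powers of $(1-\delta^2)$, in direct agreement with the structure of~\eqref{eq:def_ttau}. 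Integrating against the restricted law of $\bX$ on $\{\|\bX\|_\op \leq u\}$ --- for which Proposition~\ref{prop:ldp_Wop} and Theorem~\ref{thm:lsd_constrained_GOE} provide the large-deviation input --- should reproduce the second term of~\eqref{eq:def_ttau}, with the rational factors $(1 \pm \delta_\eta)$ tracing back to Gaussian decorrelation and the polynomial in $u$ arising from the confining potential $V$. A final optimization over $V$, $\eta > 0$ and $u \in [0, \kappa]$ then yields the threshold $\tau_2(\kappa)$.
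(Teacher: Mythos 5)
Your overall skeleton --- Paley--Zygmund on $Z_u$ for an auxiliary $u < \kappa$, upgraded to a high-probability statement via the margin-concentration result of \cite{altschuler2023zero} --- matches the paper's proof (Section~\ref{subsec:reduction_2nd_moment_ub}). However, your treatment of the second-moment ratio has the two overlap regimes \emph{reversed}, and the mistake is not cosmetic.

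You assign the crude bound to small overlaps: you claim $\binom{n}{(1-\delta)n/2} \leq 2^{n\eta/(1+\eta)}$ for $|\delta| \leq \delta_\eta$. But $\delta_\eta$ is defined by $H[(1+\delta_\eta)/2] = \tfrac{\eta}{1+\eta}\log 2$, and $H[(1+\delta)/2]$ is maximal (equal to $\log 2$) at $\delta = 0$ and decreasing in $|\delta|$. So the entropy inequality $\binom{n}{(1-\delta)n/2}\leq 2^{nH[(1+\delta)/2]} \leq 2^{n\eta/(1+\eta)}$ holds for $|\delta| \geq \delta_\eta$, not for $|\delta|\leq\delta_\eta$. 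Near $\delta=0$ the binomial is of order $2^n/\sqrt{n}$, so pairing it with the crude estimate $P(\delta,u)\leq\bbP[\|\bX\|_\op\leq u]=2^{-n}\EE[Z_u]$ gives
\begin{equation*}
2^n\!\!\!\sum_{|\delta|\le\delta_\eta}\!\!\!\binom{n}{(1-\delta)n/2}\,P(\delta,u) \;\le\; 2^n\,\EE[Z_u] \;=\; 2^{2n}\,\bbP[\|\bW\|_\op\le u],
\end{equation*}
which dominates $(\EE[Z_u])^2 = 2^{2n}\bbP[\|\bW\|_\op\le u]^2$ by a factor $\bbP[\|\bW\|_\op\le u]^{-1} = e^{\Theta(d^2)}$, so nothing is gained. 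The crude bound is exactly what the paper uses for the \emph{large}-overlap regime (condition $(ii)$ of Lemma~\ref{lemma:laplace_discrete}); the hard work is in the \emph{small}-overlap regime.

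For that small-overlap regime, the paper does not bound $P(\delta,u)$ pointwise in $\delta$ by any Chernoff-type estimate. Instead it performs a discrete Laplace expansion around $q=0$: writing $\EE[Z_\kappa^2]/\EE[Z_\kappa]^2 = 2^{-n}\sum_l\binom{n}{l}e^{nG_d(q_l)}$, and using $G_d(0)=G_d'(0)=0$, it suffices to show $\limsup_d\sup_{|q|\leq\delta}G_d''(q)\leq 1-\gamma$ (condition $(i)$ of Lemma~\ref{lemma:laplace_discrete}). This second derivative is controlled by the explicit decomposition in eq.~\eqref{eq:d2G_dq2_1}, which requires the limiting spectral density of a norm-constrained GOE matrix (Theorem~\ref{thm:lsd_constrained_GOE}) together with concentration of trace polynomials under the correlated conditional law $\langle\cdot\rangle_{q,\kappa}$; the latter follows from a log-Sobolev inequality proved via the Bakry--Emery criterion (Lemma~\ref{lemma:lsi_Pqkappa}). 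Your proposed conditional Chernoff bound with a degree-4 confining potential $V$ is a genuinely different mechanism and is unlikely to recover the specific coefficients of~\eqref{eq:def_ttau}: those are literally the evaluation of $\sup_{|q|\le\delta_\eta}G_d''(q)$ via the three terms $I_1,I_2,I_3$, and the exact rate function for $\bbP[\|\bW\|_\op\le u]$ involves the non-commutative entropy $\Sigma(\mu)$, which polynomial trace moments of bounded degree do not capture. If you want to pursue a direct bound on $P(\delta,u)$ in $\delta$ rather than the Taylor expansion, you would need far sharper large-deviation control on the joint law of $(\|\bW_1\|_\op,\|\bW_2\|_\op)$ for correlated GOE matrices --- precisely the difficulty the paper flags as open.
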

\noindent
In Section~\ref{sec:2nd_moment}, we prove Theorem~\ref{thm:second_moment}. 
We establish concentration of $Z_\kappa$ in Proposition~\ref{prop:2nd_moment}, 
upper bounding $\EE[Z_\kappa^2] / \EE[Z_\kappa]^2$ by some (large) constant as $d \to \infty$. 
Our proof relies on a discrete analog of Laplace's method, combined with showing concentration results (via a log-Sobolev inequality) for the distribution of correlated Gaussian matrices under spectral norm constraints.
We finally strengthen the result of Proposition~\ref{prop:2nd_moment} thanks to the general techniques on sharp transitions for integer feasibility problems 
developed in \cite{altschuler2023zero}, and deduce Theorem~\ref{thm:second_moment}.
Let us emphasize that having access to two-sided bounds on the first moment asymptotics (which are here sharp, and given in Theorem~\ref{thm:first_moment}) 
is crucial in order to develop our second moment analysis.

\subsubsection{Failure of the second moment method}

\noindent
Finally, under a technical assumption, we show that in average-case matrix discrepancy and
for some values of the parameters $(\tau, \kappa)$,
the number of solutions $Z_\kappa$ can be both large in expectation and have large variance, indicating the failure of the 
second moment method approach.
\begin{theorem}[Failure of the second moment method in part of the phase diagram]
    \label{thm:fail_second_moment}
    Assume that Hypothesis~\ref{hyp:control_Gsecond} holds.
    For $\kappa \in (0,2]$, let 
    \begin{equation}\label{eq:tau_f}
        \tau_{\f}(\kappa) \coloneqq \frac{1}{2} \left(\frac{\kappa^2}{4} - 1\right)^4.
    \end{equation}
    Then for $n, d \to \infty$ with $n/d^2 \to \tau$, if $\tau < \tau_\f(\kappa)$:
    \begin{equation}\label{eq:fail_second_moment}
        \liminf_{d \to \infty} \frac{1}{d^2} \log \frac{\EE[Z_\kappa^2]}{\EE[Z_\kappa]^2} > 0.
    \end{equation}
\end{theorem}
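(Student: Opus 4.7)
The plan is to carry out the standard overlap decomposition of $\EE[Z_\kappa^2]$ that underlies second-moment analyses in perceptron-type problems, and then to detect the failure of the second moment method through a Taylor expansion of the resulting rate function around the uncorrelated overlap. For two signings $\eps,\eps' \in \{\pm 1\}^n$, the pair of matrices $S := n^{-1/2}\sum_i \eps_i \bW_i$ and $S' := n^{-1/2}\sum_i \eps_i' \bW_i$ is jointly Gaussian; each marginal is $\GOE(d)$, and the joint law depends on $(\eps,\eps')$ only through the overlap $q := n^{-1}\sum_i \eps_i \eps_i' = 2\alpha - 1$, where $\alpha \in [0,1]$ is the fraction of agreeing coordinates. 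Writing $P_{n,d}(\kappa, q) := \bbP[\|S\|_\op \leq \kappa,\, \|S'\|_\op \leq \kappa]$, counting ordered pairs $(\eps,\eps')$ with a given overlap yields
\[
\EE[Z_\kappa^2] \;=\; 2^n \sum_{n_+=0}^{n} \binom{n}{n_+}\, P_{n,d}\bigl(\kappa,\, 2n_+/n - 1\bigr).
\]

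\textbf{Rate function and quadratic expansion.} I will combine Stirling's formula with Theorem~\ref{thm:first_moment}, which identifies $\lim (1/d^2)\log\bbP[\|\GOE(d)\|_\op \leq \kappa] = -\tau_1(\kappa)\log 2$, to recast
\[
\frac{1}{d^2}\log\frac{\EE[Z_\kappa^2]}{(\EE Z_\kappa)^2} \;=\; \frac{1}{d^2}\log \sum_{\alpha \in \{0,1/n,\ldots,1\}} \exp\bigl(d^2\, F(\alpha) + o(d^2)\bigr),
\]
where $F(\alpha) := \tau\bigl[H(\alpha) - \log 2\bigr] + 2\tau_1(\kappa)\log 2 + \lim (1/d^2)\log P_{n,d}(\kappa, 2\alpha-1)$ and $H(\alpha) := -\alpha\log\alpha - (1-\alpha)\log(1-\alpha)$ denotes the binary entropy. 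By independence of $S$ and $S'$ at $q=0$ one has $F(1/2) = 0$; by the symmetry $\eps' \leftrightarrow -\eps'$, $F$ regarded as a function of $q$ is even. The expansion $H((1+q)/2) - \log 2 = -q^2/2 + O(q^4)$ contributes the entropic term $-\tau q^2/2$, and Hypothesis~\ref{hyp:control_Gsecond} is then invoked to control the second-order behavior at $q=0$ of the joint operator-norm LDP rate. Granting this hypothesis gives a quadratic coefficient of magnitude $\frac{1}{4}(\kappa^2/4 - 1)^4$ in the probabilistic term, so that
\[
F\bigl((1+q)/2\bigr) \;=\; \Bigl[\frac{1}{4}\bigl(\kappa^2/4 - 1\bigr)^4 - \tau/2\Bigr]\, q^2 + O(q^4).
\]

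\textbf{Conclusion and main obstacle.} For $\tau < \tau_\f(\kappa) = \frac{1}{2}(\kappa^2/4-1)^4$, the quadratic coefficient above is strictly positive, so some small $q^* \neq 0$ satisfies $F((1+q^*)/2) > 0$. Picking a sequence of integers $n_+^*$ with $2n_+^*/n - 1 \to q^*$ as $d \to \infty$ and lower-bounding the full sum by this single term delivers $\liminf_{d\to\infty}(1/d^2)\log(\EE[Z_\kappa^2]/(\EE Z_\kappa)^2) \geq F((1+q^*)/2) > 0$, which is precisely eq.~\eqref{eq:fail_second_moment}. The main obstacle is the Hessian computation for the joint operator-norm LDP of correlated $\GOE(d)$ matrices at $q = 0$: this is a genuine two-matrix large deviation statement, strictly stronger than the marginal estimate of Proposition~\ref{prop:ldp_Wop} that powers Theorem~\ref{thm:first_moment}, and it appears inaccessible by the techniques used for the first moment alone. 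Hypothesis~\ref{hyp:control_Gsecond} is the technical input that supplies precisely this second-order information; once it is granted, the remainder reduces to routine Laplace asymptotics, together with the elementary verification that the discrete sum over $n_+$ is indeed lower-bounded by its value at the chosen non-zero overlap.
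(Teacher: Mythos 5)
Your overall architecture matches the paper's: decompose $\EE[Z_\kappa^2]$ over overlaps $q$, form the exponent $\Phi_d(q) = G_d(q) + H\bigl(\tfrac{1+q}{2}\bigr) - \log 2$ (where $G_d$ is the correlated-probability term of eq.~\eqref{eq:def_Gd}), note $\Phi_d(0)=0$ and $\Phi_d'(0)=0$ by evenness, show that for $\tau<\tau_\f(\kappa)$ the second derivative at $0$ is strictly positive, and then lower-bound the discrete sum by its value at one small nonzero $q_l$ to obtain eq.~\eqref{eq:fail_second_moment}. Your identification of the sign condition on the quadratic coefficient and of $\tau_\f(\kappa)$ as the resulting threshold is correct.

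There is, however, a genuine gap, and it concerns what Hypothesis~\ref{hyp:control_Gsecond} actually buys you. You write that the hypothesis ``supplies precisely this second-order information'' and that ``granting this hypothesis gives a quadratic coefficient of magnitude $\tfrac{1}{4}(\kappa^2/4-1)^4$ in the probabilistic term.'' That is a misattribution: the hypothesis is purely a uniform-continuity statement for $G_d''$ near $q=0$ and carries no information about the value $\lim_{d\to\infty}G_d''(0)$. The identity $\lim_{d\to\infty}G_d''(0)=\tau_\f(\kappa)/\tau$ is the content of the paper's Lemma~\ref{lemma:limit_Gsecond}, which is proved unconditionally and is the main technical substance of the theorem. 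It is not a ``two-matrix LDP'' statement — at $q=0$ the two matrices are independent $\bbP_\kappa$-distributed matrices, and the calculation reduces to single-matrix quantities: $G_d''(0) = \tfrac{d(d+1)}{2n} - \tfrac{d}{n}\EE[\Tr\bW^2] + \tfrac{d^2}{4n}\Var[\Tr(\bW\bW')]$, which one evaluates by (i) using Theorem~\ref{thm:lsd_constrained_GOE} to get $\EE[\Tr\bW^2]/d \to \int x^2\,\mu_\kappa^\star(\rd x) = \kappa^2(8-\kappa^2)/16$, (ii) decomposing $\Var[\Tr(\bW\bW')]$ via the Haar measure on $\mcO(d)$ and the invariances of the eigenvalue law, using the orthogonal Weingarten-type moment formulas and the concentration Lemma~\ref{lemma:conc_moments_Pqkappa} for $\Var[\Tr\bW]$, to show $\Var[\Tr(\bW\bW')] \to 2\bigl(\int x^2\,\mu_\kappa^\star\bigr)^2$, and (iii) combining these into eq.~\eqref{eq:limit_G20}. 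None of this appears in your proposal: you assert the coefficient rather than derive it.

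Separately, your write-up works with the pointwise limit $F(\alpha)$ and then picks $q^*$ achieving $F\bigl(\tfrac{1+q^*}{2}\bigr)>0$; to make this rigorous one must ensure the Taylor error is controlled uniformly in $d$, which is precisely where Hypothesis~\ref{hyp:control_Gsecond} enters (the paper instead works with the finite-$d$ $\Phi_d$ and uses the hypothesis to get $\inf_{|q|\leq\eps}\Phi_d''(q) \geq \delta$ for $d$ large). You do gesture at this (``the remainder reduces to routine Laplace asymptotics''), but it would be worth spelling out that the role of the hypothesis is exactly this uniformity — not the Hessian value itself, which is a separate and harder lemma you still need to prove.
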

\noindent
Theorem~\ref{thm:fail_second_moment} relies on a local analysis of what is referred to as a second moment potential.
The technical Hypothesis~\ref{hyp:control_Gsecond} pertains to controlling the second derivative of this potential uniformly with respect to $d$.
In Section~\ref{sec:fail_2nd_moment}, within the proof of Theorem~\ref{thm:fail_second_moment},
we explain why this assumption is plausible and outline approaches toward a potential proof, noting significant technical challenges that we defer to future work.
In Section~\ref{subsec:discussion}, we further discuss the implications of Theorem~\ref{thm:fail_second_moment} in relation to our other primary results.

\subsection{Discussion and consequences}\label{subsec:discussion}

In Figure~\ref{fig:phase_diagram}, we plot a sketch of the phase diagram of the problem, as established by Theorems~\ref{thm:first_moment},\ref{thm:second_moment} and \ref{thm:fail_second_moment}. 
\begin{figure}[!t]
    \centering
    \includegraphics[width=1.0\textwidth]{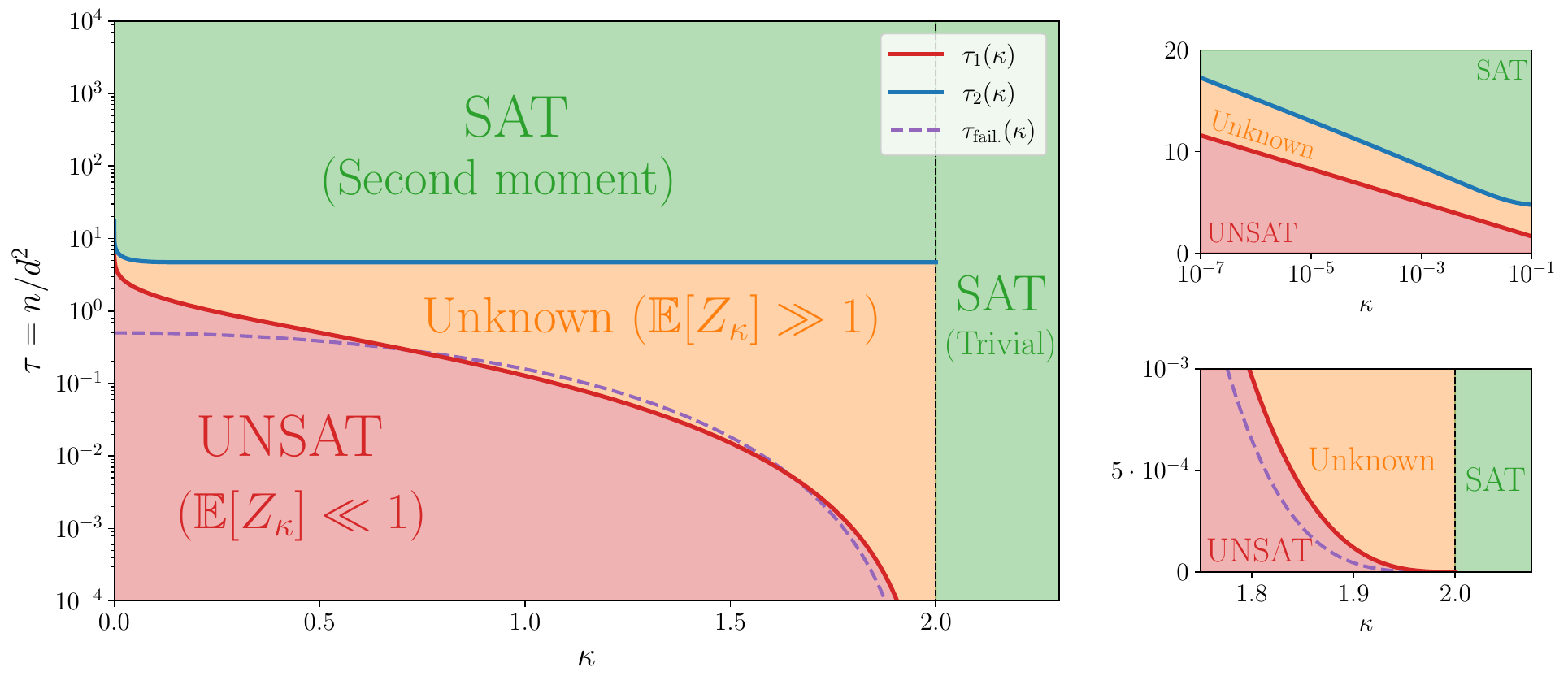}
    \caption{
        Sketch of the satisfiable (SAT) and unsatisfiable (UNSAT) regimes in average-case matrix discrepancy, 
        as proven by Theorems~\ref{thm:first_moment} and \ref{thm:second_moment}. 
        The border of the SAT region for $\kappa < 2$ is given by $\tau_2(\kappa)$, see Proposition~\ref{prop:tau2}. 
        Numerically, we find $\tau_2(\kappa \uparrow 2) \simeq 5.67$.
        For $\kappa >2$, the problem trivially admits a solution, see eq.~\eqref{eq:kappa_gtr_2}.
        The orange region is not characterized by our results, and remains open.
        The dotted purple line shows $\tau_\f(\kappa)$: according to Theorem~\ref{thm:fail_second_moment}, for $\tau_1(\kappa) < \tau < \tau_\f(\kappa)$ the number of solutions 
        has large expectation and variance, and the second moment method fails (see also Fig.~\ref{fig:fail_second_moment}).
        The right plots show the limits $\kappa \downarrow 0$ (top) and $\kappa \uparrow 2$ (bottom).
        We emphasize that $\tau_1(\kappa), \tau_2(\kappa) \to +\infty$ as $\kappa \downarrow 0$.
    \label{fig:phase_diagram}}
\end{figure}
Our results characterize a large part of the $(\kappa, \tau)$ phase diagram: 
we discuss in the following some important consequences, 
and highlight open problems and research directions arising from our analysis.

\myskip
\textbf{Balancing $\Theta(d^2)$ random matrices -- }
An immediate consequence of Theorem~\ref{thm:second_moment} is the following corollary.
\begin{corollary}\label{cor:tau_infty}
    For any $\tau > 0$ large enough\footnotemark%
    , if $n,d \to \infty$ with $n / d^2 \to \tau$, and letting $\bW_1, \cdots, \bW_n \iid \GOE(d)$,
    then (with high probability) there exists $\eps \in \{\pm 1\}^n$ with margin 
    \begin{equation*}
        \frac{1}{\sqrt{n}} \left\|\sum_{i=1}^n \eps_i \bW_i\right\|_\op \leq \kappa_c(\tau) < 2.
    \end{equation*}
    Furthermore, $\kappa_c(\tau) \to 0$ as $\tau \to \infty$.
\end{corollary}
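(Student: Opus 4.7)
The plan is to deduce the corollary directly from Theorem~\ref{thm:second_moment} via a continuity argument on the threshold function $\tau_2(\kappa)$. The key preliminary observation is that $\tau_2(\kappa) < \infty$ for every $\kappa \in (0, 2]$. Indeed, by inspecting eq.~\eqref{eq:def_ttau}, for any fixed $\eta > 0$ both terms in the maximum defining $\ttau(\eta, \kappa)$ are finite: the factor $\tau_1(\kappa)$ is finite for $\kappa > 0$ by eq.~\eqref{eq:tau_1st_moment}, while the second term is a polynomial in $\kappa$ whose coefficients depend rationally on $\delta_\eta \in (0, 1)$, hence are bounded. Taking $u = \kappa$ in eq.~\eqref{eq:def_tau2}, we deduce $\tau_2(\kappa) \leq \min_{\eta > 0} \ttau(\eta, \kappa) < \infty$. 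In particular $\tau_2(2) < \infty$ (numerically $\simeq 5.67$, as indicated in Figure~\ref{fig:phase_diagram}).

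For the first assertion, fix any $\tau > \tau_2(2)$ and set $\kappa_c(\tau) \coloneqq \inf\{\kappa \in (0, 2] : \tau_2(\kappa) < \tau\}$. By the continuity and non-increasing monotonicity of $\tau_2$ established in Proposition~\ref{prop:tau2}, this set is a non-empty sub-interval of $(0, 2]$ containing an open neighbourhood of $2$, so $\kappa_c(\tau) < 2$, and for any $\kappa' \in (\kappa_c(\tau), 2)$ one still has $\tau_2(\kappa') < \tau$. Applying Theorem~\ref{thm:second_moment} with such a $\kappa'$ (arbitrarily close to $\kappa_c(\tau)$) yields, with high probability as $d \to \infty$, a signing $\eps \in \{\pm 1\}^n$ with margin at most $\kappa' < 2$, which gives the first claim (absorbing the slack into the definition of $\kappa_c(\tau)$ if desired).

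For the second assertion, fix any target margin $\kappa_0 \in (0, 2]$. Since $\tau_2(\kappa_0) < \infty$ by the first step, for every $\tau > \tau_2(\kappa_0)$ one has $\tau_2(\kappa_0) < \tau$, hence $\kappa_c(\tau) \leq \kappa_0$. Consequently $\limsup_{\tau \to \infty} \kappa_c(\tau) \leq \kappa_0$, and letting $\kappa_0 \downarrow 0$ yields $\kappa_c(\tau) \to 0$ as $\tau \to \infty$. There is no genuine obstacle in the argument: once Proposition~\ref{prop:tau2} and Theorem~\ref{thm:second_moment} are in hand, the corollary reduces to elementary manipulations of a one-variable continuous non-increasing function whose finiteness at every $\kappa > 0$ is read off directly from eq.~\eqref{eq:def_ttau}. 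The only point worth a careful check is this finiteness statement, because it fails in the limit $\kappa \downarrow 0$ (where $\tau_1(\kappa) \to +\infty$) and one must confirm that for each fixed $\kappa > 0$ some admissible $\eta$ keeps $\ttau(\eta, \kappa)$ bounded.
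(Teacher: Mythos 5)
Your argument is correct and is essentially the argument the paper has in mind (the paper simply labels the corollary an "immediate consequence" of Theorem~\ref{thm:second_moment} without spelling out the details). The key observations you make — finiteness of $\tau_2(\kappa)$ for each fixed $\kappa \in (0,2]$, which follows from eq.~\eqref{eq:def_tau2} by fixing any $u = \kappa$ and $\eta > 0$, together with the continuity and monotonicity from Proposition~\ref{prop:tau2} — are exactly what is needed. The one cosmetic wrinkle is that $\kappa_c(\tau)$ as you define it (an infimum) need not itself satisfy $\tau_2(\kappa_c(\tau)) < \tau$, so Theorem~\ref{thm:second_moment} strictly gives the conclusion only for $\kappa'$ slightly above $\kappa_c(\tau)$; you note this and it is easily repaired by relabeling, e.g.\ replacing $\kappa_c(\tau)$ by $\min\{2 - \delta, \kappa_c(\tau) + \delta\}$ for a $\delta = \delta(\tau) \downarrow 0$ chosen slowly enough, which preserves both $\kappa_c(\tau) < 2$ and $\kappa_c(\tau) \to 0$.
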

\footnotetext{Numerically, we find $\tau \geq 6$ is enough, see Fig.~\ref{fig:phase_diagram}.}
\noindent
Corollary~\ref{cor:tau_infty} establishes that for $n$ large enough but still in the scale $n = \Theta(d^2)$, one can find solutions 
with margin arbitrarily close to $0$.
As far as we know, our result is the first proof that a number $n = \Theta(d^2)$ of Gaussian random matrices can be balanced in a way to make 
their spectrum \emph{macroscopically shrink} compared to the typical spectrum of a Gaussian matrix: it solves an open problem posed by \cite{kunisky2023online}.

\myskip
\textbf{Tightness of the second moment analysis --}
Similarly to \cite{aubin2019storage} (and subsequent works) for the symmetric binary perceptron (SBP), we use a second moment approach to characterize the feasibility of this problem.  
However, while this method gives a tight threshold for the SBP in Theorem~\ref{thm:trans_SBP} (and thus leaves no ``Unknown'' region in the SBP counterpart to Figure~\ref{fig:phase_diagram}),
the situation in average-case matrix discrepancy is quite different.

\myskip
First, the threshold $\tau_2(\kappa)$ of eq.~\eqref{eq:def_tau2} is likely not optimal, 
as the upper bounds on $\EE[Z_\kappa^2]$ proven in Section~\ref{sec:2nd_moment} are not expected to be tight.
For this reason, the second moment ratio $\EE[Z_\kappa^2]/\EE[Z_\kappa]^2$ might still be bounded by a constant (cf.\ Proposition~\ref{prop:2nd_moment}) even for some values $\tau \leq \tau_2(\kappa)$. 
For instance, one might conjecture that this includes values of $\tau$ arbitrarily close to $0$ when $\kappa$ approaches $2$, which is not captured by our current bounds.
Improving the estimates we obtain in Section~\ref{sec:2nd_moment} to obtain a sharp study of the range of parameters $(\tau, \kappa)$ with bounded second moment ratio is significantly more complex than in the SBP setting:
it requires a precise understanding of the large deviation properties of the law of $(\|\bW_1\|_\op, \|\bW_2\|_\op)$, where $\bW_1, \bW_2$ are two correlated $\GOE(d)$ matrices, both conditioned on having small spectral norm (see eq.~\eqref{eq:def_Gd}). 
A rate function for this law can be obtained in a variational form (see e.g.\ Theorem~3.3 of \cite{guionnet2004first}), however a numerical analysis of this variational formula 
is a very challenging problem, which we leave as a future direction.
\begin{openquestion}[Sharp second moment]\label{op:sharp_2nd_mom}
    Obtain the sharp limit of $(1/d^2) \log \EE[Z_\kappa^2]$, the exponential scale of the second moment.
    This should likely rely on computing numerically $(i)$ the large-$d$ limit of the large deviations rate function of eq.~\eqref{eq:def_Gd}, 
    and $(ii)$ the asymptotic spectral density of two \emph{correlated} $\GOE(d)$ matrices, conditioned to both have spectral norm at most $\kappa$.
\end{openquestion}

\myskip
Furthermore, and very interestingly, we show in Theorem~\ref{thm:fail_second_moment} that 
(assuming Hypothesis~\ref{hyp:control_Gsecond}) the second moment method does not yield a sharp satisfiability threshold in average-case matrix discrepancy. 
Indeed, there exists a range of $\kappa \in (0,2)$ such that 
$\tau_1(\kappa) < \tau_{c}(\kappa)$, so that if $\tau \in (\tau_1(\kappa), \tau_c(\kappa))$, 
then both $\lim (1/d^2) \log \EE[Z_\kappa] > 0$ and $\liminf (1/d^2) \log \EE[Z_\kappa^2]/\EE[Z_\kappa]^2 > 0$.
For such values of $(\tau, \kappa)$, the variance of $Z_\kappa$ is thus exponentially large (in $d^2$), and an approach based solely on the second moment method 
will fail at characterizing the feasibility of the problem.
Numerically, we find  that $\tau_1(\kappa) < \tau_c(\kappa)$ in the range $0.718 \lesssim \kappa \lesssim 1.652$. We summarize these findings in Fig.~\ref{fig:fail_second_moment}.
\begin{figure}[!t]
    \centering
    \includegraphics[width=0.75\textwidth]{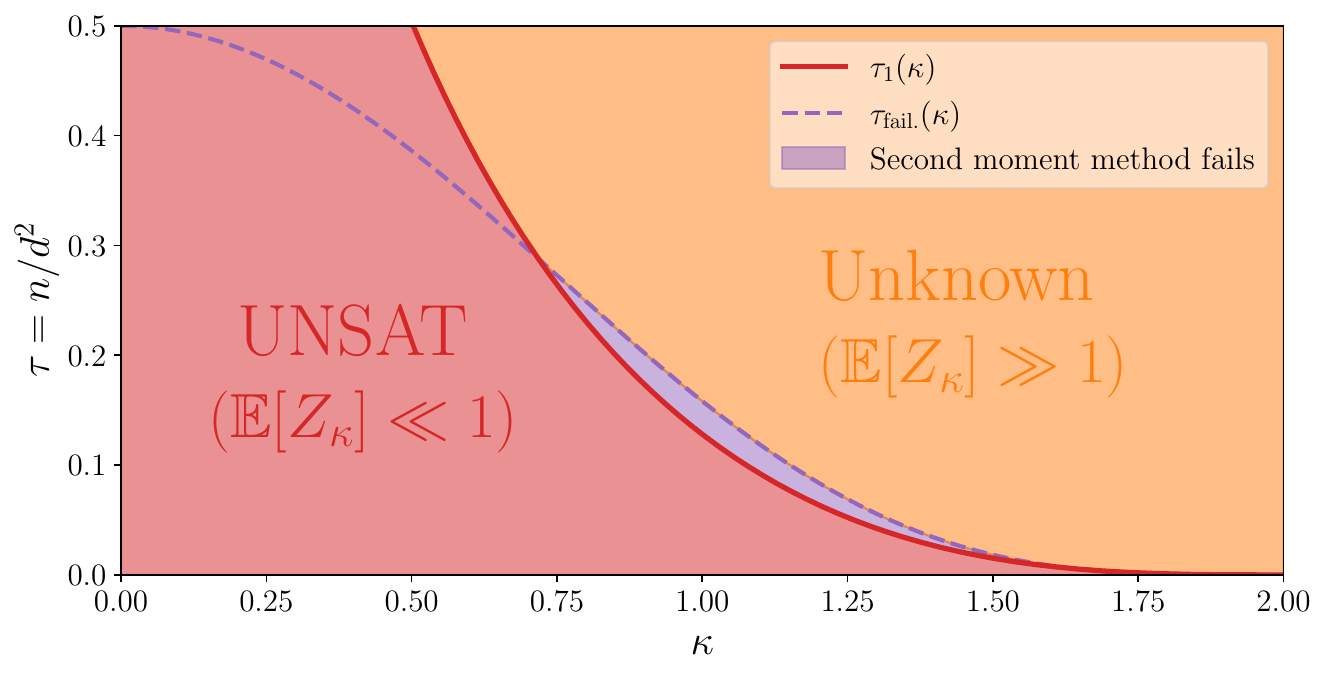}
    \caption{
        Illustration of Theorem~\ref{thm:fail_second_moment}. There exists a region of parameters (in purple) $\tau \in (\tau_1(\kappa), \tau_c(\kappa))$
        for which the number of solutions $Z_\kappa$ to average-case matrix discrepancy satisfies both $\EE[Z_\kappa] \gg 1$ and $\EE[Z_\kappa^2] \gg (\EE[Z_\kappa])^2$,
        so that the second moment method fails at characterizing the feasibility of the problem.
    \label{fig:fail_second_moment}}
\end{figure}

\myskip
We stress that the result of Theorem~\ref{thm:fail_second_moment} is in stark contrast with the symmetric binary perceptron, in 
which the second moment method succeeds in the entirety of the phase diagram (Theorem~\ref{thm:trans_SBP}).
In particular, this unveils a potentially richer picture for the geometry of the solution space than in the SBP.
Finally, we emphasize that the failure of the second moment method might extend to other parts of the ``Unknown'' region of the phase diagram
(beyond the purple region of Fig.~\ref{fig:fail_second_moment}) 
which are not captured by the local analysis used in Theorem~\ref{thm:fail_second_moment}: we refer to Section~\ref{sec:fail_2nd_moment} for details.

\myskip 
\textbf{Sharp threshold sequence --}
Theorem~7 of \cite{altschuler2023zero} (see also Lemma~\ref{lemma:dylan})
directly implies the existence of a sharp threshold sequence for average-case matrix discrepancy. 
Using Theorems~\ref{thm:first_moment} and \ref{thm:second_moment}, we can locate it
in the closure of the ``Unknown region'' of Fig.~\ref{fig:phase_diagram}.
\begin{corollary}[Sharp threshold sequence]\label{cor:sharp_threshold}
    \noindent
    Let $n, d \geq 1$, and assume $n/d^2 \to \tau > 0$ as $d\to \infty$. There exists a sequence 
    $\kappa_c(d,\tau)$ such that: 
    \begin{itemize}
        \item[$(i)$] For any $\eps > 0$, $\bbP[Z_{\kappa_c - \eps} \geq 1] = \smallO_d(1)$, while $\bbP[Z_{\kappa_c + \eps} \geq 1] = 1 - \smallO_d(1)$.
        \item[$(ii)$]  $\tau_1(\kappa_c) \leq \tau \leq \tau_2(\kappa_c)$, i.e.\ $(\kappa_c,\tau)$ is in the closure of the ``Unknown'' region of Fig.~\ref{fig:phase_diagram}.
    \end{itemize}
\end{corollary}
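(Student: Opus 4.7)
The plan is to combine the abstract sharp-threshold machinery of \cite{altschuler2023zero} (namely Lemma~\ref{lemma:dylan}) with the quantitative first- and second-moment bounds of Theorems~\ref{thm:first_moment} and~\ref{thm:second_moment}. The only structural fact I would need beforehand is the almost-sure monotonicity of $\kappa \mapsto Z_\kappa$: enlarging $\kappa$ only enlarges the set of admissible signings, so $\{Z_\kappa \geq 1\}$ is an increasing event in $\kappa$. This is the hypothesis under which Lemma~\ref{lemma:dylan} produces a sharp threshold sequence $\kappa_c(d,\tau)$ satisfying statement~(i).

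For part~(ii), I would argue by contradiction, separately for each of the two bounds, using continuity of $\tau_1$ (immediate from the explicit formula~\eqref{eq:tau_1st_moment}) and of $\tau_2$ (Proposition~\ref{prop:tau2}), together with the fact that both functions are non-increasing in $\kappa$. Let $\kappa_\star$ be any subsequential limit of $\kappa_c(d,\tau)$ along some $d_k \to \infty$ (the sequence lies in the compact set $(0,2]$, so such limit points exist). Suppose $\tau > \tau_2(\kappa_\star)$. Pick $\eps > 0$ small enough that $\tau > \tau_2(\kappa_\star - 2\eps)$, which is possible by continuity. Apply Theorem~\ref{thm:second_moment} with the \emph{fixed} margin $\kappa' := \kappa_\star - 2\eps$: one gets $\bbP[Z_{\kappa'} \geq 1] \to 1$ as $d \to \infty$. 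Since $\kappa_c(d_k) \to \kappa_\star$, for $k$ large one has $\kappa_c(d_k) - \eps > \kappa'$, and monotonicity of $Z_\kappa$ gives
\begin{equation*}
  \bbP[Z_{\kappa_c(d_k) - \eps} \geq 1] \;\geq\; \bbP[Z_{\kappa'} \geq 1] \;\longrightarrow\; 1,
\end{equation*}
contradicting the $\smallO_d(1)$ half of statement~(i) applied to the subsequence. Symmetrically, if $\tau < \tau_1(\kappa_\star)$, pick $\eps > 0$ with $\tau < \tau_1(\kappa_\star + 2\eps)$. Theorem~\ref{thm:first_moment} then gives $\EE[Z_{\kappa_\star + 2\eps}]$ exponentially small in $d^2$, so by Markov $\bbP[Z_{\kappa_\star + 2\eps} \geq 1] \to 0$; along the subsequence, monotonicity yields $\bbP[Z_{\kappa_c(d_k) + \eps} \geq 1] \leq \bbP[Z_{\kappa_\star + 2\eps} \geq 1] \to 0$, contradicting the $1 - \smallO_d(1)$ half of~(i).

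There is no real analytic obstacle here: the whole argument is a soft continuity-and-contradiction exercise on top of Theorems~\ref{thm:first_moment}, \ref{thm:second_moment} and the abstract threshold result. The only mildly delicate point is bookkeeping the fact that $\kappa_c$ is a sequence in $d$ rather than a single number, which is handled by working with subsequential limits and exploiting the continuity/monotonicity of $\tau_1, \tau_2$ together with monotonicity of $Z_\kappa$ in $\kappa$. Consequently, every limit point $\kappa_\star$ of $\kappa_c(d,\tau)$ satisfies $\tau_1(\kappa_\star) \leq \tau \leq \tau_2(\kappa_\star)$, which is the precise meaning of~(ii) and places $(\kappa_c,\tau)$ in the closure of the ``Unknown'' region of Fig.~\ref{fig:phase_diagram}.
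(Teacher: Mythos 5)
Your proposal is correct and follows essentially the same route the paper indicates: Lemma~\ref{lemma:dylan} (Theorem~7 of \cite{altschuler2023zero}) gives the concentration of $\disc(\bW_1,\dots,\bW_n)/\sqrt{n}$ around its mean, from which one takes $\kappa_c(d,\tau) := \EE[\disc]/\sqrt{n}$ and deduces~(i) by Chebyshev together with the monotonicity of $\{Z_\kappa \geq 1\}$ in $\kappa$; then~(ii) is located by playing off Theorems~\ref{thm:first_moment} and~\ref{thm:second_moment} against~(i) through a continuity argument on $\tau_1, \tau_2$. Your formalization of~(ii) via subsequential limits and a two-sided contradiction is the natural way to make precise what the paper only sketches.

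One small slip worth fixing: you claim "the sequence lies in the compact set $(0,2]$, so such limit points exist", but $(0,2]$ is not compact. The correct justification is that $\kappa_c(d) = \EE[\disc]/\sqrt{n} \leq 2$ (since $\frac{1}{\sqrt{n}}\sum_i \bW_i \sim \GOE(d)$ and $\EE\|\bY\|_\op \leq 2$), and $\kappa_c(d)$ is eventually bounded away from zero by Theorem~\ref{thm:first_moment}: for any $\kappa_0 > 0$ with $\tau_1(\kappa_0) > \tau$ one has $\bbP[X_d \leq \kappa_0] \to 0$, which via Chebyshev and concentration of $X_d = \disc/\sqrt{n}$ forces $\liminf \kappa_c(d) \geq \kappa_0$. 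Hence $\kappa_c(d)$ is eventually confined to a compact sub-interval of $(0,2]$, and the extraction of limit points is legitimate. Alternatively, extract limit points in $[0,2]$ and observe that $\kappa_\star = 0$ is ruled out by the same first-moment argument, since $\tau_1(\kappa) \to \infty$ as $\kappa \downarrow 0$.
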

\noindent
The existence of a sharp threshold sequence has been established recently in a variety of other perceptron-like problems, see for instance \cite{talagrand1999self,talagrand2011mean,xu2021sharp,nakajima2023sharp,altschuler2023zero}.
Corollary~\ref{cor:sharp_threshold} shows the existence of a sharp threshold $\kappa_c$ depending on $d$, and bounds it in an interval of size $\Theta(1)$. 
We further conjecture $\kappa_c(d,\tau)$ to converge to a well-defined limit, as we plan to discuss in a future work.
\begin{openquestion}[Sharp SAT/UNSAT transition]\label{op:quenched_limit}
    Obtain the sharp limit of $(1/d^2) \EE \log Z_\kappa$, and from it a sharp SAT/UNSAT threshold, closing the gap in Fig.~\ref{fig:phase_diagram}.
    In particular, compare it with the success of the second moment method (see Open Problem~\ref{op:sharp_2nd_mom}).
\end{openquestion}

\myskip
\textbf{Structure of the solution space --}
Beyond satisfiability, a natural question
concerns
the geometric structure 
of the space of solutions $\{\eps \in \{\pm 1\}^n \, : \, \|\sum_i \eps_i \bW_i\|_\op~\leq~\kappa \sqrt{n}\}$.
In the symmetric binary perceptron, the geometric structure of the solution space was investigated by~\cite{aubin2019storage,perkins2021frozen,abbe2022proof},
and was shown to exhibit a ``frozen-1RSB'' structure: that is, typically, solutions are isolated and far apart.
\begin{openquestion}[Structure of the set of solutions]\label{op:freezing}
    Elucidate whether the solution space (in the SAT region of Fig.~\ref{fig:phase_diagram} for $\kappa < 2$) exhibits the ``frozen-1RSB'' property.
\end{openquestion}
\noindent
In the SBP it was the second moment analysis that suggested the existence of freezing~\citep{aubin2019storage}.
However in the present context two 
difficulties arise: $(i)$ this argument usually relies on establishing  a contiguity property with a ``planted'' version of the model, 
and $(ii)$ even if contiguity holds, the second moment upper bound developed in Section~\ref{sec:2nd_moment} 
is unfortunately not sharp enough to determine whether freezing occurs, 
even withing the SAT region of Fig.~\ref{fig:phase_diagram}.
We discuss points $(i)$ and $(ii)$ in further details in Appendix~\ref{sec:appendix_freezing}.
Going beyond, the failure of the second moment method (discussed above)
may also signal a different geometric structure than in the SBP, at least in parts of the phase diagram.

\myskip 
\textbf{Algorithmic discrepancy --}
The past decade has seen a surge of interest in \emph{algorithmic discrepancy}, that is the design of efficient algorithms to produce signings 
$\eps \in \{\pm 1\}^n$ minimizing a discrepancy objective such as eq.~\eqref{eq:def_disc}. 
In the classical context of vector discrepancy, this line of work was initiated by \cite{bansal2010constructive}, and we refer to \cite{bansal2023resolving,kunisky2023online} for a more detailed description of the 
literature that followed.
For the problem of average-case matrix discrepancy, \cite{kunisky2023online} analyze an online algorithm, introduced originally by~\cite{zouzias2012matrix}, and show that is able to achieve a discrepancy
$\|\sum_i \eps_i \bW_i\|_\op \lesssim d \log(n+d)$ for a large class of random matrices $\bW_i$, including the $\GOE$ distribution.
In the regime $n = \Theta(d^2)$, this bound unfortunately falls short of obtaining a discrepancy lower than $2 \sqrt{n}$. 
\begin{openquestion}[Efficient algorithms]\label{op:algorithms}
    Is there a polynomial-time algorithm which, in the regime $n = \Theta(d^2)$ and with high probability, outputs $\eps \in \{\pm 1\}^n$ such that
   $\|\sum_{i} \eps_i \bW_i\|_\op \leq \kappa \sqrt{n}$
    for some $\kappa < 2$?
\end{openquestion}
\noindent
For the symmetric binary perceptron, which is the vector analog to our problem, it was recently shown that the phase diagram presents a large computational-to-statistical gap
in which low-discrepancy solutions exist, while large classes of polynomial-time algorithms can not find them, and that this was related to the geometric structure of the solution space~\citep{bansal2020line,gamarnik2022algorithms}.

\subsection*{Acknowledgements}
The author wants to thank Dylan Altschuler for discussions and a careful reading of a first version of this manuscript, 
Tim Kunisky and Afonso S.\ Bandeira for insightful discussions about the early stages of this work, 
and anonymous reviewers for helpful comments and suggestions.

\section{Sharp asymptotics of the first moment}\label{sec:1st_moment}
We carry out in this section the proof of Theorem~\ref{thm:first_moment}.
A direct computation from eq.~\eqref{eq:def_Zkappa} using the linearity of expectation yields:
\begin{equation}
    \label{eq:E_Zr}
    \EE Z_\kappa = \sum_{\eps \in \{\pm 1\}^n} \bbP\left[\left\|\sum_{i=1}^n \eps_i \bW_i\right\|_{\op} \leq \kappa \sqrt{n}\right] = 2^n \, \bbP[\|\bW\|_\op \leq \kappa],
\end{equation}
where $\bW \sim \GOE(d)$.
The main result we prove is the following.
\begin{proposition}[Left large deviations for the operator norm of a $\GOE(d)$ matrix]
    \label{prop:ldp_Wop}
    Let $\bW~\sim~\GOE(d)$. For any $\kappa > 0$:
    \begin{equation}
        \label{eq:ldp_Wop}
        \lim_{d \to \infty} \frac{1}{d^2} \log \bbP[\|\bW\|_\op \leq \kappa]
        = 
        \begin{dcases}
            \frac{\kappa^4}{128} - \frac{\kappa^2}{8} + \frac{1}{2} \log \frac{\kappa}{2} + \frac{3}{8} &\textrm{ if } \kappa \leq 2, \\
            0 &\textrm{ otherwise.} 
        \end{dcases}
    \end{equation}
\end{proposition}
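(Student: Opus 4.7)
The plan starts from the joint density of the eigenvalues $(\lambda_i)_{i=1}^d$ of $\bW \sim \GOE(d)$, proportional to $\prod_{i<j}|\lambda_i-\lambda_j|\exp(-\tfrac{d}{4}\sum_i \lambda_i^2)$. Rewriting the integrand via the empirical measure $\mu_d := d^{-1}\sum_i \delta_{\lambda_i}$, it takes the form $\exp(-d^2 E(\mu_d) + o(d^2))$ with $E(\mu) := \tfrac{1}{4}\int x^2\,d\mu - \tfrac{1}{2}\Sigma(\mu)$. Since $\{\|\bW\|_\op \leq \kappa\} = \{\supp \mu_d \subseteq [-\kappa,\kappa]\}$, the classical large deviation principle for the empirical measure of $\beta$-ensembles~\citep{arous1997large,anderson2010introduction} yields
\begin{equation*}
\lim_{d\to\infty}\frac{1}{d^2}\log\bbP[\|\bW\|_\op \leq \kappa] \;=\; -\Bigl[\inf_{\mu\in\mcM_1^+([-\kappa,\kappa])} E(\mu) \;-\; \inf_{\mu\in\mcM_1^+(\bbR)} E(\mu)\Bigr].
\end{equation*}
For $\kappa > 2$, the unconstrained minimizer $\rho_\sci$ already satisfies $\supp\rho_\sci \subseteq [-\kappa,\kappa]$, so the bracket vanishes, consistently with Theorem~\ref{thm:wigner}. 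For $\kappa \leq 2$, I proceed to solve the constrained problem explicitly.

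\textbf{Solving the variational problem via Tricomi.} The Euler-Lagrange conditions for the minimizer $\rho_\kappa^\ast$ read $x^2/4 - \int \log|x-y|\,\rho_\kappa^\ast(y)\,dy = C_\kappa$ on $\supp(\rho_\kappa^\ast)$. Differentiating yields the singular integral equation
\begin{equation*}
\mathrm{PV}\int_{-\kappa}^{\kappa}\frac{\rho_\kappa^\ast(y)}{x-y}\,dy \;=\; \frac{x}{2}, \qquad x \in (-\kappa,\kappa),
\end{equation*}
to which I would apply Tricomi's inversion formula~\citep{tricomi1985integral} for the finite Hilbert transform on $[-\kappa,\kappa]$. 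Using the standard identities $\mathrm{PV}\int \tfrac{dy}{(x-y)\sqrt{\kappa^2-y^2}} = 0$ and $\mathrm{PV}\int \tfrac{\sqrt{\kappa^2-y^2}}{x-y}\,dy = \pi x$ (for $x$ in the interior), and fixing the free constant by the normalization $\int \rho_\kappa^\ast = 1$, one obtains the closed form
\begin{equation*}
\rho_\kappa^\ast(x) \;=\; \frac{1 + \kappa^2/4 - x^2/2}{\pi\sqrt{\kappa^2 - x^2}}, \qquad x \in [-\kappa,\kappa],
\end{equation*}
which is nonnegative on its support precisely for $\kappa \leq 2$ and reduces to $\rho_\sci$ at $\kappa = 2$. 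I would then verify the EL inequality outside $[-\kappa,\kappa]$ to confirm global minimality.

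\textbf{Evaluating the energy and concluding.} Integrating the EL equation against $\rho_\kappa^\ast$ gives the Lagrangian shortcut $E(\rho_\kappa^\ast) = \tfrac{1}{8}\int x^2\,\rho_\kappa^\ast(x)\,dx + C_\kappa/2$. The substitution $x = \kappa\sin\theta$ reduces the moments $\int x^{2k}\rho_\kappa^\ast$ to elementary trigonometric integrals, giving $\int x^2\,\rho_\kappa^\ast = \kappa^2/2 - \kappa^4/16$. Evaluating the EL equation at $x=0$ yields $C_\kappa = -\int \log|y|\,\rho_\kappa^\ast(y)\,dy$, which under the same substitution reduces to the classical values $\int_0^{\pi/2} \log\sin\theta\,d\theta = -\tfrac{\pi}{2}\log 2$ and $\int_0^{\pi/2} \sin^2\theta \log\sin\theta\,d\theta = \tfrac{\pi}{8} - \tfrac{\pi}{4}\log 2$, producing $C_\kappa = -\log(\kappa/2) + \kappa^2/8$. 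Specializing to $\kappa = 2$ gives $E(\rho_\sci) = 3/8$, and the difference $-[E(\rho_\kappa^\ast) - E(\rho_\sci)]$ reproduces exactly the right-hand side of~\eqref{eq:ldp_Wop}.

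\textbf{Main obstacle.} The most substantive step is the explicit inversion of the singular integral equation in the second paragraph, together with the verification that the candidate $\rho_\kappa^\ast$ is nonnegative throughout $[-\kappa,\kappa]$ for $\kappa \leq 2$ and is the \emph{global} minimizer of $E$ on $\mcM_1^+([-\kappa,\kappa])$ (which requires checking the EL inequality off the support). Once $\rho_\kappa^\ast$ is in hand, the closing computations are routine trigonometric integrals.
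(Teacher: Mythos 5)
Your overall strategy — reducing to a constrained equilibrium problem, solving it via potential theory/Tricomi, then evaluating the energy — is essentially the same as the paper's, and your explicit computations (the density $\rho_\kappa^\ast$, the moments $\int x^2 \rho_\kappa^\ast = \kappa^2/2 - \kappa^4/16$, and the Robin constant $C_\kappa = -\log(\kappa/2) + \kappa^2/8$) agree with the paper's eqs.~\eqref{eq:def_rhokappa}, \eqref{eq:int_mukappa_x2} and the constant $C$ appearing in the proof of Proposition~\ref{prop:mukappa_value}. The paper arrives at the same density by guessing via Tricomi and verifying with a residue computation, and you propose to invoke Tricomi's inversion directly plus an EL-inequality check for global optimality; either is fine, and the paper's Lemma~\ref{lemma:log_pot_mukappa} (or the positivity of $-\Sigma$ from Lemma~\ref{lemma:nonc_entropy_pos}) is what makes the optimality verification rigorous.

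The genuine gap is in the first step. You assert that "the classical large deviation principle for the empirical measure of $\beta$-ensembles~\citep{arous1997large,anderson2010introduction} yields" the variational formula
\begin{equation*}
\lim_{d\to\infty}\frac{1}{d^2}\log\bbP[\|\bW\|_\op \leq \kappa] = -\Bigl[\inf_{\mcM_1^+([-\kappa,\kappa])} E - \inf_{\mcM_1^+(\bbR)} E\Bigr].
\end{equation*}
The upper bound does follow directly, because $\{\mu : \mu([-\kappa,\kappa]) = 1\}$ is weakly closed. But the matching lower bound does \emph{not} follow from the Ben Arous--Guionnet LDP, because that set has empty interior in the weak topology: any $\mu$ supported in $[-\kappa,\kappa]$ is a weak limit of measures with mass escaping $[-\kappa,\kappa]$, so the LDP lower bound over open sets gives nothing. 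The paper handles this in two ways: its main proof invokes the LDP for the \emph{constrained} ensemble (Proposition~\ref{prop:ldp_empirical_measure_conditioned}, following \cite{fan2015convergence}), obtaining the asymptotics of both the constrained and unconstrained partition functions directly from item~$(iii)$ there; and Appendix~\ref{sec:appendix_ldp} gives a self-contained lower bound by restricting the eigenvalue integral to small neighborhoods of the quantiles of $\mu^\star_{\kappa-\delta}$, a non-trivial estimate spanning eqs.~\eqref{eq:lb_Wop_1}--\eqref{eq:lb_Wop_var_principle}. To make your first paragraph rigorous you need one of these two arguments (or an equivalent one); as written, it silently uses the LDP lower bound on a set to which it does not apply.
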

\noindent
Using Proposition~\ref{prop:ldp_Wop} in eq.~\eqref{eq:E_Zr} (recall that $n/d^2 \to \tau$) yields eq.~\eqref{eq:limit_logEZ}.
The second result of Theorem~\ref{thm:first_moment} is a direct consequence of Markov's inequality combined with eq.~\eqref{eq:limit_logEZ}.
Notice that Markov's inequality even shows that $\bbP[Z_\kappa > 0]$ goes to zero exponentially fast in $d^2$ for $\tau < \tau_1(\kappa)$.

\myskip 
In the remainder of Section~\ref{sec:1st_moment}, we focus on proving Proposition~\ref{prop:ldp_Wop}.
We note first that for $\kappa > 2$ we have $\log \bbP[\|\bW\|_\op \leq \kappa] = \smallO_d(1)$ as a consequence of Theorem~\ref{thm:wigner}. 
We thus focus on the case $\kappa \in (0,2]$ in what follows.

\myskip 
\textbf{Sketch of proof and important related work --}
    The proof of Proposition~\ref{prop:ldp_Wop} builds upon a large deviation principle (with respect to the weak topology) for the empirical spectral measure of a matrix $\bW$ drawn from $\GOE(d)$, \emph{conditioned} on $\|\bW\|_\op \leq \kappa$.
We study the conditioned law directly, since it is a so-called $\beta$-ensemble (albeit with a singular potential enforcing the spectral norm constraint), 
so that the large deviation analysis follows from the general
results stated in Proposition~\ref{prop:ldp_empirical_measure_conditioned}.
As a direct consequence, we obtain in Corollary~\ref{cor:ldp_Wop_var_principle} the asymptotics of eq.~\eqref{eq:ldp_Wop} as a variational principle over probability measures supported in $[-\kappa,\kappa]$.
We note that one could rely solely on the original large deviations principles for (unconditioned) $\GOE(d)$ matrices proven in~\cite{arous1997large},
however this requires additional care because the set of probability measures supported in $[-\kappa,\kappa]$ has empty interior under the weak topology:
for completeness, we provide in Appendix~\ref{sec:appendix_ldp} a proof of Corollary~\ref{cor:ldp_Wop_var_principle}
that appeared in an earlier version of this work, and which uses only the result of~\cite{arous1997large}.
We solve the resulting variational principle using the theory of logarithmic potentials~\citep{saff2013logarithmic} and Tricomi's theorem~\citep{tricomi1985integral}, similarly to the alternative proof of Wigner's semicircle law obtained by \cite{arous1997large} from their large deviations result.
Similar arguments also appeared in the theoretical physics literature~\citep{dean2006large,vivo2007large,dean2008extreme,majumdar2014top}.
We refer to \cite{anderson2010introduction,guionnet2022rare} for more background and open problems in the theory of large deviations for random matrices.

\myskip
\textbf{Remark --}
The following result is a byproduct of our analysis\footnote{
After the present work was made available as a pre-print, the author realized that another proof of Theorem~\ref{thm:lsd_constrained_GOE} had appeared in an unpublished manuscript~\citep{bouali2015constrained}.
}.
\begin{theorem}[Limiting spectral density of a constrained $\GOE(d)$ matrix]\label{thm:lsd_constrained_GOE}
   For $\kappa \in (0,2]$, denote $\bbP_\kappa$ the law of $\bW \sim \GOE(d)$ conditioned on $\|\bW\|_\op \leq \kappa$. 
    If $\bW \sim \bbP_\kappa$, then its empirical spectral density $\mu_\bW$ converges weakly (as $d \to \infty$, and a.s.) to $\mu_\kappa(\rd x) \coloneqq \rho_\kappa(x) \rd x$ given, for $x \in (-\kappa, \kappa)$, by:
    \begin{equation}
        \label{def:rhokappa_1}
        \rho_\kappa(x) \coloneqq \frac{4+\kappa^2-2x^2}{4 \pi \sqrt{\kappa^2 - x^2}}. 
    \end{equation}
    Notice that $\rho_{2}(x) = \sqrt{4 - x^2}/(2\pi)$ is Wigner's semicircle law, see Theorem~\ref{thm:wigner}.
\end{theorem}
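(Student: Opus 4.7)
The plan is to combine the large-deviation principle for the empirical spectral measure of a constrained $\GOE(d)$ matrix (Proposition~\ref{prop:ldp_empirical_measure_conditioned}, which specializes the classical $\beta$-ensemble LDP of \cite{arous1997large}) with a direct resolution of the associated variational problem via logarithmic potential theory. Since the rate function
\begin{equation*}
    J_\kappa(\mu) \coloneqq \frac{1}{4}\int x^2\,\mu(\rd x) - \frac{1}{2}\Sigma(\mu), \qquad \mu \in \mcM_1^+([-\kappa,\kappa])
\end{equation*}
(up to an additive normalizing constant) is strictly convex on the convex set of probability measures supported in $[-\kappa,\kappa]$, it admits a unique minimizer $\mu_\kappa$; combining the LDP with a Borel--Cantelli argument (using that the rate is in the scale $d^2$) then yields almost-sure weak convergence $\mu_\bW \to \mu_\kappa$. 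The remaining task is to identify $\mu_\kappa$ explicitly.

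For $\kappa \in (0,2)$, I would first argue that $\mathrm{supp}(\mu_\kappa) = [-\kappa,\kappa]$: if the support were a strict subset $[-a,a]$ with $a<\kappa$, the constraint would be slack and $\mu_\kappa$ would coincide with the global (unconstrained) minimizer of $J_\kappa$, which is Wigner's semicircle on $[-2,2]$, contradicting $a<2$. (For $\kappa = 2$ the semicircle is itself admissible and gives $\mu_2 = \rho_{\sci}$.) The Euler--Lagrange equation for the constrained minimization then yields a constant $c_\kappa$ with $\frac{1}{4}x^2 - \int \log|x-y|\,\mu_\kappa(\rd y) = c_\kappa$ for $x$ in the support, and differentiating on the interior produces the Tricomi-type airfoil equation
\begin{equation*}
    \frac{x}{2} = \mathrm{PV}\!\int_{-\kappa}^{\kappa} \frac{\mu_\kappa(\rd y)}{x-y}, \qquad x \in (-\kappa,\kappa).
\end{equation*}

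The third step is to invert this singular integral equation using Tricomi's theorem \cite{tricomi1985integral}. Writing $\mu_\kappa(\rd x) = \rho_\kappa(x)\,\rd x$, the inversion formula gives
\begin{equation*}
    \rho_\kappa(x) = -\frac{1}{\pi^2\sqrt{\kappa^2-x^2}}\,\mathrm{PV}\!\int_{-\kappa}^{\kappa} \frac{\sqrt{\kappa^2-y^2}\,(y/2)}{x-y}\,\rd y + \frac{C}{\sqrt{\kappa^2-x^2}},
\end{equation*}
where $C$ is fixed by the normalization $\int \rho_\kappa = 1$. Decomposing $y/(x-y) = -1 + x/(x-y)$ and invoking the standard identities $\int_{-\kappa}^{\kappa}\sqrt{\kappa^2-y^2}\,\rd y = \pi\kappa^2/2$ and $\mathrm{PV}\!\int_{-\kappa}^{\kappa}\sqrt{\kappa^2-y^2}/(x-y)\,\rd y = \pi x$ for $|x|<\kappa$, the principal-value integral collapses to $\pi(2x^2-\kappa^2)/4$. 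A direct check shows that the resulting rational piece integrates to zero over $[-\kappa,\kappa]$, forcing $C = 1/\pi$, and yielding exactly
\begin{equation*}
    \rho_\kappa(x) = \frac{4+\kappa^2-2x^2}{4\pi\sqrt{\kappa^2-x^2}}.
\end{equation*}

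The final verification is that $\rho_\kappa$ is indeed a probability density: the denominator is positive on $(-\kappa,\kappa)$, and the numerator $4+\kappa^2-2x^2 \geq 4-\kappa^2$ is nonnegative precisely under the hypothesis $\kappa \leq 2$, while at $\kappa = 2$ it collapses to $\sqrt{4-x^2}/(2\pi) = \rho_{\sci}$, consistently with Wigner's law. The main technical obstacles are the support identification and the justification of differentiating the Euler--Lagrange identity on the open interval; both are standard in logarithmic potential theory (see \cite{saff2013logarithmic}), and rest on the facts that the effective potential $\frac{1}{4}x^2$ is smooth and that the Lagrange multipliers associated with the endpoint constraints are active for $\kappa < 2$.
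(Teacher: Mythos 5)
Your overall architecture matches the paper's: the large-deviation principle of Proposition~\ref{prop:ldp_empirical_measure_conditioned} for the constrained spectral measure, strict convexity (hence uniqueness of the minimizer), and Borel--Cantelli together yield the a.s.\ weak convergence. The routes diverge in how the minimizing measure is identified. The paper uses a guess-and-verify strategy: Tricomi's theorem is invoked only \emph{heuristically} to produce the candidate density (see the remark following Proposition~\ref{prop:mukappa_value}), and the candidate is then certified rigorously by Lemma~\ref{lemma:log_pot_mukappa} (the flat-logarithmic-potential characterization) together with a residue-theorem computation of the Stieltjes transform $G_\kappa(z)$. That route requires no a priori knowledge of the support of the minimizer. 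You instead derive the density directly: first argue that the support is all of $[-\kappa,\kappa]$, then write the differentiated Euler--Lagrange (airfoil) equation, and invert it rigorously with Tricomi. Your Tricomi inversion and the normalization $C=1/\pi$ are both correct.

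The genuine gap is in the support-identification step. The claim ``if the support were a strict subset $[-a,a]$, the constraint would be slack and $\mu_\kappa$ would coincide with the global minimizer'' is not a proof as stated: the set of probability measures supported in $[-a,a]$ is \emph{not} relatively open in $\mcM_1^+([-\kappa,\kappa])$ under the weak topology, and the Frostman/Euler--Lagrange conditions for the constrained problem give the inequality $\int\log|x-y|\,\mu(\rd y) \leq x^2/4 + C$ only for $x \in [-\kappa,\kappa]$, not on all of $\bbR$; a slack constraint therefore does not hand you the unconstrained equilibrium measure by generic convexity. What is actually needed is a hard-edge theorem from constrained equilibrium theory (available in~\cite{saff2013logarithmic}, but not by the convexity heuristic you sketched). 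The cleanest fix is to mirror the paper and skip the support argument altogether: once you have the candidate $\rho_\kappa$ from the Tricomi inversion, check it is nonnegative on $(-\kappa,\kappa)$ for $\kappa \le 2$ (which you do at the end) and integrates to one; since by construction it satisfies the flat-potential equation $\int\log|x-y|\,\rho_\kappa(y)\,\rd y = x^2/4 + C$ on $(-\kappa,\kappa)$, Lemma~\ref{lemma:log_pot_mukappa} immediately identifies it as the unique minimizer. This turns your derivation into a verification, and the support $[-\kappa,\kappa]$ then comes out a posteriori rather than being assumed.
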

\noindent
We illustrate the form of $\rho_\kappa(x)$ in Figure~\ref{fig:rho_kappa}.
Theorem~\ref{thm:lsd_constrained_GOE} is proven in Section~\ref{subsec:proof_lsd_constrained_GOE}.
\begin{figure}[!t]
   \centering
\includegraphics[width=0.9\textwidth]{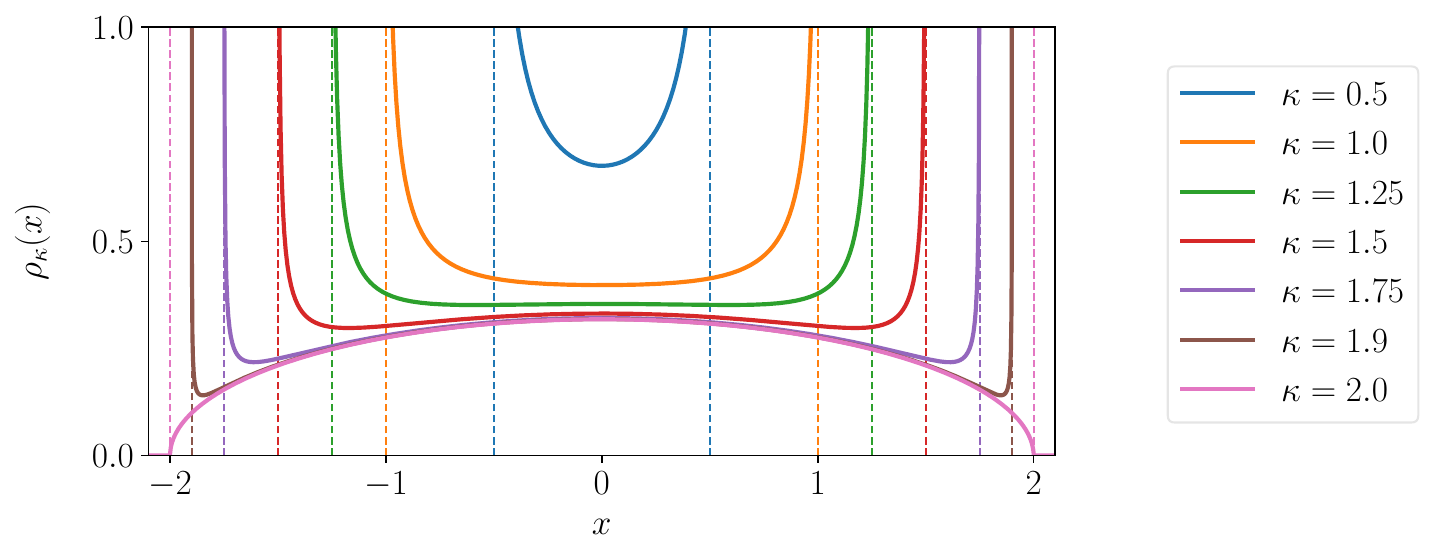}
\caption{$\rho_\kappa(x)$ of eq.~\eqref{def:rhokappa_1} for different values of $\kappa$. For $\kappa = 2$ one recovers the semicircle law.\label{fig:rho_kappa}}
\end{figure}

\myskip
Let us denote
$S_\kappa \coloneqq \{\eps \in \{\pm 1\}^n \, \textrm{s.t.} \, \|\sum_{i=1}^n \eps_i \bW_i\|_{\op} \leq \kappa \sqrt{n} \}$,
so that $Z_\kappa = \# S_\kappa$.
Anticipating our satisfiability results, we note that even if $\bbP[Z_\kappa \geq 1] = 1 - \smallO_d(1)$ as shown in Theorem~\ref{thm:second_moment} for sufficiently large $\tau = n/d^2$, Theorem~\ref{thm:lsd_constrained_GOE} does not ensure that the asymptotic spectrum of $\sum_i \eps_i \bW_i$ (for $\eps \sim \Unif(S_\kappa)$) equals $\rho_\kappa$.
However, one can easily prove that this would be implied e.g.\ by the condition $\EE[Z_\kappa]^2 = (1+\smallO_d(1)) \EE[Z_\kappa]^2$, which is strictly stronger than $\bbP[Z_\kappa \geq 1] = 1 - \smallO_d(1)$.
Unfortunately, the bounds established in Section~\ref{sec:2nd_moment} to prove Theorem~\ref{thm:second_moment} only yield $\EE[Z_\kappa]^2 \leq C \EE[Z_\kappa]^2$ in the satisfiable phase, for a possibly large $C > 1$.
For this reason, we leave the establishment of the limiting spectral density of $\sum_i \eps_i \bW_i$ (for $\eps \sim \Unif(S_\kappa)$) as an open question, 
with $\rho_\kappa$ a natural conjecture.

\subsection{Large deviations results}

Our proof leverages a large deviation principle for the empirical eigenvalue distribution in a large class of matrix models.
Such results were initiated in the mathematics literature by~\cite{arous1997large} in the case of $\GOE(d)$ matrices, and have been then extended, see e.g.~\cite{guionnet2022rare} for a discussion.

\myskip
The space $\mcM_1^+(\bbR)$ of probability measures on $\bbR$ is endowed with its usual weak topology. 
It is metrizable by the Dudley metric~\citep{bogachev2007measure}:
\begin{equation}
    \label{eq:dudley}
    d(\mu, \nu) \coloneqq \left\{\left|\int f \rd \mu - \int f \rd v\right| \, : \, |f(x)| \leq 1 \textrm{ and } |f(x) - f(y)| \leq |x - y|, \, \forall (x, y) \in \bbR^2\right\}.
\end{equation}
Recall that $\Sigma(\mu) \coloneqq \int \mu(\rd x) \mu(\rd y) \log |x-y|$.
The following statement is borrowed from~\cite{fan2015convergence} (variants can be found e.g.\ in~\cite{arous1997large,anderson2010introduction}).
\begin{proposition}[\cite{fan2015convergence}]
    \label{prop:ldp_empirical_measure_conditioned}
    Let $B \subseteq \bbR$ an interval, and $V : B \to \bbR$ a continuous function such that $\lim_{x \to \pm \infty} \frac{V(x)}{2 \log |x|} = +\infty$ (if $B$ is not bounded).
    For $\blambda \in \bbR^d$, let
    \begin{align}\label{eq:coulomb_gas_constraint}
        \mu_{d}^{V,B}(\rd \blambda) \coloneqq \frac{1}{Z_{d}^{V,B}} \prod_{i=1}^d \left(\rd \lambda_i \, \indi\{\lambda_i \in B\} \right) \, e^{-\frac{d}{2} \sum_{i=1}^d V(\lambda_i)} \, \prod_{i < j} |\lambda_i - \lambda_j|,
    \end{align}
    with $Z_d^{V,B}$ a normalization factor, which ensures that $\int \mu_d^{V, B}(\rd \blambda) = 1$.\\
    Denote $\nu_\blambda \coloneqq (1/d) \sum_{i=1}^d \delta_{\lambda_i}$ the empirical measure of $\blambda$.
    Then the law of $\nu_\blambda$ satisfies a large deviation principle, in the scale $d^2$, with good rate function $I_{V,B}(\nu) \coloneqq \mcE_V(\nu) - \inf_{\nu \in \mcM_1^+(B)}[\mcE_V(\nu)]$, where 
    \begin{align}\label{eq:E_V}
        \mcE_V(\nu) \coloneqq - \frac{1}{2} \Sigma(\nu) + \frac{1}{2} \int V(x) \, \nu(\rd x).
    \end{align}
    In particular: 
    \begin{itemize}
        \item[$(i)$] For any $M > 0$, $\{I_{V, B} \leq M\}$ is a compact subset of $\mcM_1^+(B)$.
        \item[$(ii)$]  
        For any open (respectively closed) subset $O \subseteq \mcM_1^+(B)$ (respectively $F \subseteq \mcM_1^+(B)$): 
        \begin{equation*}
            \begin{dcases}
                \liminf_{d\to\infty} \frac{1}{d^2} \log \mu_d^{V,B}[\nu_\blambda \in O] \geq - \inf_{\nu \in O} I_{V,B}(\nu), \\
                \limsup_{d\to\infty} \frac{1}{d^2} \log \mu_d^{V,B}[\nu_\blambda \in F] \leq - \inf_{\nu \in F} I_{V,B}(\nu).
            \end{dcases}
        \end{equation*}
        \item[$(iii)$] 
        \begin{equation*}
            \lim_{d \to \infty} \frac{1}{d^2} \log Z_d^{V,B} = - \inf_{\nu \in \mcM_1^+(B)}[\mcE_V(\nu)].
        \end{equation*}
    \end{itemize}
\end{proposition}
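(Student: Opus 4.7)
The plan is to prove the three standard ingredients of a large deviation principle at speed $d^2$: good rate function, upper bound on closed sets, and matching lower bound on open sets. The starting point is to rewrite the density in~\eqref{eq:coulomb_gas_constraint} as $\exp\{-d^2 H_d(\nu_\blambda)\}/Z_d^{V,B}$ modulo boundary corrections, where $H_d(\nu) \coloneqq \frac{1}{2} \int V \, d\nu - \frac{1}{2} \iint_{x \neq y} \log|x-y|\, \nu(dx) \nu(dy)$ formally converges to $\mcE_V$; the unique technical difficulty is that $-\log|x-y|$ blows up on the diagonal, which forces a truncation argument to make the Riemann-sum picture rigorous.

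First I would verify that $\mcE_V$ is a good rate function. Writing $-\log|x-y| = \sup_M \min(-\log|x-y|, M)$ as a supremum of bounded continuous functions shows that $-\Sigma$ is a supremum of weakly continuous functionals, hence lower semi-continuous; together with weak continuity of $\nu \mapsto \int V\, d\nu$ this gives lower semi-continuity of $\mcE_V$. Exponential tightness follows from the growth assumption on $V$: sublevel sets $\{\mcE_V \leq M\}$ are contained in $\{\int V\, d\nu \leq C_M\}$, which is tight and hence compact once closed, giving $(i)$. For the upper bound in $(ii)$, the truncation $\log_M|x-y| \coloneqq \max(\log|x-y|, -M)$ is bounded continuous and satisfies $-\log|x-y| \geq -\log_M|x-y|$ pointwise, so $-\sum_{i<j}\log|\lambda_i - \lambda_j| \geq -(d^2/2) \iint \log_M|x-y|\, \nu_\blambda(dx)\nu_\blambda(dy) - O(d \log M)$; plugging this into the integrand, taking $d \to \infty$ for fixed $M$ using weak continuity on the tight set, then letting $M \to \infty$ via monotone convergence recovers $-\inf_F \mcE_V$ as the exponential rate on any closed $F$.

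For the lower bound on an open set $O$, I would pick $\nu^\star \in O$ with $\mcE_V(\nu^\star)$ close to $\inf_O \mcE_V$, convolve it with a small Gaussian so that the regularized measure has bounded density (without significantly changing its energy, using that $\mcE_V(\nu^\star) < \infty$ rules out atoms), place eigenvalues at the quantiles $\lambda_i^\star = F_{\nu^\star}^{-1}(i/(d+1))$, and integrate the Coulomb weight over small boxes of radius $\delta/d$ around these points. The Lebesgue volume contributes only $\exp(-o(d^2))$ while the Coulomb weight reproduces $\exp\{-d^2 \mcE_V(\nu^\star) + o(d^2)\}$, matching the upper bound. The main obstacle is precisely this matching of discrete and continuum Coulomb energies uniformly in the Gaussian regularization scale and the quantile spacing; this is where the truncation of the logarithm (from both sides now, in complementary ways) becomes delicate. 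Finally, item $(iii)$ is a direct corollary of $(ii)$ applied with $F = O = \mcM_1^+(B)$: Laplace's principle at speed $d^2$ then gives $\lim (1/d^2)\log Z_d^{V,B} = -\inf_{\nu \in \mcM_1^+(B)} \mcE_V(\nu)$.
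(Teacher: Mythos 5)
The paper does not prove this proposition; it imports it verbatim from \cite{fan2015convergence} (cf.\ \cite{arous1997large,anderson2010introduction}), and only in Appendix~\ref{sec:appendix_ldp} does it carry out a large-deviation argument, namely an alternative proof of Corollary~\ref{cor:ldp_Wop_var_principle} that uses \emph{only} the special case $B=\bbR$, $V(x)=x^2/2$ of this proposition. So there is no ``paper's own proof'' of the general statement to compare against; your sketch is reconstructing the classical Ben~Arous--Guionnet argument, which is the right family of ideas. That said, two steps in your sketch are not correct as written.

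First, the lower semi-continuity argument for $-\Sigma$ has a gap when $B$ is unbounded. Writing $-\log|x-y| = \sup_M \min(-\log|x-y|, M)$ only truncates the \emph{upper} singularity at $x=y$; the kernel $\min(-\log|x-y|, M)$ is still unbounded below as $|x-y|\to\infty$, so $\nu \mapsto \iint \min(-\log|x-y|, M)\,\nu(dx)\nu(dy)$ is not a weakly continuous functional on $\mcM_1^+(\bbR)$, and the supremum representation does not deliver lower semi-continuity. The standard fix, which you invoke later for tightness but not here, is to absorb the potential into the kernel: set $f(x,y) = -\log|x-y| + \tfrac14 (V(x)+V(y))$, which is bounded below and lower semi-continuous thanks to the growth condition $V(x) \gg 2\log|x|$, write $\mcE_V(\nu) = \tfrac12 \iint f\,d\nu\,d\nu$, and then approximate $f$ from below by $\min(f,M)$. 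This gives lower semi-continuity of $\mcE_V$ itself (not of $-\Sigma$ in isolation) and is also what makes the upper-bound truncation argument go through on unbounded $B$.

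Second, and more substantively, item $(iii)$ does not follow from $(ii)$ with $F = O = \mcM_1^+(B)$. Statement $(ii)$ concerns the \emph{normalized} probability measure $\mu_d^{V,B}$, so taking $F = O = \mcM_1^+(B)$ yields only the vacuous $0 \geq -\inf I_{V,B} = 0$; all dependence on $Z_d^{V,B}$ has already been divided out. The information in $(iii)$ is genuinely about the normalization and must be obtained from the \emph{unnormalized} measure. Reading your upper- and lower-bound arguments, you actually do bound the rate of the unnormalized integral by $-\inf_F \mcE_V$ and $-\inf_O \mcE_V$ respectively — so the correct logical ordering is: (a) establish those two-sided unnormalized bounds; (b) deduce $(iii)$ by taking $F = O = \mcM_1^+(B)$ in the \emph{unnormalized} bounds; (c) deduce $(ii)$ by subtracting $(iii)$, i.e.\ dividing by $Z_d^{V,B}$, which replaces $\mcE_V$ by $I_{V,B} = \mcE_V - \inf \mcE_V$. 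As written, your sketch has this dependency backwards.
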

\noindent
 Proposition~\ref{prop:ldp_empirical_measure_conditioned} directly applies to $\bbP_\kappa$, the law of $\bW \sim \GOE(d)$ conditioned on $\|\bW\|_\op \leq \kappa$, 
 with $V(x) = x^2/2$. We obtain from it the following variational formulation for the limit of the left-hand side of eq.~\eqref{eq:ldp_Wop}.
 \begin{corollary}\label{cor:ldp_Wop_var_principle}
    For any $\kappa > 0$, with $\bW \sim \GOE(d)$:
    \begin{equation*}
        \lim_{d \to \infty} \frac{1}{d^2} \log \bbP[\|\bW\|_\op \leq \kappa] = \frac{3}{8} - \inf_{\nu \in \mcM_1^+([-\kappa,\kappa])} \left[-\frac{1}{2} \Sigma(\nu) + \frac{1}{4} \int \nu(\rd x) \, x^2\right].
    \end{equation*}
 \end{corollary}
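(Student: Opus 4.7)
The strategy is to write the desired probability as a ratio of two partition functions of the form~\eqref{eq:coulomb_gas_constraint}, and then apply part $(iii)$ of Proposition~\ref{prop:ldp_empirical_measure_conditioned} to each.

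\textbf{Step 1: identify the potential.} Recall that if $\bW \sim \GOE(d)$ with eigenvalues $(\lambda_i)_{i=1}^d$, the joint density of $(\lambda_1, \dots, \lambda_d)$ takes the form
\[
    p(\blambda) \,\propto\, e^{-\frac{d}{4} \sum_{i=1}^d \lambda_i^2} \prod_{i < j} |\lambda_i - \lambda_j|,
\]
which matches exactly eq.~\eqref{eq:coulomb_gas_constraint} with $V(x) = x^2/2$ and $B = \bbR$. Conditioning on the event $\{\|\bW\|_\op \leq \kappa\}$ amounts to restricting all eigenvalues to the compact interval $[-\kappa, \kappa]$, which corresponds to replacing $B = \bbR$ by $B = [-\kappa, \kappa]$ and leaving $V$ unchanged. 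Therefore
\[
    \bbP[\|\bW\|_\op \leq \kappa] = \frac{Z_d^{V, [-\kappa, \kappa]}}{Z_d^{V, \bbR}}.
\]

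\textbf{Step 2: apply the large deviation principle.} Since $V(x) = x^2/2$ satisfies the growth hypothesis of Proposition~\ref{prop:ldp_empirical_measure_conditioned}, part $(iii)$ applies to both $Z_d^{V,\bbR}$ and $Z_d^{V, [-\kappa, \kappa]}$ (note that $V$ restricted to the compact set $[-\kappa,\kappa]$ trivially satisfies the hypothesis as well). Taking the logarithm and dividing by $d^2$ yields
\[
    \lim_{d \to \infty} \frac{1}{d^2} \log \bbP[\|\bW\|_\op \leq \kappa]
    = \inf_{\nu \in \mcM_1^+(\bbR)} \mcE_V(\nu) - \inf_{\nu \in \mcM_1^+([-\kappa, \kappa])} \mcE_V(\nu),
\]
with $\mcE_V(\nu) = -\tfrac{1}{2} \Sigma(\nu) + \tfrac{1}{4} \int x^2 \nu(\rd x)$.

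\textbf{Step 3: evaluate the unconstrained infimum.} It remains to show that $\inf_{\nu \in \mcM_1^+(\bbR)} \mcE_V(\nu) = 3/8$, the minimizer being Wigner's semicircle law $\rho_{\sci}$. By the standard Euler--Lagrange analysis for logarithmic energy functionals (see e.g.~\cite{saff2013logarithmic}), the minimizer of $\mcE_V$ over $\mcM_1^+(\bbR)$ satisfies, on its support,
\[
    V(x) - 2 \int \log|x-y| \, \rho_{\sci}(\rd y) = C,
\]
for some Lagrange multiplier $C$. For $V(x)=x^2/2$, a direct computation (or the classical identity $\int \log|x-y|\rho_{\sci}(\rd y) = x^2/4 - 1/2$ for $|x|\leq 2$) gives $C = 1$. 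Integrating this identity against $\rho_{\sci}$ yields $\Sigma(\rho_{\sci}) = \frac{1}{2}\left(\int V \,\rd \rho_{\sci} - C\right) = \frac{1}{2}(1/2 - 1) = -1/4$, and therefore
\[
    \mcE_V(\rho_{\sci}) = -\frac{1}{2} \cdot \left(-\frac{1}{4}\right) + \frac{1}{4} \cdot 1 = \frac{3}{8},
\]
which combined with Step~2 gives the claimed identity.

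There is essentially no obstacle here: the result is a direct corollary once one identifies $V(x) = x^2/2$ and uses the known ground state energy of the semicircle law. The one point that deserves care is checking that Proposition~\ref{prop:ldp_empirical_measure_conditioned} indeed applies with $B = [-\kappa,\kappa]$ (where the growth condition is vacuous) and that the prefactor $d/2$ in front of $V$ matches the GOE density, which it does with the above choice of $V$.
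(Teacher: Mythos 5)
Your proof is correct, and the decomposition is the same as the paper's: writing $\bbP[\|\bW\|_\op \leq \kappa]$ as the ratio $Z_d^{V,[-\kappa,\kappa]}/Z_d^{V,\bbR}$ with $V(x) = x^2/2$ (this is precisely eq.~\eqref{eq:joint_law_evalues}) and applying Proposition~\ref{prop:ldp_empirical_measure_conditioned}-$(iii)$ to the constrained partition function. The one point of divergence is the denominator: the paper computes $\lim_{d\to\infty} d^{-2}\log Z_d^{V,\bbR} = -3/8$ directly from Selberg's integral (eq.~\eqref{eq:part_function_selberg}), whereas you apply part $(iii)$ a second time and then evaluate $\inf_{\nu\in\mcM_1^+(\bbR)}\mcE_V(\nu) = 3/8$ explicitly using the classical identity $\int \log|x-y|\,\rho_{\sci}(\rd y) = x^2/4 - 1/2$ for $|x|\le 2$. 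Both are standard tools; your route avoids Selberg at the cost of needing the semicircle Euler--Lagrange identity, which the paper in effect also establishes later as the $\kappa = 2$ case of Proposition~\ref{prop:mukappa_value}. Your arithmetic ($\Sigma(\rho_{\sci}) = -1/4$, hence $\mcE_V(\rho_{\sci}) = 1/8 + 1/4 = 3/8$) checks out.
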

 \begin{proof}[Proof of Corollary~\ref{cor:ldp_Wop_var_principle}]
    The joint law of the eigenvalues $(\lambda_1, \cdots, \lambda_d)$ of $\bW$ is well-known thanks to the rotation invariance of the law of $\bW$. 
We have (see e.g.\ Theorem~2.5.2 of \cite{anderson2010introduction}):
\begin{align}\label{eq:joint_law_evalues}
    \bbP[\|\bW\|_\op \leq \kappa] &= \frac{\int_{[-\kappa,\kappa]^d} \prod_{i < j} |\lambda_i - \lambda_j| e^{-\frac{d}{4} \sum_{i=1}^d \lambda_i^2} \prod_{i=1}^d \rd \lambda_i}{\int_{\bbR^d} \prod_{i < j} |\lambda_i - \lambda_j| e^{-\frac{d}{4} \sum_{i=1}^d \lambda_i^2} \prod_{i=1}^d \rd \lambda_i}.
\end{align}
The denominator (or partition function) can be computed from Selberg's integrals~\citep{mehta2014random}.
Its limit is given by
\begin{align}\label{eq:part_function_selberg}
    \lim_{d \to \infty} \frac{1}{d^2} \log \int_{\bbR^d} \prod_{i < j} |\lambda_i - \lambda_j| e^{-\frac{d}{4} \sum_{i=1}^d \lambda_i^2} \prod_{i=1}^d \rd \lambda_i 
    &= - \frac{3}{8}.
\end{align}
The result follows then from Proposition~\ref{prop:ldp_empirical_measure_conditioned}-$(iii)$.
 \end{proof}

 \myskip
 \textbf{Alternative proof --}
 For reasons of completeness, and as a first version of this manuscript contained it, we detail in Appendix~\ref{sec:appendix_ldp} a proof of Corollary~\ref{cor:ldp_Wop_var_principle} 
 that does not rely on Proposition~\ref{prop:ldp_empirical_measure_conditioned}, but only on its particular case $B = \bbR$ and $V(x) = x^2/2$, i.e.\ when $\mu_{d}^{V,B}$ is the joint law of the eigenvalues of a $\GOE(d)$ matrix.
 This setting was tackled in the seminal work of~\cite{arous1997large}.
 
 \subsection{Solving the variational principle: proof of Proposition~\ref{prop:ldp_Wop}}\label{subsec:var_principle_1st_mom}

 For $\mu \in \mcM_1^+(\bbR)$, let 
\begin{equation}\label{eq:def_I}
    I(\mu) \coloneqq -\frac{1}{2} \Sigma(\mu) + \frac{1}{4} \int \mu(\rd x) \, x^2 - \frac{3}{8}.
\end{equation}
 We establish here the following equation, for $\kappa \leq 2$: 
 \begin{align}\label{eq:to_prove_var_1st_mom}
    E_\kappa \coloneqq \inf_{\mu \in \mcM_1^+([-\kappa, \kappa])} I(\mu) = 
    - \frac{\kappa^4}{128} + \frac{\kappa^2}{8} - \frac{1}{2} \log \frac{\kappa}{2} - \frac{3}{8}.
 \end{align}
 Combining eq.~\eqref{eq:to_prove_var_1st_mom} with Corollary~\ref{cor:ldp_Wop_var_principle} yields then Proposition~\ref{prop:ldp_Wop}.

 \myskip 
In order to characterize the minimizer of $I(\mu)$ for $\mu \in \mcM_1^+([-\kappa,\kappa])$, we
rely on classical results of logarithmic potential theory, such as Theorem~1.3 of Chapter~I of \cite{saff2013logarithmic}, see also \cite{mhaskar1985does} and Theorem~2.4 of \cite{arous1997large}.
In our context, the ``admissible weight function'' of~\cite{saff2013logarithmic} reads
\begin{equation*}
    w(x) = \exp\{-x^2/4\} \indi\{|x| \leq \kappa\}.
\end{equation*}
Adapting the results of the aforementioned literature to our notations, we obtain the following theorem.
\begin{theorem}[\cite{saff2013logarithmic}]
    \label{thm:properties_inf_I}
    Let $\kappa > 0$ and
    $E_\kappa \coloneqq \inf_{\mu \in \mcM_1^+([-\kappa,\kappa])} I(\mu)$. Then
    \begin{itemize}
        \item[$(i)$] $E_\kappa < \infty$.
        \item[$(ii)$] There exists a unique $\mu_\kappa^\star \in \mcM_1^+([-\kappa,\kappa])$ such that $I(\mu_\kappa^\star) = E_\kappa$.
        \item[$(iii)$] $\mu_\kappa^\star$ is the unique measure in $\mcM_1^+([-\kappa,\kappa])$ such that for $\mu_\kappa^\star$-almost all $x$: 
        \begin{equation*}
            \int \mu_\kappa^\star(\rd y) \, \log |x - y| = \frac{x^2}{4} + \frac{1}{4} \int \mu_\kappa^\star(\rd y) \, y^2 - \frac{3}{4} - 2 E_\kappa.
        \end{equation*}
    \end{itemize} 
\end{theorem}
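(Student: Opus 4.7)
\emph{Proof plan for Theorem~\ref{thm:properties_inf_I}.} My plan is to derive the three points directly from the general variational/potential-theoretic framework, following the blueprint of Saff--Totik but keeping the arguments self-contained.

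For $(i)$, I would simply exhibit a test measure with finite energy. The uniform law $\mu_{\mathrm{u}}(\rd x) = (2\kappa)^{-1} \indi\{|x|\leq\kappa\} \rd x$ works: its second moment is $\kappa^2/3$, and $\Sigma(\mu_{\mathrm{u}}) = (2\kappa)^{-2}\iint_{[-\kappa,\kappa]^2} \log|x-y|\rd x\rd y$ is finite by direct computation. Hence $E_\kappa \leq I(\mu_{\mathrm{u}}) < \infty$.

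For $(ii)$, the functional $I$ is weakly lower semi-continuous on $\mcM_1^+([-\kappa,\kappa])$: the map $x \mapsto x^2$ is continuous and bounded on $[-\kappa,\kappa]$, while $(x,y) \mapsto -\log|x-y|$ is bounded below on $[-\kappa,\kappa]^2$ and lower semi-continuous, which passes to $-\Sigma$ by a standard monotone-approximation argument. Since $\mcM_1^+([-\kappa,\kappa])$ is weakly compact (Prokhorov), a minimizer exists. For uniqueness, I would invoke strict convexity of $I$: the quadratic term is linear in $\mu$, and $-\Sigma$ is strictly convex because for any nonzero signed measure $\sigma$ of compact support and zero total mass, the classical positivity identity $-\iint \log|x-y|\,\rd\sigma(x)\rd\sigma(y) > 0$ holds (a standard consequence of the Fourier/Frostman representation of the logarithmic kernel). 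Applying this to $\sigma = \mu - \nu$ with $\mu\neq \nu$ two probability measures on $[-\kappa,\kappa]$ yields strict convexity of $-\tfrac12\Sigma$, hence of $I$, and thus the uniqueness of $\mu_\kappa^\star$.

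For $(iii)$, I would carry out a first-order perturbation. For any competitor $\nu \in \mcM_1^+([-\kappa,\kappa])$, let $\mu_t \coloneqq (1-t)\mu_\kappa^\star + t\nu$ for $t \in [0,1]$. Expanding the bilinear energy shows that $t \mapsto I(\mu_t)$ is polynomial of degree $\leq 2$, and by minimality its derivative at $t=0^+$ must be non-negative. A direct computation gives
\begin{equation*}
    0 \leq \left.\frac{\rd}{\rd t}\right|_{t=0^+} I(\mu_t) = \int \phi(x) \, \rd(\nu-\mu_\kappa^\star)(x), \qquad \phi(x) \coloneqq \frac{x^2}{4} - \int \log|x-y|\,\rd\mu_\kappa^\star(y).
\end{equation*}
Choosing $\nu = \delta_x$ for arbitrary $x \in [-\kappa,\kappa]$ (formally justified by approximation and the lower semi-continuity of $\phi$) forces $\phi(x) \geq C$ on $[-\kappa,\kappa]$ for some constant $C$, with equality $\mu_\kappa^\star$-almost everywhere. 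Integrating $\phi = C$ against $\mu_\kappa^\star$ identifies $C = \tfrac14 \int y^2 \rd\mu_\kappa^\star(y) - \Sigma(\mu_\kappa^\star)$, and substituting the relation $\Sigma(\mu_\kappa^\star) = \tfrac12\int y^2\rd\mu_\kappa^\star - \tfrac34 - 2E_\kappa$ (obtained from $I(\mu_\kappa^\star) = E_\kappa$ and the definition~\eqref{eq:def_I}) produces exactly the identity stated in $(iii)$. Uniqueness of $\mu_\kappa^\star$ among measures satisfying this Euler--Lagrange equation then follows from strict convexity: any such $\mu'$ also achieves the infimum, hence equals $\mu_\kappa^\star$.

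The only delicate step is upgrading the $\mu_\kappa^\star$-a.e.\ equality to the pointwise inequality $\phi \geq C$ on $[-\kappa,\kappa]$: this requires a bit of potential-theoretic care because $\phi$ is only lower semi-continuous in general. I would handle it by noting that $\phi$ is continuous off the support (smoothness of the Coulomb potential for a measure with compact support) and by a standard ``quasi-everywhere'' argument to eliminate polar exceptional sets — equivalently, one just cites the corresponding statement in Chapter~I of~\cite{saff2013logarithmic}. This is the main technical point; the rest of the proof is routine convex analysis on the compact space $\mcM_1^+([-\kappa,\kappa])$.
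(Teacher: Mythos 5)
The paper does not actually prove Theorem~\ref{thm:properties_inf_I}: it is stated as a direct citation of Theorem~1.3, Chapter~I of Saff--Totik (together with \cite{mhaskar1985does} and Theorem~2.4 of~\cite{arous1997large}), specialized to the admissible weight $w(x)=\exp\{-x^2/4\}\indi\{|x|\leq\kappa\}$. Your proposal is therefore a self-contained reconstruction rather than an alternative to a proof the paper gives, and it is essentially the standard potential-theoretic argument these references follow: existence via lower semi-continuity and compactness, uniqueness via the Fourier/Frostman positivity of $-\Sigma$ on zero-mass signed measures (the same identity appears in the paper as Lemma~\ref{lemma:nonc_entropy_pos}), and the Euler--Lagrange condition by first-order perturbation. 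Your computation of the variation and the constant is correct, and your identification of the genuine technical point (passing from a $\mu_\kappa^\star$-a.e.\ equality to a quasi-everywhere inequality on $[-\kappa,\kappa]$) is exactly the content delegated to Saff--Totik.

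Two small points worth tightening. First, in part~$(iii)$ the claim ``any such $\mu'$ also achieves the infimum'' deserves a line of justification: one should implicitly restrict to $\mu'$ of finite energy (so the potential $\int\log|x-y|\,\rd\mu'(y)$ is defined $\mu'$-a.e.), and then integrating the stated Euler--Lagrange identity against $\mu'$ gives $\Sigma(\mu') = \tfrac12\int y^2\,\rd\mu' - \tfrac34 - 2E_\kappa$, i.e.\ $I(\mu')=E_\kappa$; the uniqueness of minimizers then yields $\mu'=\mu_\kappa^\star$. The specific constant on the right-hand side of~$(iii)$, containing $E_\kappa$, is what makes the equality alone (without the pointwise inequality $\phi\geq C$) a characterization. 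Second, the choice $\nu=\delta_x$ in the perturbation argument has infinite energy, so strictly speaking the convexity/differentiability argument does not directly apply to it; you flag this (``formally justified by approximation'') and that is the correct caveat. The paper sidesteps all of this by proving only the converse direction it needs, namely Lemma~\ref{lemma:log_pot_mukappa}, which shows that a measure with $\int\mu(\rd y)\log|x-y| = x^2/4 + C$ \emph{for all} $x\in(-\kappa,\kappa)$ must equal $\mu_\kappa^\star$ --- a cleaner hypothesis that avoids the quasi-everywhere subtleties entirely.
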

\noindent
One can further show that this theorem allows to prove that a candidate measure is the optimal one without computing $E_\kappa$.
\begin{lemma}\label{lemma:log_pot_mukappa}
    For $\kappa \in (0,2]$, assume that $\mu \in \mcM_1^+([-\kappa,\kappa])$ and $C \in \bbR$ are such that for all $x \in (-\kappa,\kappa)$:
    \begin{equation}\label{eq:log_pot_mukappa_hyp}
        \int \mu(\rd y) \, \log |x - y| = \frac{x^2}{4} + C.
    \end{equation}
    Then $\mu = \mu_\kappa^\star$.
\end{lemma}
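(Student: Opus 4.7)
The plan is to show that $\mu$ is itself the unique minimizer of the functional $I$ from eq.~\eqref{eq:def_I} over $\mcM_1^+([-\kappa,\kappa])$; the identification $\mu = \mu_\kappa^\star$ then follows from the uniqueness part of Theorem~\ref{thm:properties_inf_I}-$(ii)$. The core algebraic tool is the quadratic expansion of $I$ around $\mu$: writing $U^\mu(x) \coloneqq \int \log|x-y|\,\mu(\rd y)$ and $B(\eta_1,\eta_2) \coloneqq \int\!\int \log|x-y|\,\eta_1(\rd x)\,\eta_2(\rd y)$, a direct computation from eq.~\eqref{eq:def_I} yields
\begin{equation*}
I(\nu) - I(\mu) \;=\; \int \left[\frac{x^2}{4} - U^\mu(x)\right] (\nu-\mu)(\rd x) \;-\; \frac{1}{2}\, B(\nu-\mu, \nu-\mu)
\end{equation*}
for any $\nu \in \mcM_1^+([-\kappa,\kappa])$ of finite logarithmic energy (for $\nu$ with $\Sigma(\nu) = -\infty$ the conclusion $I(\nu)\geq I(\mu)$ is trivial). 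By the classical strict positive-definiteness of $-\log|\cdot|$ on compactly supported signed measures with vanishing total mass (Section~I.1 of~\cite{saff2013logarithmic}), the second term is non-negative and vanishes only at $\nu=\mu$.

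Before exploiting the hypothesis, I would first rule out atoms of $\mu$ at the endpoints: if $\mu(\{\kappa\}) = \alpha > 0$ then $U^\mu(x) \leq \alpha \log|x-\kappa| + (1-\alpha)\log(2\kappa) \to -\infty$ as $x \uparrow \kappa$, contradicting eq.~\eqref{eq:log_pot_mukappa_hyp}, and symmetrically at $-\kappa$; hence $\mu((-\kappa,\kappa)) = 1$. Moreover, $U^\mu$ is upper semi-continuous (by reverse Fatou applied to the integrand $\log|x-y|$, which is uniformly bounded above for $x,y$ in a compact set), so sending $x \to \pm \kappa$ in eq.~\eqref{eq:log_pot_mukappa_hyp} gives $U^\mu(\pm\kappa) \leq \kappa^2/4 + C$, equivalently $x^2/4 - U^\mu(x) \geq -C$ at the two endpoints as well.

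With these facts in hand, the conclusion is immediate. Because $\mu((-\kappa,\kappa)) = 1$ and $x^2/4 - U^\mu(x) = -C$ on $(-\kappa,\kappa)$ by hypothesis, one has $\int [x^2/4 - U^\mu(x)]\,\rd\mu = -C$. For any $\nu \in \mcM_1^+([-\kappa,\kappa])$, splitting the domain into the open interval (where the integrand equals exactly $-C$) and the two endpoints (where it is at least $-C$) yields $\int [x^2/4 - U^\mu(x)]\,\rd\nu \geq -C\cdot \nu([-\kappa,\kappa]) = -C$. Hence the first-order term in the expansion is non-negative, so $I(\nu) \geq I(\mu)$ with equality iff $\nu = \mu$; this makes $\mu$ the unique minimizer, and Theorem~\ref{thm:properties_inf_I}-$(ii)$ concludes. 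The principal subtlety is not in the convex comparison itself but in the boundary analysis: because the hypothesis only concerns the \emph{open} interval $(-\kappa,\kappa)$, both the absence of atoms at $\pm\kappa$ and the one-sided inequality there must be derived separately, each relying on elementary properties of logarithmic potentials (divergence at atoms and upper semi-continuity).
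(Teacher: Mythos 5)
Your overall strategy coincides with the paper's: expand $I$ quadratically around $\mu$, use the positivity of $-\Sigma$ on zero-mass compactly supported signed measures (Lemma~\ref{lemma:nonc_entropy_pos}), and then invoke the uniqueness part of Theorem~\ref{thm:properties_inf_I}-$(ii)$. The paper simply applies the expansion to the single test measure $\nu = \mu_\kappa^\star$, while you attempt the stronger claim that $\mu$ minimizes $I$ over \emph{all} $\nu \in \mcM_1^+([-\kappa,\kappa])$. That generalization is where a genuine sign error creeps in.

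The issue is your boundary inequality. Upper semi-continuity of $U^\mu$ at $\kappa$ says $\limsup_{x \uparrow \kappa} U^\mu(x) \leq U^\mu(\kappa)$; the left side equals $\kappa^2/4 + C$ by hypothesis, so you actually get $U^\mu(\kappa) \geq \kappa^2/4 + C$, i.e.\ $\kappa^2/4 - U^\mu(\kappa) \leq -C$, which is the \emph{reverse} of what you wrote. With the corrected inequality your ``splitting the domain'' step gives $\int [x^2/4 - U^\mu] \, \rd\nu \leq -C$ rather than $\geq -C$ whenever $\nu$ places mass at $\pm\kappa$, so the linear term is not manifestly nonnegative and the argument as stated breaks down.

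Fortunately the fix is immediate and, in fact, makes the boundary analysis superfluous: any $\nu$ with $\nu(\{\pm\kappa\})>0$ has $\Sigma(\nu) = -\infty$ and hence $I(\nu) = +\infty \geq I(\mu)$ — a case you already dispatch. For the remaining $\nu$, atomless at $\pm\kappa$, the endpoints carry no $\nu$-mass, so $\int [x^2/4 - U^\mu]\,\rd\nu = -C$ exactly and the linear term vanishes; no USC estimate is needed. (The observation that $\mu$ itself has no atom at $\pm\kappa$, which you prove correctly from the divergence of the potential, remains useful since it justifies $\int [x^2/4 - U^\mu]\,\rd\mu = -C$.) Alternatively, you can avoid the ``all $\nu$'' route entirely by testing against $\nu = \mu_\kappa^\star$ only, as the paper does: $\mu_\kappa^\star$ has no atoms because $I(\mu_\kappa^\star) < \infty$, so the linear term reduces to the open interval where the hypothesis makes it the integral of a constant against the zero-mass measure $\mu_\kappa^\star - \mu$, hence zero, and $I(\mu) \leq I(\mu_\kappa^\star)$ follows directly.
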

\noindent
Note that Lemma~\ref{lemma:log_pot_mukappa} can also be seen as a consequence of Theorem~3.3 of Chapter~I of \cite{saff2013logarithmic}. 
We give here a short proof for the sake of completeness.
\begin{proof}[Proof of Lemma~\ref{lemma:log_pot_mukappa} --]
    For any $\sigma, \nu$ real signed measures, we have (recall eq.~\eqref{eq:def_I}) 
    \begin{equation*}
        I(\sigma + \nu) - I(\sigma) = -\frac{1}{2} \Sigma(\nu) + \int \nu(\rd x) \left[\frac{x^2}{4} - \int \sigma(\rd y) \log |x - y|\right]. 
    \end{equation*}
    Applying this formula to $\sigma = \mu$ and $\nu = \mu_\kappa^\star - \mu$, we reach:
    \begin{align*}
        I(\mu_\kappa^\star) - I(\mu) &= -\frac{1}{2} \Sigma(\mu_\kappa^\star - \mu) + \int (\mu_\kappa^\star - \mu)(\rd x) \left[\frac{x^2}{4} - \int \mu(\rd y) \log |x - y|\right], \\ 
         &\aeq -\frac{1}{2} \Sigma(\mu_\kappa^\star - \mu), \\ 
         &\bgeq 0.
    \end{align*}
    We used eq.~\eqref{eq:log_pot_mukappa_hyp} in $(\rm a)$ and the fact that $\mu_\kappa^\star$ has no atom since $I(\mu_\kappa^\star) < \infty$~\citep{arous1997large}, so we can restrict the integral 
    to $x \in (-\kappa,\kappa)$.
    In $(\rm b)$ we used the following classical property of the non-commutative entropy $\Sigma(\mu)$, 
    which can be found e.g.\ as Proposition~II.2.2 in \cite{faraut2014logarithmic}.
    \begin{lemma}\label{lemma:nonc_entropy_pos}
        Let $\mu$ be a signed measure on $\bbR$ with compact support, and such that $\int_\bbR \mu(\rd x) = 0$. 
        Let $\hat{\mu}(t) \coloneqq \int \mu(\rd x) \, e^{i t x}$ be the characteristic function of $\mu$.
        Then 
        \begin{equation*}
            \Sigma(\mu) \coloneqq \int \mu(\rd x) \mu(\rd y) \log |x - y| = - \int_0^\infty \frac{|\hat{\mu}(t)|^2}{t} \rd t.
        \end{equation*}
        In particular, $\Sigma(\mu) \leq 0$.
    \end{lemma}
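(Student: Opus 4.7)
The plan is to exhibit a Fourier-analytic representation of the logarithm kernel, after which the identity becomes an algebraic manipulation combined with Fubini. The starting ingredient is the classical Frullani integral
$$\int_0^\infty \frac{\cos(ut) - \cos(vt)}{t} \rd t = \log \frac{|v|}{|u|},$$
valid for all nonzero reals $u, v$, convergent in the improper Riemann sense. Applied with $u = r$ a fixed reference and $v = x - y$, this gives a representation of $\log|x-y|$ modulo the constant $\log|r|$.

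First, I would substitute this representation into $\Sigma(\mu)$ and integrate termwise against $\mu(\rd x)\mu(\rd y)$. The hypothesis $\int \rd\mu = 0$ kills two contributions simultaneously: the constant $\log|r| \cdot (\int \rd\mu)^2 = 0$, and for each $t > 0$ the term $\cos(tr) \cdot (\int \rd\mu)^2 = 0$. This leaves
$$\Sigma(\mu) = -\int_0^\infty \frac{\rd t}{t} \int \mu(\rd x) \mu(\rd y) \, \cos(t(x-y)).$$
Second, the product-to-sum identity $\cos(t(x-y)) = \cos(tx)\cos(ty) + \sin(tx)\sin(ty)$ together with the definition of $\hat{\mu}$ yield
$$\int \mu(\rd x)\mu(\rd y) \cos(t(x-y)) = \Bigl(\int \cos(tx) \mu(\rd x)\Bigr)^{\!2} + \Bigl(\int \sin(tx) \mu(\rd x)\Bigr)^{\!2} = |\hat{\mu}(t)|^2,$$
which is the claimed formula; the sign statement $\Sigma(\mu) \leq 0$ is then immediate.

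The main obstacle is justifying the exchange of integrals, since $|\hat{\mu}(t)|^2/t$ need not be absolutely integrable. Near $t=0$ there is no difficulty: since $\mu$ has compact support, $\hat{\mu}$ is entire, and $\hat{\mu}(0) = \int \rd\mu = 0$ forces $|\hat{\mu}(t)|^2 = O(t^2)$, so the integrand is bounded at the origin. At infinity $|\hat{\mu}(t)|$ is merely bounded, and the tail integral can diverge; in that case both sides equal $-\infty$, which is still consistent. The cleanest way to make the exchange rigorous is by mollification: replace $\mu$ by $\mu_\sigma \coloneqq \mu * \gamma_\sigma$, where $\gamma_\sigma$ is a Gaussian of variance $\sigma^2$. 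Then $\widehat{\mu_\sigma}(t) = \hat{\mu}(t) e^{-\sigma^2 t^2/2}$ decays rapidly, Fubini applies unconditionally to $\mu_\sigma$, and the identity holds at level $\sigma > 0$. Passing $\sigma \downarrow 0$ on the Fourier side is monotone convergence; on the $\Sigma$ side one splits the kernel $\log|x-y|$ near the diagonal into a bounded piece and a logarithmically singular piece, using compactness of $\mathrm{supp}(\mu)$ and the zero-mass condition to control the latter. This last convergence is the delicate technical step, and it is precisely the content of Proposition~II.2.2 of \cite{faraut2014logarithmic}, to which one could alternatively defer directly.
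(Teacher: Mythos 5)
The paper itself does not prove this lemma; it simply cites Proposition~II.2.2 of \cite{faraut2014logarithmic}, so there is no ``paper proof'' to compare against. Your argument is, in outline, the standard Fourier-analytic proof of that proposition: the Frullani representation $\log|x-y| = \log|r| + \int_0^\infty \frac{\cos(rt)-\cos((x-y)t)}{t}\,\rd t$, the observation that the zero-mass condition $\hat\mu(0)=0$ both kills the two constant contributions and makes $|\hat\mu(t)|^2/t$ bounded near $t=0$, and the identity $\int\!\int \cos(t(x-y))\,\mu(\rd x)\mu(\rd y) = |\hat\mu(t)|^2$. All of this is correct, and you have rightly identified the only genuinely delicate point: the interchange of the $t$-integral with the $\mu\otimes\mu$-integral, since the Frullani integral is only conditionally convergent. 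Your Gaussian-mollification route would work, but a marginally lighter alternative worth noting is to truncate the $t$-integral at a finite $T$ and apply Fubini there (the truncated kernel is bounded on $\supp(\mu)^2$); one then sends $T\to\infty$ using monotone convergence on the Fourier side (the integrand $|\hat\mu(t)|^2/t\ge 0$) and dominated convergence on the spatial side, the latter justified by the uniform-in-$T$ bound $\bigl|\int_0^T \frac{\cos(at)-\cos(bt)}{t}\rd t\bigr| \le |\log|b/a|| + 2\sup_{z>0}|\mathrm{Ci}(z)|$, which is integrable against $|\mu|\otimes|\mu|$ on a compact set. This avoids the splitting of the logarithmic singularity you mention at the end, and also covers the case $\Sigma(\mu)=-\infty$ automatically. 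Either way, your proposal is sound.
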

    \noindent
    This shows that $I(\mu) \leq I(\mu_\kappa^\star) = \inf_{\mu \in \mcM_1^+([-\kappa,\kappa])} I(\mu)$. By $(ii)$ of Theorem~\ref{thm:properties_inf_I}, $\mu = \mu_\kappa^\star$.
\end{proof}
\noindent
Thanks to Lemma~\ref{lemma:log_pot_mukappa}, we can give an exact formula for $\mu_\kappa^\star$ by exhibiting a candidate measure satisfying eq.~\eqref{eq:log_pot_mukappa_hyp}, which in turns implies the exact formula~\eqref{eq:to_prove_var_1st_mom} for $E_\kappa$.
\begin{proposition}
    \label{prop:mukappa_value}
    Let $\kappa \in (0,2]$.
    Recall that $E_\kappa = \inf_{\mu \in \mcM_1^+([-\kappa,\kappa])} I(\mu)$ is reached in a unique measure $\mu_\kappa^\star$.
    Let $\mu_\kappa(\rd x) \coloneqq \rho_\kappa(x) \rd x$ be given, for $x \in (-\kappa,\kappa)$, by:
    \begin{equation}\label{eq:def_rhokappa}
        \rho_\kappa(x) \coloneqq \frac{4+\kappa^2-2x^2}{4 \pi \sqrt{\kappa^2 - x^2}}. 
    \end{equation}
    Then $\mu_\kappa = \mu_\kappa^\star$, and we have eq.~\eqref{eq:to_prove_var_1st_mom}:
    \begin{equation*}
        E_\kappa = I(\mu_\kappa^\star) = - \frac{\kappa^4}{128} + \frac{\kappa^2}{8} - \frac{1}{2} \log \frac{\kappa}{2} - \frac{3}{8}.
    \end{equation*}
\end{proposition}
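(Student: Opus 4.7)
The plan is to use Lemma~\ref{lemma:log_pot_mukappa} as an optimality criterion: it suffices to exhibit a probability measure $\mu_\kappa \in \mcM_1^+([-\kappa,\kappa])$ and a constant $C_\kappa \in \bbR$ satisfying the log-potential identity $\int \mu_\kappa(\rd y)\log|x-y| = x^2/4 + C_\kappa$ for all $x \in (-\kappa,\kappa)$, and then evaluate $I(\mu_\kappa)$ by substituting this identity back into~\eqref{eq:def_I}. First I would verify that the candidate density $\rho_\kappa$ from~\eqref{eq:def_rhokappa} is a probability density on $[-\kappa,\kappa]$: positivity follows from $4+\kappa^2-2x^2 \geq 4-\kappa^2 \geq 0$ on the support (using $\kappa \leq 2$), and the normalization reduces to the standard identities $\int_{-\kappa}^\kappa \rd x/\sqrt{\kappa^2-x^2} = \pi$ and $\int_{-\kappa}^\kappa x^2\,\rd x/\sqrt{\kappa^2-x^2} = \pi\kappa^2/2$.

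Next I would verify the log-potential identity itself. Differentiating in $x$ reduces it to the Tricomi-type equation
\[
\mathrm{PV}\int_{-\kappa}^\kappa \frac{\rho_\kappa(y)}{x-y}\,\rd y = \frac{x}{2}, \qquad x \in (-\kappa,\kappa).
\]
The algebraic decomposition $4+\kappa^2-2y^2 = (4+\kappa^2-2x^2)+2(x-y)(x+y)$ splits the left-hand side into a term proportional to the vanishing principal value $\mathrm{PV}\int_{-\kappa}^\kappa \rd y/[(x-y)\sqrt{\kappa^2-y^2}] = 0$ (a classical finite Hilbert transform identity) and the ordinary integral $(2\pi)^{-1}\int_{-\kappa}^\kappa (x+y)\,\rd y/\sqrt{\kappa^2-y^2} = x/2$, where the $y$-term drops by oddness. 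Lemma~\ref{lemma:log_pot_mukappa} then forces $\mu_\kappa = \mu_\kappa^\star$.

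Finally I would evaluate $E_\kappa$. Plugging the log-potential identity into~\eqref{eq:def_I} collapses the double integral and yields the compact expression
\[
I(\mu_\kappa) = \frac{1}{8}\int \rho_\kappa(x)\,x^2\,\rd x - \frac{C_\kappa}{2} - \frac{3}{8}.
\]
The moment integral is direct from $\int_{-\kappa}^\kappa x^4\,\rd x/\sqrt{\kappa^2-x^2} = 3\pi\kappa^4/8$, yielding $\int \rho_\kappa(x)\,x^2\,\rd x = \kappa^2/2 - \kappa^4/16$. The constant $C_\kappa$ I would obtain by evaluating the log-potential identity at $x=0$, giving $C_\kappa = \int \rho_\kappa(y)\log|y|\,\rd y$. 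After rescaling $y=\kappa z$ and substituting $z = \sin\theta$, this reduces to the tabulated $\int_0^{\pi/2}\log\sin\theta\,\rd\theta = -(\pi/2)\log 2$ and $\int_0^{\pi/2}\sin^2\theta\log\sin\theta\,\rd\theta = -(\pi/4)\log 2 + \pi/8$ (the latter via integration by parts after $\sin^2\theta = (1-\cos 2\theta)/2$), eventually yielding $C_\kappa = \log(\kappa/2) - \kappa^2/8$. Substituting back produces~\eqref{eq:to_prove_var_1st_mom}.

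The main obstacle is pinning down $C_\kappa$: the algebraic form of $\rho_\kappa$ makes the log-potential identity transparent once the right splitting is used, but $C_\kappa$ is the only place where the nontrivial $\log(\kappa/2)$ dependence of $E_\kappa$ enters, and isolating it requires an explicit logarithmic integral. A convenient sanity check is $\kappa = 2$, for which $C_2 = -1/2$ and one recovers the classical semicircle potential $\int \rho_{\sci}(y)\log|x-y|\,\rd y = x^2/4 - 1/2$.
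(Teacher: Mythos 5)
Your proof is correct, uses the same optimality criterion (Lemma~\ref{lemma:log_pot_mukappa}), and reaches the same intermediate values ($\int \mu_\kappa(\rd x)\,x^2 = \kappa^2(8-\kappa^2)/16$ and $C_\kappa = \log(\kappa/2) - \kappa^2/8$), but it differs from the paper in how the two nontrivial computations are carried out. To verify the singular integral equation $\mathrm{PV}\int \rho_\kappa(y)/(x-y)\,\rd y = x/2$, the paper computes the Stieltjes transform $G_\kappa(z)$ for $\Im z > 0$ by a contour integral and the residue theorem, then takes the boundary real part; you instead use the algebraic split $4+\kappa^2-2y^2 = (4+\kappa^2-2x^2) + 2(x-y)(x+y)$ together with the classical finite Hilbert transform identity $\mathrm{PV}\int_{-\kappa}^\kappa \rd y /[(x-y)\sqrt{\kappa^2-y^2}] = 0$, which is entirely real-variable and more elementary. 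For the constant $C_\kappa$, the paper pins it down from the $x \to \infty$ asymptotics of the log-potential (using that $G_\kappa(z)$, already computed, is valid for all $z$ off $[-\kappa,\kappa]$), whereas you evaluate the log-potential identity at $x = 0$ and reduce $\int \rho_\kappa(y)\log|y|\,\rd y$ to the tabulated integrals $\int_0^{\pi/2}\log\sin\theta\,\rd\theta$ and $\int_0^{\pi/2}\sin^2\theta\log\sin\theta\,\rd\theta$. The paper's route yields the full Stieltjes transform as a byproduct (reused elsewhere, e.g.\ to get the second moment by residues), while yours trades that reusable object for a shorter, complex-analysis-free derivation; both are valid and yield identical formulas, as your $\kappa = 2$ sanity check confirms.
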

\noindent
\begin{proof}[Proof of Proposition~\ref{prop:mukappa_value}] 
    By Lemma~\ref{lemma:log_pot_mukappa}, to show that $\mu_\kappa = \mu_\kappa^\star$ it is enough to show that eq.~\eqref{eq:log_pot_mukappa_hyp} 
    holds for $\mu_\kappa$. 
    Since $U(x) \coloneqq \int \mu_\kappa(\rd y) \log |x-y|$ satisfies (in the sense of distributions)
    \begin{equation*}
        U'(x) = \textrm{P.V.} \int \frac{\rho_\kappa(y)}{x-y} \rd y,
    \end{equation*}
    it is enough to check for any $x \in (-\kappa,\kappa)$:
    \begin{equation}\label{eq:to_show_rhokappa}
        \textrm{P.V.}\int_{-r}^r \frac{\rho_\kappa(y)}{x-y} \rd y = \frac{x}{2}, 
    \end{equation}
    where $\textrm{P.V.}$ refers to the principal value. 
    Notice that
    \begin{equation*}
        \textrm{P.V.}\int_{-r}^r \frac{\rho_\kappa(y)}{x-y} \rd y = \lim_{\eps \downarrow 0} \Re\left\{\int_{-r}^r \frac{\rho_\kappa(y)}{x+i\eps-y} \rd y\right\}.
    \end{equation*}
    We compute $G_\kappa(z) \coloneqq \int_{-r}^r \rho_\kappa(y)/(z-y) \rd y$ for all $z$ such that $\Im(z) > 0$.
    Changing variables to $y = \kappa \cos \theta$, and then to $\zeta = e^{i \theta}$, we have:
    \begin{align*}
        G_\kappa(z) &= \frac{1}{4\pi} \int_0^\pi \frac{4+\kappa^2 -2 \kappa^2 \cos^2 \theta}{z - \kappa \cos \theta} \, \rd \theta, \\ 
        &= \frac{1}{8\pi} \int_{-\pi}^\pi \frac{4+\kappa^2 -2 \kappa^2 \cos^2 \theta}{z - \kappa \cos \theta} \, \rd \theta, \\
        &= \frac{1}{8 i\pi} \oint_{|\zeta| = 1} \frac{\kappa^2 \zeta^4 - 8 \zeta^2 + \kappa^2}{\zeta^2(\kappa\zeta^2 -2 \zeta z + \kappa)} \rd \zeta.
    \end{align*}
    The denominator has three poles: $\{0, \zeta_-, \zeta_+\}$, where $\zeta_{\pm} \coloneqq (z \pm \sqrt{z^2-\kappa^2})/\kappa$ (we choose the branch of the square root such that $\Im[\sqrt{w}] \geq 0$ for all $w \in \bbC$).
    Since $\Im(z) > 0$, it is easy to show that $|\zeta_-| < 1 < |\zeta_+|$.
    We then apply the residue theorem and find:
    \begin{equation}\label{G_rhokappa}
        G_\kappa(z) = \frac{z}{2} + \frac{4+\kappa^2-2z^2}{4\sqrt{z^2-\kappa^2}}.
    \end{equation}
    Taking $\lim_{\eps \to 0} \Re[G_\kappa(x+i \eps)]$ for $|x| < \kappa$ yields eq.~\eqref{eq:to_show_rhokappa}, and thus $\mu_\kappa = \mu_\kappa^\star$. 
    It remains to compute $E_\kappa = I(\mu_\kappa^\star)$.
    One can use the same arguments as above (based on the residue theorem) to show 
    \begin{equation}
        \label{eq:int_mukappa_x2}
        \int \mu_\kappa^\star(\rd x) \, x^2 = \frac{\kappa^2(8-\kappa^2)}{16}. 
    \end{equation}
    By $(iii)$ of Theorem~\ref{thm:properties_inf_I} and Lemma~\ref{lemma:log_pot_mukappa},
    we then have for all $x \in (-\kappa,\kappa)$:
    \begin{equation}
        \label{eq:Ekappa_1}
        E_\kappa = \frac{\kappa^2(8-\kappa^2)}{128} - \frac{3}{8} + \frac{x^2}{8} - \frac{1}{2} \int_{-\kappa}^\kappa \rho_\kappa(y) \, \log |x-y| \, \rd y.  
    \end{equation}
    Notice that eq.~\eqref{G_rhokappa} is valid for all $z$ with $\Im(z) > 0$. In particular, we reach from it, that for all $x \geq 0$:
    \begin{equation*}
        \textrm{P.V.}\int_{-r}^r \frac{\rho_\kappa(y)}{x-y} \rd y =
        \begin{dcases}
            \frac{x}{2} & \textrm{ if } x < \kappa, \\ 
            \frac{x}{2} + \frac{4+\kappa^2-2x^2}{4\sqrt{x^2-\kappa^2}} & \textrm{ if } x > \kappa.
        \end{dcases}
    \end{equation*}
    Since this is an integrable function, we have
    \begin{equation*}
        \int_{-\kappa}^\kappa \rho_\kappa(y) \, \log |x-y| \, \rd y =
        \begin{dcases}
            \frac{x^2}{4} + C & \textrm{ if } x \leq \kappa, \\ 
            \frac{x^2}{4} + C - \frac{x\sqrt{x^2-\kappa^2}}{4} - \log\left(\frac{x-\sqrt{x^2-\kappa^2}}{\kappa}\right) & \textrm{ if } x > \kappa.
        \end{dcases}
    \end{equation*}
    Using that $\int_{-\kappa}^\kappa \rho_\kappa(y) \log |x-y| \rd y - \log x \to 0$ as $x \to \infty$ yields $C = \log(\kappa/2) - \kappa^2/8$.
    Eq.~\eqref{eq:Ekappa_1} becomes:
    \begin{align*}
        E_\kappa = I(\mu_\kappa^\star) &= \frac{\kappa^2(8-\kappa^2)}{128} - \frac{3}{8}  - \frac{1}{2} \left[\log \frac{\kappa}{2} - \frac{\kappa^2}{8}\right], \\ 
        &= - \frac{\kappa^4}{128} + \frac{\kappa^2}{8} - \frac{1}{2} \log \frac{\kappa}{2} - \frac{3}{8},
    \end{align*}
    which ends the proof.
\end{proof}
\myskip
\textbf{Remark: predicting the form of $\rho_\kappa$ --} In order to predict the density $\rho_\kappa$ given by eq.~\eqref{eq:def_rhokappa}, 
we used an argument based on a heuristic application of Tricomi's theorem~\citep{tricomi1985integral}, which states that if eq.~\eqref{eq:to_show_rhokappa} is satisfied 
and $\rho_\kappa$ is supported on $[-\kappa,\kappa]$, then 
\begin{equation}\label{eq:tricomi}
    \rho_\kappa(x) = \frac{1}{\pi \sqrt{\kappa^2-x^2}}\left[C - \frac{1}{\pi} \textrm{P.V.} \int_{-\kappa}^\kappa \frac{\sqrt{\kappa^2-y^2}}{x-y} \times \left(\frac{y}{2}\right) \rd y\right],
\end{equation}
for some constant $C$ chosen to ensure $\int_{-\kappa}^\kappa \rho_\kappa(x) \rd x = 1$.
A careful evaluation of eq.~\eqref{eq:tricomi} based on the residue theorem yields eq.~\eqref{eq:def_rhokappa}.

\subsection{Limiting spectral distribution of a norm-constrained Gaussian matrix}\label{subsec:proof_lsd_constrained_GOE}

Theorem~\ref{thm:lsd_constrained_GOE} follows as a direct consequence of Proposition~\ref{prop:ldp_empirical_measure_conditioned} and Proposition~\ref{prop:mukappa_value}.
Indeed, let $\bbP_\kappa$ be the law of $\bW \sim \GOE(d)$ conditioned on $\|\bW\|_\op \leq \kappa$, and denote $\mu_\bW$ the empirical spectral distribution of $\bW$.
Then for any $\delta > 0$, if $B(\mu_\kappa^\star,\delta) \subseteq \mcM_1^+([-\kappa,\kappa])$ is the open ball of radius $\delta$ centered in $\mu_\kappa^\star$ for the distance of eq.~\eqref{eq:dudley}, 
\begin{align*}
    \limsup_{d \to \infty} \frac{1}{d^2} \log \bbP_\kappa[\mu_\bW \notin B(\mu_\kappa^\star, \delta)] \leq - \inf_{\mu \in B(\mu_\kappa^\star, \delta)^c} [J_\kappa(\mu)],
\end{align*}
with 
\begin{align*}
    J_\kappa(\mu) \coloneqq I(\mu) - I(\mu_\kappa^\star),
\end{align*}
for $I$ given in eq.~\eqref{eq:def_I}.
Since $J_\kappa$ is a good rate function (see Proposition~\ref{prop:ldp_empirical_measure_conditioned}) and has a unique minimizer (cf.\ $(ii)$ of Theorem~\ref{thm:properties_inf_I}), 
$\inf_{\mu \in B(\mu_\kappa^\star, \delta)^c} [J_\kappa(\mu)] > 0$. Therefore, by the Borel-Cantelli lemma,
\begin{align*}
    \bbP[\limsup_{d \to \infty} d(\mu_\bW, \mu_\kappa^\star) \leq \delta] = 1,
\end{align*}
which ends the proof by taking the limit $\delta \to 0$.
$\qed$

\section{The satisfiability region}\label{sec:2nd_moment}
Section~\ref{subsec:proof_properties_tau2} is dedicated to studying the properties of the threshold $\tau_2(\kappa)$. 
The proof of Theorem~\ref{thm:second_moment}, which is the main goal of this section, is outlined in Section~\ref{subsec:reduction_2nd_moment_ub},
and details are given in the remainder of Section~\ref{sec:2nd_moment}.

\subsection{Properties of the satisfiability bound}\label{subsec:proof_properties_tau2}

We prove here Proposition~\ref{prop:tau2}, which follows from the following lemma.
\begin{lemma}\label{lemma:tau2_prop}
    Define, for any $\kappa > 0$, $\bartau(\kappa) \coloneqq \min_{\eta > 0} \ttau(\eta, \kappa)$.
    Then $\bartau(\kappa) = \ttau(\eta^\star(\kappa),\kappa)$,
where
$\eta^\star(\kappa)$ is the unique value of $\eta > 0$ such that:
\begin{align}
    \label{eq:etastar}
      (1+\eta) \tau_1(\kappa) &=
\frac{1+\delta_\eta^2}{2(1-\delta_\eta^2)^2}
    + \left[\frac{\delta_\eta(1+6\delta_\eta+3\delta_\eta^2+2\delta_\eta^3)}{(1-\delta_\eta^2)^3(1-\delta_\eta)}\right] \kappa \\
    &
    + \left[ \frac{2(1+\delta_\eta)^5}{(1-\delta_\eta^2)^4} - \frac{(1+3 \delta_\eta^2)}{4(1-\delta_\eta^2)^3}\right] \kappa^2 
      + \frac{\kappa^4(1+3\delta_\eta^2)}{32(1-\delta_\eta^2)^3}.
\end{align}
Moreover, $\kappa \mapsto \bartau(\kappa)$ is a continuous function of $\kappa$.
\end{lemma}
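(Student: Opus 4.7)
The plan is to write $\ttau(\eta,\kappa) = \max(f_1(\eta,\kappa), f_2(\eta,\kappa))$ with $f_1(\eta,\kappa) \coloneqq (1+\eta)\tau_1(\kappa)$ and $f_2(\eta,\kappa)$ the second expression inside the max of~\eqref{eq:def_ttau}, and to show that (for fixed $\kappa \in (0,2)$) $f_1$ is strictly increasing and $f_2$ is strictly decreasing in $\eta$. The standard argument for $\min_\eta \max(f_1,f_2)$ of two such monotone functions then locates the minimum at the unique crossing $\eta^\star(\kappa)$, which is characterized by~\eqref{eq:etastar}.

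First I would analyze $\delta_\eta$. The left-hand side of~\eqref{eq:def_delta_eta} is the binary entropy at $(1+\delta)/2$, a smooth function strictly decreasing from $\log 2$ at $\delta = 0$ to $0$ at $\delta = 1$; the right-hand side is smooth and strictly increasing from $0$ to $\log 2$ as $\eta$ traverses $(0,\infty)$. The implicit function theorem then yields that $\eta \mapsto \delta_\eta$ is well-defined, $C^\infty$, strictly decreasing, with $\delta_\eta \to 1$ as $\eta \to 0^+$ and $\delta_\eta \to 0$ as $\eta \to \infty$.

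Next I would verify the monotonicity of $f_1$ and $f_2$. The claim for $f_1$ is immediate from $\tau_1(\kappa) > 0$ on $(0,2)$, together with the limits $f_1(0^+, \kappa) = \tau_1(\kappa)$ and $f_1(\eta, \kappa) \to \infty$ as $\eta \to \infty$. Since $\delta_\eta$ is strictly decreasing in $\eta$, showing that $f_2$ is strictly decreasing in $\eta$ reduces to showing that $f_2$, viewed as a function of $\delta \in (0,1)$ with $\kappa$ fixed, is strictly increasing. This amounts to a direct computation on each of the four terms: the constant term, the coefficient of $\kappa$, and the coefficient of $\kappa^4$ are manifestly increasing in $\delta$ after simplifying the ratios; the coefficient of $\kappa^2$ requires more care because of the subtraction, so I would compute its derivative in $\delta$ explicitly and bound its numerator to confirm positivity on $[0,1]$. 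The asymptotics $f_2 \to \infty$ as $\eta \to 0^+$ (from the vanishing $(1-\delta_\eta^2)$ denominators) and $f_2 \to \tfrac{1}{2} + \tfrac{7\kappa^2}{4} + \tfrac{\kappa^4}{32}$ as $\eta \to \infty$ follow by inspection. Verifying monotonicity of the $\kappa^2$ coefficient is where I expect the main technical grind: the other steps are qualitative, while this one depends on a polynomial inequality that must be checked carefully.

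With these properties in hand, the intermediate value theorem yields a unique $\eta^\star(\kappa) \in (0,\infty)$ such that $f_1(\eta^\star, \kappa) = f_2(\eta^\star, \kappa)$, which is precisely~\eqref{eq:etastar}. A short case analysis shows that $\min_\eta \max(f_1,f_2)$ is attained exactly at $\eta^\star$: for $\eta < \eta^\star$ the max equals $f_2(\eta,\kappa) > f_2(\eta^\star, \kappa)$ by monotonicity of $f_2$, and for $\eta > \eta^\star$ it equals $f_1(\eta,\kappa) > f_1(\eta^\star,\kappa)$ by monotonicity of $f_1$. Hence $\bartau(\kappa) = \ttau(\eta^\star(\kappa), \kappa)$. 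For continuity of $\bartau$, I would apply the implicit function theorem to the $C^1$ equation $f_1 - f_2 = 0$: the partial derivative in $\eta$ at $\eta^\star$ is strictly positive (as the difference of a strictly increasing and a strictly decreasing function of $\eta$), so $\kappa \mapsto \eta^\star(\kappa)$ is continuous, and $\bartau(\kappa) = (1+\eta^\star(\kappa))\tau_1(\kappa)$ is continuous by composition.
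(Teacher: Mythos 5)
Your proposal is correct and takes essentially the same route as the paper's proof: both decompose $\ttau$ as $\max(f_\kappa, g_\kappa)$, show $\delta_\eta$ (hence $g_\kappa$) is smooth and strictly decreasing while $f_\kappa$ is strictly increasing, compare boundary behavior, and locate the minimum at the unique crossing. You are a bit more explicit than the paper in flagging the $\kappa^2$-coefficient check and in invoking the implicit function theorem for continuity, but the underlying argument is the same.
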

\noindent
Recall that $\tau_2(\kappa) = \min_{u \in [0,\kappa]} \bartau(u)$, as defined in Lemma~\ref{lemma:tau2_prop}, 
so that $\kappa \mapsto \tau_2(\kappa)$ is clearly continuous and non-increasing.
Moreover, solving eq.~\eqref{eq:etastar} gives a simple way to numerically evaluate $\kappa \in [0,2] \mapsto \bartau(\kappa)$, which then yields the values of $\tau_2$.

\begin{proof}[Proof of Lemma~\ref{lemma:tau2_prop}]
Let 
\begin{equation*}
    \begin{dcases}
    f_\kappa(\eta) &\coloneqq (1+\eta) \tau_1(\kappa), \\
    g_\kappa(\eta) &\coloneq \frac{1+\delta_\eta^2}{2(1-\delta_\eta^2)^2}
    + \left[\frac{\delta_\eta(1+6\delta_\eta+3\delta_\eta^2+2\delta_\eta^3)}{(1-\delta_\eta^2)^3(1-\delta_\eta)}\right] \kappa \\
    &
    + \left[ \frac{2(1+\delta_\eta)^5}{(1-\delta_\eta^2)^4} - \frac{(1+3 \delta_\eta^2)}{4(1-\delta_\eta^2)^3}\right] \kappa^2 
      + \frac{\kappa^4(1+3\delta_\eta^2)}{32(1-\delta_\eta^2)^3}.
    \end{dcases}
\end{equation*}
Recall that $\delta_\eta$ is defined as the unique solution to $H[(1+\delta)/2]/\log 2 = \eta / (1+\eta)$, 
with $H(p) = -p \log p - (1-p) \log(1-p)$.
If $G(\delta) \coloneqq H[(1+\delta)/2]/\log 2$, then
$G$ is smooth and strictly decreasing on $[0,1]$. 
So $\delta_\eta = G^{-1}[\eta / (1+\eta)]$ is a smooth and strictly decreasing function of $\eta > 0$.
It is then immediate by elementary arguments (it is easy to see that each of the $\kappa$-coefficients of $g_\kappa(\eta)$ is a strictly increasing function of $\delta$, similarly to what is done e.g.\ in eq.~\eqref{eq:fprime_q}) that 
$g_\kappa(\eta)$ is also a smooth and strictly decreasing function of $\eta$.

\myskip
Moreover, we have $g_\kappa(0^+) = \infty$, $f_\kappa(0^+) = \tau_1(\kappa) < \infty$, and $g_\kappa(\infty) < \infty$, $f_\kappa(\infty) = \infty$.
It is then elementary to show that for any $\kappa > 0$, $\min_{\eta > 0} \ttau(\eta, \kappa) = \min_{\eta > 0} \max\{f_\kappa(\eta), g_\kappa(\eta)\}$ 
is reached in a unique $\eta^\star(\kappa)$, such that $f_\kappa(\eta^\star(\kappa)) = g_\kappa(\eta^\star(\kappa))$, and that 
$\eta^\star(\kappa)$ is a continuous function of $\kappa$, which implies that $\bartau(\kappa) = f_\kappa(\eta^\star(\kappa))$ is also continuous.
\end{proof}

\subsection{Reduction to a second moment upper bound}\label{subsec:reduction_2nd_moment_ub}

The main element of our analysis is the following upper bound.
\begin{proposition}\label{prop:2nd_moment}
    Let $\kappa \in (0, 2]$. Recall the definition of $Z_\kappa$ in eq.~\eqref{eq:def_Zkappa}, and of $\bartau(\kappa)$ in Lemma~\ref{lemma:tau2_prop}.
    Assume that $\tau > \bartau(\kappa)$.
    Then, for $n, d \to \infty$ with $n/d^2 \to \tau$:
    \begin{equation*} 
        \limsup_{d \to \infty} \frac{\EE[Z_\kappa^2]}{\EE[Z_\kappa]^2} \leq L \cdot \left[1 - \frac{\bartau(\kappa)}{\tau}\right]^{-1/2},
    \end{equation*}
    for an absolute constant $L > 0$.
\end{proposition}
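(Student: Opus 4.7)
The plan is a second-moment computation. By sign-flip symmetry, the joint probability
$$\bbP\left[\left\|\sum_i\eps_i\bW_i\right\|_\op\leq \kappa\sqrt n,\ \left\|\sum_i\eps'_i\bW_i\right\|_\op\leq\kappa\sqrt n\right]$$
depends only on the overlap $q=n^{-1}\langle\eps,\eps'\rangle$, since the rescaled sums $\bW_\eps\coloneqq n^{-1/2}\sum_i\eps_i\bW_i$ and $\bW_{\eps'}\coloneqq n^{-1/2}\sum_i\eps'_i\bW_i$ are jointly Gaussian with marginals $\GOE(d)$ and covariance $q$. Denoting this joint probability as $G_d(q,\kappa)$ and $p_d(\kappa)\coloneqq\bbP[\|\bW\|_\op\leq\kappa]$, one gets
$$\frac{\EE[Z_\kappa^2]}{\EE[Z_\kappa]^2}=\frac{1}{2^n}\sum_{k=0}^n\binom{n}{k}\,\frac{G_d(1-2k/n,\kappa)}{p_d(\kappa)^2}.$$
I would split the sum at $|q|=\delta_\eta$, where $\eta>0$ is a free parameter to be optimized at the end; the explicit formula~\eqref{eq:def_delta_eta} is precisely the entropy threshold ensuring $\binom{n}{n(1-q)/2}/2^n\leq 2^{-n\eta/(1+\eta)}$ for all $|q|\geq\delta_\eta$.

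For the small-overlap regime $|q|<\delta_\eta$, the strategy is a discrete Laplace-type analysis. Decomposing $\bW_{\eps'}=q\bW_\eps+\sqrt{1-q^2}\,\bW^\perp$ with $\bW^\perp$ an independent $\GOE(d)$, the map $\bW^\perp\mapsto\|\bW_{\eps'}\|_\op$ is $\sqrt{1-q^2}$-Lipschitz, so a log-Sobolev inequality for $\bW^\perp$ conditioned on $\|\bW_{\eps'}\|_\op\leq\kappa$ should produce a quadratic upper bound $G_d(q,\kappa)\leq p_d(\kappa)^2\exp(\alpha(\kappa,\delta_\eta)\,d^2 q^2)$, uniformly in $|q|\leq\delta_\eta$. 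Inserting the Gaussian asymptotic $\binom{n}{n(1-q)/2}/2^n\sim(\pi n/2)^{-1/2}\exp(-nq^2/2)$ and interpreting the sum as a Riemann integral with step $2/n$ gives a contribution of order $L\cdot(1-2\alpha/\tau)^{-1/2}$, finite precisely when $\tau>2\alpha(\kappa,\delta_\eta)$ and matching the claimed prefactor with $2\alpha$ absorbed in $\bartau(\kappa)$. For the large-overlap regime $|q|\geq\delta_\eta$, the plan is to establish a uniform upper bound $G_d(q,\kappa)\leq\exp(d^2\psi(\eta,\kappa))$, where $\psi(\eta,\kappa)$ has the explicit rational-in-$\delta_\eta$ and polynomial-in-$\kappa$ form of the second entry in the $\max$ of eq.~\eqref{eq:def_ttau}. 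Combining this with the entropy bound, a subexponential $O(n)$ factor for the number of overlap values, and the first-moment asymptotic $p_d(\kappa)^{-2}=\exp(2d^2\tau_1(\kappa)\log 2\,(1+\smallO_d(1)))$ from Theorem~\ref{thm:first_moment}, the large-overlap contribution to the ratio is bounded by $\exp(d^2\log 2\,[\psi(\eta,\kappa)/\log 2+2\tau_1(\kappa)-\tau\eta/(1+\eta)])$; this is $\smallO_d(1)$ whenever $\tau$ exceeds both $(1+\eta)\tau_1(\kappa)$ and the second expression inside $\ttau(\eta,\kappa)$. Taking the infimum over $\eta$ yields $\bartau(\kappa)$.

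The main technical obstacle is the pair of inequalities on $G_d(q,\kappa)$. For the small-overlap bound, the delicate point is to transfer a log-Sobolev inequality from the unconstrained $\GOE(d)$ Gaussian measure to the measure conditioned on the non-smooth convex event $\{\|\bW\|_\op\leq\kappa\}$: conditioning on a convex set preserves log-concavity, so a Bakry--\'Emery-type argument should work, but one must carefully track the log-Sobolev constant's dependence on $(\kappa,\delta_\eta)$ and show it remains bounded uniformly in $d$. For the large-overlap bound, a sharp large-deviation analysis along the lines of~\cite{guionnet2004first} is available only in variational form and is analytically intractable in our regime; the strategy is instead to pass to the spectral decomposition of $q\bW_\eps+\sqrt{1-q^2}\,\bW^\perp$ and combine perturbative/interlacing estimates with crude first-moment bounds. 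The rather intricate closed-form expression for $\psi(\eta,\kappa)$ in eq.~\eqref{eq:def_ttau} is precisely what one obtains after optimizing these bounds --- it is explicit enough to make the numerical evaluation of Proposition~\ref{prop:tau2} tractable, but is very likely not sharp, as is explicitly acknowledged in the discussion of Section~\ref{subsec:discussion}.
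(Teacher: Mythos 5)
Your high-level outline --- expressing $\EE[Z_\kappa^2]/\EE[Z_\kappa]^2$ as $\frac{1}{2^n}\sum_l\binom{n}{l}e^{nG_d(q_l)}$ via sign-flip symmetry, splitting the sum at $|q|=\delta_\eta$, using a discrete Laplace-type estimate for small overlaps, and an entropy estimate for large overlaps --- is exactly the skeleton of the paper's proof. However, you have the two regimes backwards in a way that breaks the argument, and the key technical steps are left as heuristics.

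The essential mix-up: the explicit rational/polynomial expression in $\delta_\eta,\kappa$ (the second entry of the $\max$ in eq.~\eqref{eq:def_ttau}) is \emph{not} a bound on $G_d(q)$ for large $|q|$. It is the paper's bound on $\sup_{|q|\leq\delta}G_d''(q)$, i.e., the small-overlap coefficient in the quadratic bound $G_d(q)\leq\tfrac{1}{2}(1-\gamma)q^2$, which is precisely what feeds the $(1-\bartau/\tau)^{-1/2}$ prefactor through Lemma~\ref{lemma:laplace_discrete}. For $|q|\geq\delta_\eta$ the paper uses only the trivial bound $G_d(q)\leq -\tfrac{1}{n}\log\bbP[\|\bW\|_\op\leq\kappa]$ (drop the joint constraint entirely) together with the sharp first-moment asymptotics of Proposition~\ref{prop:ldp_Wop}; this is why the \emph{first} entry of the $\max$, $(1+\eta)\tau_1(\kappa)$, appears. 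Your proposed large-overlap analysis via~\cite{guionnet2004first} or interlacing estimates is far harder than what is actually used, and would produce the wrong expression; meanwhile your small-overlap claim "$G_d(q,\kappa)\leq p_d(\kappa)^2\exp(\alpha d^2 q^2)$" has $2\alpha$ accounting only for the small-overlap part, so it cannot equal $\bartau(\kappa)$, which is a maximum of two quantities coming from the two regimes.

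Your mechanism for the small-overlap bound --- $\sqrt{1-q^2}$-Lipschitz dependence on $\bW^\perp$ plus a log-Sobolev inequality for the conditioned measure --- is gestured at but not a proof; Lipschitzness gives concentration of $\|q\bW_\eps+\sqrt{1-q^2}\bW^\perp\|_\op$, not directly a quadratic-in-$q$ estimate on the log-probability ratio. The paper instead differentiates $G_d$ twice, decomposes $G_d''(q)=I_1+I_2+I_3$ (eq.~\eqref{eq:d2G_dq2_1}), and bounds each term. This requires two ingredients you do not mention: (a) the limiting spectral distribution of a norm-constrained $\GOE(d)$ matrix, Theorem~\ref{thm:lsd_constrained_GOE}, to evaluate $\lim_d d^{-1}\langle\Tr\bW^2\rangle_{0,\kappa}$ via eq.~\eqref{eq:int_mukappa_x2}, and (b) a log-Sobolev inequality with constant $2(1+q)/d$ for the joint conditioned law $\langle\cdot\rangle_{q,\kappa}$ of two $q$-correlated constrained matrices, proved via Bakry--\'Emery after a convolution regularization (Lemma~\ref{lemma:lsi_Pqkappa}). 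Also note that because the exponent $nG_d(q)$ depends on $d$ and is not pointwise convergent, the "Riemann integral with step $2/n$" heuristic is not immediate; the paper needs the dedicated Lemma~\ref{lemma:laplace_discrete}, which produces the explicit $C/\sqrt{\gamma}$ prefactor under uniform-in-$d$ hypotheses on $F_n''$ and $F_n+H$.
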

\noindent
We will detail the proof of Proposition~\ref{prop:2nd_moment} in Section~\ref{subsec:proof_2nd_moment}, deferring some intermediate results to Section~\ref{subsec:proof_laplace_discrete} and \ref{subsec:lsi}. 
We first show how to deduce Theorem~\ref{thm:second_moment}.

\begin{proof}[Proof of Theorem~\ref{thm:second_moment}]
Since $\tau > \tau_2(\kappa) = \min_{u \in [0,\kappa]} \bartau(u)$ (see Lemma~\ref{lemma:tau2_prop}), 
and $Z_{\kappa'} \leq Z_\kappa$ for any $\kappa' \leq \kappa$, we can assume without loss of generality that $\tau > \bartau(\kappa)$ in order to prove Theorem~\ref{thm:second_moment}.

\myskip
Because the bound on the right-hand side is strictly higher than $1$, Proposition~\ref{prop:2nd_moment} is not strong enough to directly guarantee the existence of solutions with high probability.
This is a recurring challenge in many random constraint satisfaction problems where
$\EE[Z_\kappa^2]/\EE[Z_\kappa]^2 \to C > 1$. 
This occurs e.g.\ 
in the symmetric binary perceptron, the vector analog of our matrix discrepancy task, see \cite{abbe2022proof}, 
and prevents from applying the classical second moment method to get high-probability bounds.
Fortunately, in~\cite{altschuler2023zero} the author develops general techniques on sharp transitions for integer feasibility problems, 
and applies them to show the concentration of the discrepancy $\min_{\eps \in \{\pm 1\}^n} \left\|\sum_{i=1}^n \eps_i \bW_i \right\|_\op$.
\begin{lemma}[Theorem~7 of \cite{altschuler2023zero}]
    \label{lemma:dylan}
    Let $d \geq 1$, and $\bW_1, \cdots, \bW_n \iid \GOE(d)$.
    Let $\disc(\bW_1, \cdots, \bW_n) \coloneqq \min_{\eps \in \{\pm 1\}^n} \left\|\sum_{i=1}^n \eps_i \bW_i \right\|_\op$. 
    Assume that $n/d^2 \to \tau$ as $d \to \infty$. Then there exists $c(\tau) > 0$ such that
    \begin{align*}
        \frac{\EE[\disc(\bW_1, \cdots, \bW_n)]}{\sqrt{\Var[\disc(\bW_1, \cdots, \bW_n)]}} \geq c(\tau) \sqrt{d}.
    \end{align*}
\end{lemma}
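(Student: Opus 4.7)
The plan is to combine Gaussian Lipschitz concentration, which gives a tight upper bound on $\Var[\disc]$, with Theorem~\ref{thm:first_moment}, which provides a matching lower bound on $\EE[\disc]$.

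\myskip
\textbf{Variance via Gaussian Poincar\'e --} First I would realize $\disc$ as a Lipschitz function of a standard Gaussian. Parametrize each $\bW_i$ as a linear function of $\bg_i \in \bbR^{d(d+1)/2}$ with $(\bW_i)_{\alpha\beta} = \sqrt{(1+\delta_{\alpha\beta})/d}\, g_{i,\alpha\beta}$ for $\alpha \leq \beta$, and collect $\bg = (\bg_1, \ldots, \bg_n)$. For any fixed $\eps \in \{\pm 1\}^n$, $\|\sum_i \eps_i \bW_i\|_\op \leq \|\sum_i \eps_i \bW_i\|_F$, and a one-line Cauchy--Schwarz estimate yields
\begin{equation*}
    \left\|\sum_{i=1}^n \eps_i (\bW_i(\bg_i) - \bW_i(\bg'_i))\right\|_F^2 \leq \frac{2n}{d} \|\bg - \bg'\|^2.
\end{equation*}
Thus each map $\bg \mapsto \|\sum_i \eps_i \bW_i\|_\op$ is $\sqrt{2n/d}$-Lipschitz, and the pointwise minimum over $\eps$ preserves this constant. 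The Gaussian Poincar\'e inequality then yields $\Var[\disc] \leq 2n/d$.

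\myskip
\textbf{Mean bound via the first-moment threshold --} Since $\kappa \mapsto \tau_1(\kappa)$ in eq.~\eqref{eq:tau_1st_moment} is continuous and strictly decreasing on $(0,2]$ with $\tau_1(2)=0$ and $\tau_1(\kappa) \to \infty$ as $\kappa \downarrow 0$, for every $\tau > 0$ there is a unique $\kappa^\star(\tau) \in (0,2)$ with $\tau_1(\kappa^\star(\tau))=\tau$. For any $\kappa < \kappa^\star(\tau)$, Theorem~\ref{thm:first_moment} gives $\bbP[\disc > \kappa \sqrt{n}] = 1 - \smallO_d(1)$; since $\disc \geq 0$,
\begin{equation*}
    \EE[\disc] \geq \kappa \sqrt{n}\, \bbP[\disc > \kappa \sqrt{n}] \geq \kappa \sqrt{n}(1 - \smallO_d(1)).
\end{equation*}
Letting $\kappa \uparrow \kappa^\star(\tau)$ yields $\EE[\disc] \geq \kappa^\star(\tau)\sqrt{n}(1-\smallO_d(1))$.

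\myskip
Combining the two bounds, for $d$ large enough,
\begin{equation*}
    \frac{\EE[\disc]}{\sqrt{\Var[\disc]}} \geq \frac{\kappa^\star(\tau)\sqrt{n}(1-\smallO_d(1))}{\sqrt{2n/d}} \geq \frac{\kappa^\star(\tau)}{2\sqrt 2}\sqrt{d},
\end{equation*}
which proves the lemma with $c(\tau) = \kappa^\star(\tau)/(2\sqrt 2) > 0$. I anticipate no serious obstacle here, since both ingredients are clean and modular. The one inelegance is that $c(\tau) \downarrow 0$ as $\tau \to \infty$ (because $\kappa^\star(\tau) \downarrow 0$), which is essentially forced by the scale of the mean in the satisfiable regime. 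A proof that avoids invoking Theorem~\ref{thm:first_moment} altogether -- and thus applies independently of the specific location of the first-moment threshold -- would instead exploit a deletion/cover argument directly on the discrete signing space, in the spirit of the general framework of~\cite{altschuler2023zero}.
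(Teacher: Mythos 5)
The paper does not prove this lemma; it is quoted directly as Theorem~7 of \cite{altschuler2023zero}, which is established there via a general abstract framework (a deletion/cover argument on the discrete signing space) for sharp transitions in integer feasibility problems. Your proof is therefore a genuinely different, self-contained route, and it is correct. The variance bound is a clean application of Gaussian Lipschitz concentration (Poincar\'e): since $\bg \mapsto \|\sum_i \eps_i \bW_i(\bg_i)\|_\op$ is $\sqrt{2n/d}$-Lipschitz uniformly in $\eps$, so is the pointwise minimum, giving $\Var[\disc] \leq 2n/d$. The mean bound correctly exploits that $\tau_1$ is continuous and strictly decreasing on $(0,2)$, so fixing any $\kappa_0 \in (0, \kappa^\star(\tau))$ and applying Theorem~\ref{thm:first_moment} gives $\EE[\disc] \geq \kappa_0 \sqrt{n}(1-\smallO_d(1))$; combining, $\EE[\disc]/\sqrt{\Var[\disc]} \gtrsim \kappa_0 \sqrt{d}$. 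There is no circularity since Theorem~\ref{thm:first_moment} is proved independently of Lemma~\ref{lemma:dylan}. One small nitpick: the step ``letting $\kappa \uparrow \kappa^\star(\tau)$'' is not rigorous as stated, because the $\smallO_d(1)$ term depends on $\kappa$ and could degrade near the threshold; but this is immaterial, as any fixed $\kappa_0 < \kappa^\star(\tau)$ already yields a positive $c(\tau)$. The trade-off between the two approaches is exactly as you note: your argument is elementary and tailored to the Gaussian/GOE setting (Lipschitz concentration plus the explicit first-moment threshold), whereas the cited theorem is model-agnostic and does not require knowing where the first-moment transition sits -- which matters when one wants the result to apply beyond cases where $\tau_1(\kappa)$ can be computed.
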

\noindent
Let us see how the combination of Lemma~\ref{lemma:dylan} with our second moment estimates (Proposition~\ref{prop:2nd_moment}) 
ends the proof of Theorem~\ref{thm:second_moment}.

\myskip 
    Since $\bartau(\kappa)$ is a continuous function of $\kappa$ by Lemma~\ref{lemma:tau2_prop}, we choose $\delta > 0$ small enough such 
    that $\tau > \bartau(\kappa-\delta)$.
    Let $X \coloneqq \disc(\bW_1, \cdots, \bW_n)$.
    Notice that $X \leq \|\sum_{i=1}^n \bW_i\|_\op$, 
    so that $\EE X \leq 2 \sqrt{n}$ since $(1/\sqrt{n})\sum_{i=1}^n \bW_i \sim \GOE(d)$, 
    and $\EE \|\bY\|_\op \leq 2$ for $\bY \sim \GOE(d)$ (see e.g.\ Exercise~7.3.5 of \cite{vershynin2018high}).
    From Lemma~\ref{lemma:dylan} we thus get 
    \begin{align}\label{eq:varX}
        \Var(X) \leq \frac{4n}{c(\tau)^2 d},
    \end{align}
    where $\Var(X) \coloneqq \EE[(X - \EE X)^2]$.
    Recall that the Paley-Zygmund inequality states that for any random variable $X \geq 0$:
    \begin{align*}
        \bbP[X > 0] \geq \frac{\EE[X]^2}{\EE[X^2]}.
    \end{align*} 
    Applying it to $Z_{\kappa - \delta}$ and using Proposition~\ref{prop:2nd_moment}, we get that 
    \begin{equation}\label{eq:PX_lb}
        \bbP[X \leq (\kappa-\delta) \sqrt{n}] \geq L^{-1} \cdot \left[1 - \frac{\bartau(\kappa-\delta)}{\tau}\right]^{1/2} + \smallO_d(1).
    \end{equation}
    Let us denote $C(\tau,\kappa,\delta) \coloneqq L \cdot [1 - \bartau(\kappa-\delta)/\tau]^{-1/2}$.
    By Chebyshev's inequality and eq.~\eqref{eq:varX}, we further have for all $t > 0$:
    \begin{align*}
        \bbP[X \geq \EE X - t] \geq 1 - \frac{4n}{c(\tau)^2 d t^2}.
    \end{align*}
    In particular, if $t = c(\tau)^{-1} \sqrt{8 C(\tau, \kappa, \delta) n / d}$, we have $\bbP[X \geq \EE X - t] \geq 1 - (2C(\tau,\kappa,\delta))^{-1}$, which combined 
    with eq.~\eqref{eq:PX_lb} implies that  
    \begin{equation*}
        \EE X \leq (\kappa-\delta) \sqrt{n} + c_2(\tau,\kappa,\delta) \sqrt{\frac{n}{d}}, 
    \end{equation*}
    where we redefined the constant $c_2(\tau,\kappa,\delta) > 0$.
    Again by Chebyshev's inequality and eq.~\eqref{eq:varX}, this implies that for all $u > 0$:
    \begin{align*}
        \bbP[X \leq (\kappa-\delta) \sqrt{n} + c_2 \sqrt{\frac{n}{d}} + u] &\geq 1 - \frac{c_1(\tau) n}{d u^2}.
    \end{align*}
    Picking $u = \delta \sqrt{n} - c_2 \sqrt{n/d}$, we have $u \geq (\delta/2) \sqrt{n}$ for $n,d$ large enough, and this yields:
    \begin{align*}
        \bbP[X \leq \kappa \sqrt{n}] &\geq 1 - \frac{4 c_1(\tau)}{\delta^2 d} \to_{d \to \infty} 1,
    \end{align*}
    which ends the proof.
\end{proof}

\subsection{Proof of the second moment upper bound}
\label{subsec:proof_2nd_moment}

We prove here Proposition~\ref{prop:2nd_moment}.
We compute the second moment as: 
\begin{align}\label{eq:2nd_moment_1}
    \nonumber
    \EE[Z_\kappa^2] &= \sum_{\eps, \eps' \in \{\pm 1\}^n} \bbP\left[\left\|\sum_{i=1}^n \eps_i \bW_i\right\|_\op \leq \kappa \sqrt{n} \, \, \textrm{ and } \, \,\left\|\sum_{i=1}^n \eps'_i \bW_i\right\|_\op \leq \kappa \sqrt{n}\right], \\ 
    \nonumber
    &\aeq 2^n \sum_{\eps \in \{\pm 1\}^n} \bbP\left[\left\|\sum_{i=1}^n \bW_i\right\|_\op \leq \kappa \sqrt{n} \, \, \textrm{ and } \, \,\left\|\sum_{i=1}^n \eps_i \bW_i\right\|_\op \leq \kappa \sqrt{n}\right], \\
    &\beq 2^n \sum_{l=0}^n \binom{n}{l} \bbP\left[\|\bW\|_\op \leq \kappa \textrm{ and } \|q_l \bW + \sqrt{1-q_l^2} \bZ\|_\op \leq \kappa\right].
\end{align}
In $(\rm a)$ and $(\rm b)$ we used the rotation invariance of the $\GOE(d)$ distribution. In eq.~\eqref{eq:2nd_moment_1}, 
we changed variables to $l \coloneqq (\langle \eps, \ones_n\rangle + n)/2$ and defined the ``overlap'' $q_l \coloneqq (1/n) \langle \eps, \ones_n \rangle = 2 (l/n) - 1$.
Furthermore, $\bW, \bZ \sim \GOE(d)$ independently. 
We get from eq.~\eqref{eq:2nd_moment_1} that (recall as well eq.~\eqref{eq:E_Zr}):
\begin{align}\label{eq:2nd_moment_2}
    \frac{\EE[Z_\kappa^2]}{\EE[Z_\kappa]^2} &= \frac{1}{2^n} \sum_{l=0}^n \binom{n}{l} \exp\{n G_d(q_l)\}, 
\end{align}
where for $q \in [-1,1]$:
\begin{align}\label{eq:def_Gd}
    G_d(q) \coloneqq&  
    \frac{1}{n} \log \frac{\bbP\left[\|\bW\|_\op \leq \kappa \textrm{ and } \|q \bW + \sqrt{1-q^2} \bZ\|_\op \leq \kappa\right]}{\bbP[\|\bW\|_\op \leq \kappa]^2}.
\end{align}
Recall that $H(p) \coloneqq - p \log p - (1-p)\log (1-p)$.
We will leverage the following lemma, which is based on standard asymptotic techniques, 
and whose proof is deferred to Section~\ref{subsec:proof_laplace_discrete}.
\begin{lemma}\label{lemma:laplace_discrete}
    Let $n \geq 1$, and $F_n : [-1,1] \to \bbR$ such that $F_n(0) = 0$ and $F_n'(0) = 0$.
    Assume that there exists $(\gamma, \delta) > 0$ such that:
    \begin{itemize}
        \item[$(i)$] $\limsup_{n \to \infty} \sup_{|q| \leq \delta} F_n''(q) \leq 1 - \gamma$. 
        \item[$(ii)$] $\limsup_{n \to \infty} \sup_{|q| \geq \delta} \left[F_n(q) + H\left(\frac{1+q}{2}\right) \right] < \log 2$. 
    \end{itemize}
    Then (with $q_l \coloneqq 2l/n - 1 \in [-1,1]$ for $l \in \{0,\cdots, n\}$):
    \begin{align*}
        \limsup_{n \to \infty} \frac{1}{2^n}\sum_{l=0}^n \binom{n}{l} \exp\{n F_n(q_l)\} &\leq \frac{C}{\sqrt{\gamma}},
    \end{align*}
    for a global constant $C > 0$.
\end{lemma}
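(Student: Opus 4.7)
\textbf{Plan of proof for Lemma~\ref{lemma:laplace_discrete}.}
The plan is to split the sum at the threshold $\delta$ provided by the hypotheses, handle the tail region $\{|q_l|>\delta\}$ by a crude entropy bound coming from (ii), and handle the bulk region $\{|q_l|\le\delta\}$ by refined Stirling combined with a Gaussian-type concentration estimate coming from (i). The two assumptions are precisely tailored to these two regions.

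For the tail, I would use the elementary bound $\binom{n}{l}/2^n \le \exp\bigl(n[H((1+q_l)/2)-\log 2]\bigr)$, valid for $1 \le l \le n-1$ and extending trivially to the endpoints $l\in\{0,n\}$ since $\binom{n}{0}=\binom{n}{n}=1$ and $H(0)=H(1)=0$. Hypothesis (ii) then gives, for $n$ large enough, a constant $c>0$ such that $F_n(q_l)+H((1+q_l)/2)-\log 2 \le -c$ uniformly on $|q_l|>\delta$. Thus each such term is at most $e^{-nc}$, and summing the $O(n)$ terms produces an $o(1)$ contribution, which is absorbed into the $C/\sqrt{\gamma}$ bound.

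For the bulk, I would first use hypothesis (i) with a small safety margin: for $n$ large enough, $\sup_{|q|\le\delta} F_n''(q)\le 1-\gamma/2$. Since $F_n(0)=F_n'(0)=0$, Taylor's formula with integral remainder gives $F_n(q) \le (1-\gamma/2)q^2/2$ for $|q|\le\delta$. On the entropy side, the identity $\frac{d^2}{dq^2} H((1+q)/2) = -(1-q^2)^{-1} \le -1$ together with $H(1/2)=\log 2$ and vanishing first derivative at $q=0$ yields $H((1+q)/2)-\log 2 \le -q^2/2$ for all $|q|\le 1$. Combining these two quadratic bounds gives $F_n(q)+H((1+q)/2)-\log 2 \le -\gamma q^2/4$ on the bulk. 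On this same range, since $|q|\le\delta<1$, the refined Stirling estimate $\binom{n}{l}/2^n \le C(\delta)\,n^{-1/2}\,\exp(n[H((1+q_l)/2)-\log 2])$ is valid uniformly (because both $l/n$ and $1-l/n$ are bounded away from $0$). Assembling, the bulk contribution is bounded by
\begin{equation*}
    \frac{C(\delta)}{\sqrt{n}} \sum_{l:\,|q_l|\le\delta} \exp\!\Bigl(-\frac{n\gamma}{4}q_l^2\Bigr).
\end{equation*}

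The last step is a standard Riemann sum-to-Gaussian-integral comparison: since the summand is unimodal with mode at $q=0$ and the grid spacing is $\Delta q = 2/n$, one obtains
\begin{equation*}
    \sum_{l:\,|q_l|\le\delta} \exp\!\Bigl(-\frac{n\gamma}{4}q_l^2\Bigr)
    \le 1 + \frac{n}{2}\int_{-\infty}^{\infty}\exp\!\Bigl(-\frac{n\gamma}{4}x^2\Bigr)\,\rd x
    = 1 + \sqrt{\frac{\pi n}{\gamma}},
\end{equation*}
so the bulk contribution is at most $C(\delta)(1/\sqrt{n} + \sqrt{\pi/\gamma})$, which together with the $o(1)$ tail gives the claimed bound $\limsup_n S_n \le C/\sqrt{\gamma}$. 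There is no real obstacle here, only bookkeeping: the one place requiring care is ensuring that the Stirling constant is uniform on the bulk (it is, since $|q_l|\le\delta<1$ keeps $l$ and $n-l$ of order $n$), and that hypothesis (i), which is stated with a $\limsup$, may be freely upgraded to a uniform bound $1-\gamma/2$ once $n$ is sufficiently large.
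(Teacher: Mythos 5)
Your decomposition into tail and bulk, and the treatment of the tail via $\binom{n}{l}2^{-n}\le e^{n[H((1+q_l)/2)-\log2]}$, is exactly what the paper does. For the bulk, however, you take a genuinely different route: the paper extends the bulk sum (bounded by $2^{-n}\sum_l\binom{n}{l}e^{n(1-\gamma+\sigma)q_l^2/2}$) back to a sum over all $l\in\{0,\dots,n\}$ and invokes a quantitative version of the Achlioptas--Moore discrete Laplace lemma (Lemma~\ref{lemma:achlioptas}), computing the second derivative of $g(x)=G(x)/(x^x(1-x)^{1-x})$ at its unique maximum $x=1/2$. You instead combine the refined Stirling bound with the quadratic bound $H((1+q)/2)-\log2\le -q^2/2$ to produce an explicit Gaussian summand $e^{-n\gamma q_l^2/4}$, then compare the Riemann sum to a Gaussian integral. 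Your route is arguably more elementary and self-contained, since it bypasses the auxiliary lemma and the analysis of $g$.

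There is, however, one genuine (if easily repairable) gap: the refined Stirling estimate $\binom{n}{l}2^{-n}\le C(\delta)\,n^{-1/2}\,e^{n[H((1+q_l)/2)-\log2]}$ has $C(\delta)\sim (1-\delta^2)^{-1/2}$, which blows up as $\delta\uparrow 1$. You carry $C(\delta)$ through the bulk estimate and then silently drop the $\delta$-dependence in the final line, claiming a bound $C/\sqrt{\gamma}$ with a global $C$. But the lemma's conclusion, as stated and as used (Proposition~\ref{prop:2nd_moment} claims an absolute constant $L$, and the $\delta$ chosen in that application is $\delta_{\eta^\star(\kappa)}$, which can be arbitrarily close to $1$), requires the constant to be independent of $\delta$. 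The paper's argument does not suffer from this because Lemma~\ref{lemma:achlioptas} is applied to the entire sum and the constant it produces depends only on $g''(1/2)=-4(\gamma-\sigma)$, not on $\delta$. The fix for your proof is simple and worth stating: one may assume without loss of generality that $\delta\le 1/2$. Indeed, for any $0<\delta'<\delta$, hypothesis $(i)$ with $\delta'$ is immediate, and hypothesis $(ii)$ with $\delta'$ follows because on $\delta'\le|q|\le\delta$ one has, for $n$ large, $F_n(q)+H((1+q)/2)\le\log2-(\gamma-\sigma)q^2/2\le\log2-(\gamma-\sigma)(\delta')^2/2<\log2$ by the very Taylor bounds you already use. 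With $\delta\le1/2$, the Stirling constant is bounded by a universal constant and your argument closes.
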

\noindent
From eq.~\eqref{eq:2nd_moment_2}, in order to finish the proof of Proposition~\ref{prop:2nd_moment}, it suffices to check conditions 
$(i)$ and $(ii)$ of Lemma~\ref{lemma:laplace_discrete} for $G_d$ defined in eq.~\eqref{eq:def_Gd}, for $\tau > \bartau(\kappa)$, $\gamma = 1 - \bartau(\kappa)/\tau$, and some $\delta > 0$.

\myskip
Recall the definition of $\eta^\star(\kappa)$ in Lemma~\ref{lemma:tau2_prop}. 
We let $\delta \coloneqq \delta_{\eta^\star(\kappa)}$ as defined by eq.~\eqref{eq:def_delta_eta}.

\myskip 
\textbf{Condition $(ii)$ --} 
Notice that $G_d(0) = 0$ and that $G_d$ is clearly an even function of $q$, so $G_d'(0) = 0$ (the smoothness of $G_d$ can be shown by direct computation, as we will see in eq.~\eqref{eq:P_W_Z}).
Furthermore, we have the trivial bound:
\begin{align*}
    G_d(q) + H \left(\frac{1+q}{2}\right) &\leq H \left(\frac{1+q}{2}\right)  - \frac{1}{n} \log \bbP[\|\bW\|_\op \leq \kappa].
\end{align*}
Recall that $q \mapsto H[(1+q)/2]$ is even, and strictly decreasing on $[0,1]$.
Using Proposition~\ref{prop:ldp_Wop}, and the definition of $\tau_1(\kappa)$ in eq.~\eqref{eq:tau_1st_moment}, 
we get
\begin{align*}
    \limsup_{d \to \infty} \sup_{|q| \geq \delta} \left[G_d(q) + H \left(\frac{1+q}{2}\right)\right] &\leq H \left(\frac{1+\delta}{2}\right) + \frac{\tau_1(\kappa)}{\tau} \log 2, \\ 
    &\aless H \left(\frac{1+\delta}{2}\right) + \left[\frac{1}{1+\eta^\star(\kappa)}\right] \log 2, \\ 
    &\beq \left[\frac{\eta^\star(\kappa)}{1+\eta^\star(\kappa)} + \frac{1}{1+\eta^\star(\kappa)}\right] \log 2, \\ 
    &= \log 2,
\end{align*}
using $\tau > \bartau(\kappa) = (1+\eta^\star(\kappa))\tau_1(\kappa)$ in $(\rm a)$, 
and the definition of $\delta = \delta_{\eta^\star(\kappa)}$ in $(\rm b)$, cf.\ eq.~\eqref{eq:def_delta_eta}.
We have thus checked condition $(ii)$ of Lemma~\ref{lemma:laplace_discrete}.

\myskip
\textbf{Condition $(i)$ --}
We will show that for any $\delta \in (0,1)$:
\begin{align}\label{eq:to_show_Gd_2nd_derivative}
    \nonumber
    \limsup_{d \to \infty}\sup_{|q| \leq \delta} G_d''(q) \leq 
    \frac{1}{\tau} &\left\{\frac{1+\delta^2}{2(1-\delta^2)^2}
    + \left[\frac{\delta(1+6\delta+3\delta^2+2\delta^3)}{(1-\delta^2)^3(1-\delta)}\right] \kappa \right.\\
    &
    \left.
      + \left[ \frac{2(1+\delta)^5}{(1-\delta^2)^4} - \frac{(1+3 \delta^2)}{4(1-\delta^2)^3}\right] \kappa^2 
      + \frac{\kappa^4(1+3\delta^2)}{32(1-\delta^2)^3}\right\}.
\end{align}

\myskip
Let us first show how eq.~\eqref{eq:to_show_Gd_2nd_derivative} finishes the proof of Proposition~\ref{prop:2nd_moment}. 
We pick $\delta = \delta_{\eta^\star(\kappa)}$. 
By Lemma~\ref{lemma:tau2_prop}, eq.~\eqref{eq:to_show_Gd_2nd_derivative} can be rewritten for this value of $\delta$ 
as 
\begin{align*}
    \limsup_{d \to \infty}\sup_{|q| \leq \delta} G_d''(q) \leq 
    \frac{\bartau(\kappa)}{\tau} = 1 - \gamma,
\end{align*}
with $\gamma \coloneqq (1 - \bartau(\kappa)/\tau)$.
This implies that condition $(i)$ of Lemma~\ref{lemma:laplace_discrete} holds with this value of $\gamma$, 
and thus ends the proof of Proposition~\ref{prop:2nd_moment}, as described above.

\myskip
\textbf{Proof of eq.~\eqref{eq:to_show_Gd_2nd_derivative} --}
There remains to show eq.~\eqref{eq:to_show_Gd_2nd_derivative}.
Let $q \in [0,1)$.
We have ($\rd \bW = \prod_{i \leq j} \rd W_{ij}$ is the Lebesgue measure over the space $\mcS_{d}$ of symmetric matrices):
\begin{align}
    \label{eq:P_W_Z}
    \nonumber
    &\bbP\left[\|\bW\|_\op \leq \kappa \textrm{ and } \|q \bW + \sqrt{1-q^2} \bZ\|_\op \leq \kappa\right] \\ 
    \nonumber
    &= \frac{\int \indi\{\|\bW\|_\op \leq \kappa\} e^{-\frac{d}{4} \Tr[\bW^2]} \bbP\left[\|q \bW + \sqrt{1-q^2} \bZ\|_\op \leq \kappa\right] \rd \bW}{\int  e^{-\frac{d}{4} \Tr[\bW^2]} \rd \bW}, \\
    \nonumber
    &= \frac{\int \indi\{\|\bW\|_\op \leq \kappa\} e^{-\frac{d}{4} \Tr[\bW^2]} \left(\int \rd \bY e^{-\frac{d}{4(1-q^2)}\Tr[(\bY - q \bW)^2]} \indi\{\|\bY\|_\op \leq \kappa\}\right) \rd \bW}{\left(\int  e^{-\frac{d}{4} \Tr[\bW^2]} \rd \bW\right) \left(\int \rd \bY e^{-\frac{d}{4(1-q^2)}\Tr[\bY^2]}\right)}, \\
    &= \frac{\int \indi\{\|\bW\|_\op, \|\bY\|_\op \leq \kappa\} e^{-\frac{d}{4(1-q^2)} (\Tr[\bW^2] + \Tr[\bY^2]) + \frac{dq}{2(1-q^2)} \Tr[\bY \bW]} \rd \bY \rd \bW}{\left(\int \rd \bW e^{-\frac{d}{4}\Tr[\bW^2]}\right)^2 (1-q^2)^{d(d+1)/4}}.
\end{align}
Starting from eq.~\eqref{eq:P_W_Z}, we can compute the derivatives of $G_d(q)$.
We will use the shorthand notation 
\begin{align}
    \label{eq:gibbs_q}
    \langle \cdot \rangle_{q,\kappa} &\coloneqq \frac{\int (\cdot) \indi\{\|\bW\|_\op, \|\bY\|_\op \leq \kappa\} e^{-\frac{d}{4(1-q^2)} (\Tr[\bW^2] + \Tr[\bY^2]) + \frac{dq}{2(1-q^2)} \Tr[\bY \bW]} \rd \bY \rd \bW}{\int \indi\{\|\bW\|_\op, \|\bY\|_\op \leq \kappa\} e^{-\frac{d}{4(1-q^2)} (\Tr[\bW^2] + \Tr[\bY^2]) + \frac{dq}{2(1-q^2)} \Tr[\bY \bW]} \rd \bY \rd \bW},
\end{align}
i.e.\ $\langle \cdot \rangle_{q,\kappa}$ is the law of $(\bW, \bY)$ two correlated $\GOE(d)$ matrices (with correlation $q$), conditioned on the event $\|\bW\|_\op, \|\bY\|_\op \leq \kappa$.
$G_d(q)$ is the log-partition function (or ``free energy'' in statistical physics) of 
this high-dimensional probability measure, 
and taking derivatives will yield averages of observables under $\langle \cdot \rangle_{q,\kappa}$.
We get
\begin{align*}
    G_d'(q) &= \frac{d(d+1) q}{2n (1-q^2)} + \frac{1}{2n} \left\langle -\frac{dq}{(1-q^2)^2} \Tr[\bW^2+\bY^2] 
    + \frac{d(1+q^2)}{(1-q^2)^2} \Tr[\bW\bY]
    \right\rangle_{q,\kappa}.
\end{align*}
Differentiating further, we obtain:
\begin{align}\label{eq:d2G_dq2_1}
    \nonumber
    G_d''(q) &= \underbrace{\frac{d(d+1) (1+q^2)}{2n (1-q^2)^2}}_{\eqqcolon I_1(q)}
    + \underbrace{\frac{1}{2n} \left\langle -\frac{d(1+3q^2)}{(1-q^2)^3} \Tr[\bW^2+\bY^2] 
    + \frac{2dq(3+q^2)}{(1-q^2)^3} \Tr[\bW\bY] \right\rangle_{q,\kappa}}_{\eqqcolon I_2(q)} \\ 
    &+ \underbrace{\frac{1}{4n} \textrm{Var}_{\langle \cdot \rangle_{q,\kappa}} \left(-\frac{dq}{(1-q^2)^2} \Tr[\bW^2+\bY^2] 
    + \frac{d(1+q^2)}{(1-q^2)^2} \Tr[\bW\bY]\right)}_{\eqqcolon I_3(q)}.
\end{align}
We bound successively the different terms $\{I_a\}_{a=1}^3$ in eq.~\eqref{eq:d2G_dq2_1}. 
Since $n/d^2 \to \tau$, we have:
\begin{align}\label{eq:ub_I1}
   \limsup_{d \to \infty}\sup_{|q| \leq \delta} I_1(q) = \frac{1}{\tau} \sup_{|q| \leq \delta }\frac{(1+q^2)}{2(1-q^2)^2} 
    = \frac{(1+\delta^2)}{2 \tau(1-\delta^2)^2}.
\end{align}
Recall that for a real random variable $X$, we define the sub-Gaussian norm $\|X\|_{\psi_2}$ of $X$ as~\citep{vershynin2018high}:
\begin{align*}
    \|X\|_{\psi_2} \coloneqq \inf \{t > 0 \, : \, \EE[\exp(X^2/t^2)] \leq 2\}. 
\end{align*}
To bound $I_2$ and $I_3$, we rely on the following crucial result, which we prove in Section~\ref{subsec:lsi}.
\begin{lemma}[Concentration of moments under $\langle \cdot \rangle_{q,\kappa}$]
    \label{lemma:conc_moments_Pqkappa}
    Let $q \in (-1,1)$, $\kappa \in (0,2]$, and 
    \begin{equation*}
    P(X_1, X_2) \coloneqq \sum_{p\geq 0}\sum_{i_1, \cdots, i_p \in \{1,2\}} a_{i_1 \cdots i_p} X_{i_1} \cdots X_{i_p}
    \end{equation*}
    be a polynomial 
    in two non-commutative random variables $(X_1, X_2)$.
    Let $(\bW, \bY)~\sim~\langle \cdot \rangle_{q,\kappa}$ given by eq.~\eqref{eq:gibbs_q}.
    Then: 
    \begin{equation*}
    \|\Tr \, P(\bW, \bY) - \langle \Tr \, P(\bW, \bY) \rangle_{q,\kappa} \|_{\psi_2} \leq C \sqrt{1+q} 
    \sum_{p\geq 0} p \cdot \kappa^{p-1}\sum_{i_1, \cdots, i_p \in \{1,2\}} |a_{i_1 \cdots i_p}|,
    \end{equation*}
    where $C > 0$ is an absolute constant. Furthermore, we have the fully explicit bound:
    \begin{equation}\label{eq:var_PWY}
        \Var_{\langle \cdot \rangle_{q,\kappa}} [\Tr \, P(\bW, \bY)] \leq 
         2(1+q)
        \left(\sum_{p\geq 0} p \cdot \kappa^{p-1}\sum_{i_1, \cdots, i_p \in \{1,2\}} |a_{i_1 \cdots i_p}|\right)^2.
    \end{equation}
\end{lemma}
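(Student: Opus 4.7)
The plan is to establish Lemma~\ref{lemma:conc_moments_Pqkappa} by recognising $\langle\cdot\rangle_{q,\kappa}$ as a strongly log-concave measure on a convex set, deducing functional inequalities for it, and combining them with an explicit Lipschitz bound on $\Tr P(\bW,\bY)$.

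In the entry coordinates $(W_{ij},Y_{ij})_{i\le j}$, the density from eq.~\eqref{eq:gibbs_q} has the form $Z^{-1}e^{-V(\bW,\bY)}\indi_{K_\kappa}$, where $K_\kappa:=\{\|\bW\|_\op,\|\bY\|_\op\le\kappa\}$ is convex (operator-norm balls being convex) and $V$ is the Gaussian quadratic form in the exponent. A direct block-diagonal computation on the pairs $(W_{ij},Y_{ij})$ shows that $\nabla^2 V\succeq\lambda I$ with $\lambda:=d/(2(1+|q|))$, the diagonal blocks being half as stiff as the off-diagonal ones. Since $V$ is strongly convex and $K_\kappa$ is convex, the conditioned measure inherits a log-Sobolev inequality with constant $2/\lambda=4(1+|q|)/d$; this is the Bakry--Emery criterion for log-concave densities on convex sets, obtainable e.g.\ by Caffarelli's contraction theorem or by approximating $\indi_{K_\kappa}$ with $e^{-t\,\mathrm{dist}(\cdot,K_\kappa)^2}$ and letting $t\to\infty$. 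Herbst's argument then gives the sub-Gaussian concentration on Lipschitz functionals, while the Brascamp--Lieb inequality $\Var_\mu f\le\EE_\mu[(\nabla f)^\top(\nabla^2 V)^{-1}\nabla f]$, applied blockwise and combined with the triangle inequality for $\sqrt{\Var(\cdot)}$, delivers the variance bound with the explicit constant $2(1+q)$.

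The second ingredient is a Lipschitz estimate for $\Tr P(\bW,\bY)$ on $K_\kappa$. For a single non-commutative monomial $X_{i_1}\cdots X_{i_p}$ with $\bM_1=\bW$, $\bM_2=\bY$, the symmetric-matrix derivative $\nabla^{\mathrm{sym}}_\bW\Tr[\bM_{i_1}\cdots\bM_{i_p}]$ is a sum, over the $k_\bW$ positions $j$ with $i_j=1$, of symmetrisations of the cyclic rotations $\bM_{i_{j+1}}\cdots\bM_{i_{j-1}}$. Applying $\|AB\|_F\le\|A\|_\op\|B\|_F$ iteratively with $\|\bM_k\|_\op\le\kappa$ and $\|\bM_k\|_F\le\sqrt{d}\,\kappa$ on $K_\kappa$ bounds each summand by $\sqrt{d}\,\kappa^{p-1}$, so that $\|\nabla^{\mathrm{sym}}_\bW\Tr M\|_F\le k_\bW\sqrt{d}\,\kappa^{p-1}$, and analogously with $\bY$. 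Passing from the symmetric-matrix gradient to entry-wise partials introduces only explicit numerical factors, and summing over monomials via the triangle inequality for the relevant semi-norm yields the Lipschitz bound $\lesssim\sqrt{d}\sum_p p\kappa^{p-1}\sum|a_{i_1\cdots i_p}|$. Plugging this into the LSI and Brascamp--Lieb estimates above then gives the claimed $\sqrt{1+q}$ and $2(1+q)$ constants respectively.

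The main obstacle is the rigorous use of Bakry--Emery and Brascamp--Lieb for a density that is the product of a strongly log-concave Gaussian with the indicator of a convex set, rather than a smooth potential on all of $\bbR^N$: the smoothing/limit argument must preserve the curvature lower bound, which is standard but worth writing out carefully. The remainder is bookkeeping --- handling symmetrisation under differentiation with respect to symmetric matrices, the factor~$2$ between $W_{ij}$ partials and entries of the symmetric gradient, and the systematic use of $\|\cdot\|_F\le\sqrt{d}\,\|\cdot\|_\op$ so that no extra dimension factor survives beyond the targeted $\sqrt{1+q}$.
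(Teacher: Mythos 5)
Your overall strategy --- a Bakry--Emery curvature bound $\Rightarrow$ log-Sobolev for the conditioned measure, an explicit Lipschitz bound for $\Tr P$ on the operator-norm ball, then Herbst --- is precisely the one the paper uses. Where you diverge is in how the LSI is justified for the density cut off by $\indi\{\|\bW\|_\op,\|\bY\|_\op\le\kappa\}$: you propose a matrix-level regularisation of the indicator (or an appeal to Caffarelli's contraction theorem), while the paper's Lemma~\ref{lemma:lsi_Pqkappa} works scalar-first. It Gaussian-mollifies the one-dimensional potential $x^2/2$ restricted to $[-\kappa,\kappa]$ into a $C^\infty$ function $V_\eps$, verifies $V_\eps''\ge(1+\eps)^{-1}$ via the Saumard--Wellner strong-log-concavity convolution theorem, lifts to the matrix potential $\Tr V_\eps(\cdot)$ by Klein's lemma, applies Bakry--Emery (Theorem~\ref{thm:be_implies_lsi}), and sends $\eps\downarrow 0$ by weak convergence. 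This scalar-plus-Klein route sidesteps the regularity issue you flag (squared distance to a convex body is only $C^{1,1}$, so its distributional Hessian needs handling) and is tailor-made for spectral constraints of the form $\|\bW\|_\op\le\kappa$. Your blockwise Brascamp--Lieb argument for the exact factor $2(1+q)$ in eq.~\eqref{eq:var_PWY} is sound but not necessary: the paper works throughout in the Frobenius metric, where both the curvature lower bound $d/(2(1+q))$ and the Lipschitz constant $\sqrt d\sum_p p\kappa^{p-1}\sum|a|$ from Lemma~\ref{lemma:poly_lipschitz_Bop} are uniform across coordinates, so Herbst already gives $\Var\le c\,\|f\|_L^2 = 2(1+q)\bigl(\sum_p p\kappa^{p-1}\sum|a|\bigr)^2$ directly. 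The factor-of-two bookkeeping between off-diagonal and diagonal blocks that your blockwise computation handles explicitly is exactly what the Frobenius inner product absorbs. Your Hessian block computation and Lipschitz estimate both check out, and the overall argument is correct.
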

\noindent
Lemma~\ref{lemma:conc_moments_Pqkappa} is a consequence of a log-Sobolev inequality we prove for $\langle \cdot \rangle_{q,\kappa}$.

\myskip 
\textbf{Bounding $I_2$ --}
Note that under the law of $\langle \cdot \rangle_{0,\kappa}$ of eq.~\eqref{eq:gibbs_q} when $q = 0$,
$\bW$ is distributed as a $\GOE(d)$ matrix, conditioned to satisfy $\|\bW\|_\op \leq \kappa$.
By Theorem~\ref{thm:lsd_constrained_GOE}, we know that $\mu_\bW$ weakly converges (a.s.) to $\mu_\kappa^\star$.
Since $\int \mu_\bW(\rd x) x^2 = \int \mu_\bW(\rd x) x^2 \indi\{|x| \leq \kappa\}$, we have by the Portmanteau theorem 
and dominated convergence: 
\begin{align}
    \label{eq:limit_TrW2_q0}
    \nonumber
    \lim_{d \to \infty} \frac{1}{d}\langle\Tr \bW^2\rangle_{0, \kappa} &= \int \mu_\kappa^\star(\rd x) \, x^2 \, \indi\{|x| \leq \kappa\} \\ 
    &= \int \mu_\kappa^\star(\rd x) \, x^2, \\
    &\aeq \frac{\kappa^2 (8-\kappa^2)}{16},
\end{align}
using eq.~\eqref{eq:int_mukappa_x2} in $(\rm a)$.
By symmetry $\bW \to -\bW$, we also trivially have
\begin{align}
    \label{eq:limit_TrWY_q0}
    \langle\Tr \bW \bY\rangle_{0, \kappa} &= 0.
\end{align}
Let us denote $P_q(X, Y) \coloneqq - q (X^2 + Y^2) + (1+q^2) X Y$. 
One easily computes from eq.~\eqref{eq:gibbs_q} that 
for any function $\varphi(\bW, \bY)$: 
\begin{align*}
    \frac{\partial}{\partial q}\langle \varphi(\bW, \bY)\rangle_{q,\kappa} 
    &= \frac{d \left[\langle \varphi(\bW, \bY) \cdot \Tr[P_q(\bW, \bY)] \rangle_{q, \kappa} - \langle \varphi(\bW, \bY) \rangle_{q, \kappa} \langle \Tr[P_q(\bW, \bY)] \rangle_{q, \kappa}\right]}{2(1-q^2)^2}, \\ 
    &= \frac{d \left[\langle \left(\varphi(\bW, \bY) - \langle \varphi \rangle_{q, \kappa}\right) \cdot \left(\Tr[P_q(\bW, \bY)] - \langle \Tr[P_q]\rangle_{q, \kappa}\right) \rangle_{q, \kappa}\right]}{2(1-q^2)^2}.
\end{align*}
In particular:
\begin{align}\label{eq:derivative_average_q}
    \left|\frac{\partial}{\partial q}\langle \varphi(\bW, \bY)\rangle_{q,\kappa}\right| 
    &\leq \frac{d \left[\Var_{\langle \cdot \rangle_{q, \kappa}}[\varphi(\bW, \bY)] \cdot \Var_{\langle \cdot \rangle_{q, \kappa}}[\Tr P_q(\bW, \bY)]\right]^{1/2}}{2(1-q^2)^2}.
\end{align}
Using eq.~\eqref{eq:derivative_average_q} and Lemma~\ref{lemma:conc_moments_Pqkappa}, 
we reach that for both $\varphi = \Tr[\bW^2]$ and $\varphi = \Tr[\bW \bY]$: 
\begin{align}\label{eq:bound_derivative_applications}
    \left|\frac{\partial}{\partial q} \langle \varphi(\bW, \bY)\rangle_{q,\kappa}\right| 
    &\leq \frac{2 d \kappa (1+q)^2}{(1-q^2)^2} = \frac{2 d \kappa}{(1-q)^2}.
\end{align}
Integrating eq.~\eqref{eq:bound_derivative_applications}, and combining it with eqs.~\eqref{eq:limit_TrW2_q0} and eq.~\eqref{eq:limit_TrWY_q0}, we get:
\begin{align}\label{eq:bound_TrW2_TrWY_q0}
    \begin{dcases}
        \left|\frac{1}{d}\langle\Tr \bW^2\rangle_{q, \kappa} - \frac{\kappa^2 (8-\kappa^2)}{16} \right| &\leq \frac{2\kappa |q|}{1-q}  + \smallO_d(1), \\
        \left|\frac{1}{d}\langle\Tr \bW \bY\rangle_{q, \kappa} \right| &\leq \frac{2\kappa |q|}{1-q}  + \smallO_d(1),
    \end{dcases}
\end{align}
where $\smallO_d(1)$ is uniform in $q$.
We get from eq.~\eqref{eq:bound_TrW2_TrWY_q0}: 
\begin{align}
    \label{eq:I2_1}
    \nonumber
    \limsup_{d \to \infty}\sup_{|q| \leq \delta} I_2(q) &\leq 
    \frac{1}{2\tau} \max_{|q| \leq \delta }\left[
        \frac{4 \kappa q^2(3+q^2)}{(1-q^2)^3(1-q)} - \frac{1+3q^2}{(1-q^2)^3} \left(\frac{\kappa^2(8 - \kappa^2)}{16} - \frac{2\kappa |q|}{1-q}\right)
    \right], \\ 
    &= 
    \frac{1}{2\tau} \max_{q \in [0, \delta]}\left[
        \frac{2 \kappa q(1+6q+3q^2+2q^3)}{(1-q^2)^3(1-q)} - \frac{\kappa^2 (8-\kappa^2)(1+3q^2)}{16 (1-q^2)^3}
    \right].
\end{align}
If 
\begin{equation*}
f_\kappa(q) \coloneqq \frac{2 \kappa q(1+6q+3q^2+2q^3)}{1-q} -  \frac{\kappa^2 (8-\kappa^2)(1+3q^2)}{16},
\end{equation*}
then for all $q \in [0,1]$ and $\kappa \in [0,2]$:
\begin{align}\label{eq:fprime_q}
    \nonumber
    f_\kappa'(q) &= \frac{\kappa}{8(1-q)^2} \left[16 + 3(64-8\kappa+\kappa^3)q + 6(8+8\kappa-\kappa^3)q^2 + (32-3\kappa(8-\kappa^2))q^4 - 96 q^5\right], \\ 
    \nonumber
    &\geq \frac{\kappa}{8(1-q)^2} \left[16 + 3(64-16)q + 6(8-8)q^2 + (32-3\cdot 2 \cdot(8))q^4 - 96 q^5\right], \\ 
    \nonumber
    &\geq \frac{\kappa}{8(1-q)^2} \left[16+144q-16q^3-96q^5\right], \\ 
    &\ageq \frac{\kappa}{8(1-q)^2} \left[16+32q\right] > 0,
\end{align}
using $q^k \leq q$ for any $k \geq 1$ in $(\rm a)$.
This implies that in eq.~\eqref{eq:I2_1}, the maximum is attained at $q = \delta$, and we get:
\begin{align}\label{eq:ub_I2}
    \limsup_{d \to \infty} \sup_{|q| \leq \delta} I_2(q) 
    \leq
    \frac{1}{2\tau} \left[
        \frac{2 \kappa \delta(1+6\delta+3\delta^2+2\delta^3)}{(1-\delta^2)^3(1-\delta)} - \frac{\kappa^2 (8-\kappa^2)(1+3\delta^2)}{16 (1-\delta^2)^3}
    \right].
\end{align}

\myskip 
\textbf{Bounding $I_3$ --}
We apply eq.~\eqref{eq:var_PWY} of Lemma~\ref{lemma:conc_moments_Pqkappa} to $P_q(X, Y) = - q (X^2+Y^2) + (1+q^2) XY$, which yields:
\begin{align*}
    I_3(q) &\leq \frac{d^2}{4 n (1-q^2)^4} \cdot 2(1+q) \left(2 \kappa [2 q + 1+q^2]\right)^2, \\ 
    &= \frac{2d^2 (1+q)^5 \kappa^2}{n (1-q^2)^4}.
\end{align*}
So finally we get:
\begin{align}\label{eq:ub_I3}
    \limsup_{d \to \infty} \sup_{|q| \leq \delta} I_3(q) &\leq \frac{2(1+\delta)^5 \kappa^2}{\tau (1-\delta^2)^4}.
\end{align}
Combining eqs.~\eqref{eq:ub_I1},\eqref{eq:ub_I2},\eqref{eq:ub_I3} finishes the proof of eq.~\eqref{eq:to_show_Gd_2nd_derivative}. 
As we discussed above, this ends the proof of Proposition~\ref{prop:2nd_moment}.
$\qed$

\subsection{Discrete Laplace's method for a dimension-dependent exponent}\label{subsec:proof_laplace_discrete}

We prove here Lemma~\ref{lemma:laplace_discrete}.
By hypothesis $(ii)$, we fix $\eps > 0$ such that, for $n$ large enough:
\begin{equation}
    \label{eq:bound_Fn_plus_H_eps}
    \sup_{|q| \geq \delta} \left[F_n(q) + H\left(\frac{1+q}{2}\right) \right] \leq \log 2 - \eps.
\end{equation}
Recall the classical inequality:
\begin{align}
    \label{eq:bounds_binomial_crude}
  \binom{n}{l} \leq e^{n H(l/n)}, \hspace{0.5cm} &\textrm{ for } l \in \{0,\cdots, n\}.
\end{align}
Combining eqs.~\eqref{eq:bound_Fn_plus_H_eps} and \eqref{eq:bounds_binomial_crude}, we have 
\begin{align}
    \nonumber
    \frac{1}{2^n}\sum_{l=0}^n \indi\left\{\left|l - \frac{n}{2}\right| > \frac{n \delta}{2}\right\} \binom{n}{l} \exp\{n F_n(q_l)\}
    &\leq \frac{1}{2^n}\sum_{l=0}^n \indi\left\{\left|l - \frac{n}{2}\right|> \frac{n \delta}{2}\right\} \exp\{n(\log 2  - \eps)\}, \\ 
    \label{eq:ub_2nd_mom_large_q}
    &\leq n \exp\{- n \eps\}.
\end{align}
Let $\sigma \in (0, \gamma)$.
By hypothesis $(i)$, we get that for $n$ large enough 
$F_n''(q)\leq(1-\gamma+\sigma)$ for all $|q| \leq \delta$.
Since $F_n(0) = 0$ and $F_n'(0) = 0$, this implies
$F_n(q)\leq(1-\gamma+\sigma) q^2/2$ for all $|q| \leq \delta$.
Therefore,
\begin{align}
    \label{eq:ub_2nd_mom_small_q_1}
    \frac{1}{2^n}\sum_{l=0}^n \indi\left\{\left|l - \frac{n}{2}\right| \leq \frac{n \delta}{2}\right\} \binom{n}{l} \exp\{n F_n(q_l)\}
    &\leq \frac{1}{2^n} \sum_{l=0}^n \binom{n}{l} e^{\frac{n(1-\gamma+\sigma)}{2} q_l^2}.
\end{align}
Recall that $q_l = 2 (l/n) - 1$.
The right-hand side of eq.~\eqref{eq:ub_2nd_mom_small_q_1} can now be analyzed with standard extensions of Laplace's method. 
We use here the following statement, which is a consequence of the proof of Lemma~2 of \cite{achlioptas2002asymptotic}.
\begin{lemma}[\cite{achlioptas2002asymptotic}]
    \label{lemma:achlioptas}
    There exists $B, C > 0$ such that the following holds.
    Let $G$  a real analytic positive function on $[0,1]$, and define for $\alpha \in [0,1]$: 
    \begin{align*}
        g(\alpha) \coloneqq \frac{G(\alpha)}{\alpha^\alpha (1-\alpha)^{1-\alpha}}. 
    \end{align*} 
    If there exists $\alpha_{\max} \in (0,1)$ a strict global maximum of $g$ in $[0,1]$ such that $g''(\alpha_{\max}) < 0$, then for sufficiently large $n$: 
    \begin{align*}
        B \cdot\frac{g(\alpha_{\max})^{n+1/2}}{\sqrt{-g''(\alpha_{\max})}} \leq \sum_{l=0}^n \binom{n}{l} G(l/n)^n\leq C \cdot \frac{g(\alpha_{\max})^{n+1/2}}{\sqrt{\alpha_{\max}(1-\alpha_{\max})(-g''(\alpha_{\max}))}}.
    \end{align*}
\end{lemma}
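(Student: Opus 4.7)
The plan is to combine Stirling's approximation for the binomial coefficient with the classical one-dimensional Laplace method. For $\alpha = l/n$ staying in a compact subset of $(0,1)$, Stirling's formula yields the uniform asymptotic
\begin{equation*}
    \binom{n}{l} = \frac{1 + o(1)}{\sqrt{2\pi n \alpha(1-\alpha)}} \cdot \frac{1}{[\alpha^\alpha(1-\alpha)^{1-\alpha}]^n},
\end{equation*}
so that multiplication by $G(l/n)^n$ cancels the denominator factor exactly and reveals the summand as $g(l/n)^n/\sqrt{2\pi n \alpha(1-\alpha)}$. The full sum is then a Riemann sum (step $1/n$) approximating $\sqrt{n/(2\pi)} \int_0^1 g(\alpha)^n / \sqrt{\alpha(1-\alpha)} \, \rd \alpha$.

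Since $\alpha_{\max} \in (0,1)$ is a strict global maximum of the continuous function $g$ on $[0,1]$, one can fix $\delta > 0$ small and $\eta > 0$ such that $g(\alpha) \leq g(\alpha_{\max}) - \eta$ whenever $|\alpha - \alpha_{\max}| \geq \delta$. The contribution to the sum of indices $l$ with $|l/n - \alpha_{\max}| \geq \delta$ is then bounded crudely by $(n+1)(g(\alpha_{\max})-\eta)^n$, exponentially smaller than the expected order $g(\alpha_{\max})^{n+1/2}$, and hence irrelevant for both bounds. On the remaining window $|l/n - \alpha_{\max}| \leq \delta$, Stirling applies uniformly, and Taylor-expanding at the critical point (using $g'(\alpha_{\max}) = 0$) gives
\begin{equation*}
    g(\alpha)^n = g(\alpha_{\max})^n \exp\left\{-\frac{n(-g''(\alpha_{\max}))}{2 g(\alpha_{\max})}(\alpha - \alpha_{\max})^2 + O(n|\alpha - \alpha_{\max}|^3)\right\}.
\end{equation*}
Passing the Riemann sum to the Gaussian integral and collecting the $(2\pi n \alpha_{\max}(1-\alpha_{\max}))^{-1/2}$ prefactor yields
\begin{equation*}
    \sum_{l=0}^n \binom{n}{l} G(l/n)^n = (1 + o(1)) \cdot \frac{g(\alpha_{\max})^{n+1/2}}{\sqrt{\alpha_{\max}(1-\alpha_{\max})(-g''(\alpha_{\max}))}}.
\end{equation*}
This directly matches the upper bound with $C$ close to $1$; for the lower bound, note that $\alpha_{\max}(1-\alpha_{\max}) \leq 1/4$ implies the right-hand side is at least $2 g(\alpha_{\max})^{n+1/2}/\sqrt{-g''(\alpha_{\max})}$, so any universal $B \leq 1$ suffices.

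The main technical obstacle is the simultaneous uniform control of the Stirling and Taylor errors across the localization. This is handled by a two-scale truncation: restrict first to $|\alpha - \alpha_{\max}| \leq \delta$ (where Stirling is uniform and $g$ stays strictly above competing values), then restrict further to $|\alpha - \alpha_{\max}| \leq n^{-2/5}$ (where the cubic Taylor remainder $O(n|\alpha-\alpha_{\max}|^3)$ is $o(1)$), and treat the intermediate annulus via the local quadratic upper bound $g(\alpha) \leq g(\alpha_{\max}) e^{-c(\alpha-\alpha_{\max})^2}$ valid near the maximum, which shows it contributes a negligible $g(\alpha_{\max})^n e^{-cn^{1/5}}$. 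Real-analyticity of $G$ is convenient for uniformly bounding higher derivatives, but $C^3$ regularity of $G$ (equivalently of $g$) already suffices for the argument.
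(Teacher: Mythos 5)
Your argument is correct and is essentially the same Laplace--Stirling computation that the paper delegates to Appendix~A of \cite{achlioptas2002asymptotic} (the paper itself does not reprove the lemma; it only notes that the explicit dependence of the constants on $\alpha_{\max}$ and $g''(\alpha_{\max})$ can be read off from that proof). Your two-scale localization, the Taylor expansion of $\log g$ at the critical point, and the observation that $\alpha_{\max}(1-\alpha_{\max})\le 1/4$ (so that the asymptotic $\sim g(\alpha_{\max})^{n+1/2}/\sqrt{\alpha_{\max}(1-\alpha_{\max})(-g''(\alpha_{\max}))}$ gives both bounds with universal $B,C$) are precisely the ingredients of that reference.
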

\noindent 
\textbf{Remark --} Lemma~\ref{lemma:achlioptas} is stated in \cite{achlioptas2002asymptotic}
as 
\begin{align*}
    C_1 \cdot g(\alpha_{\max})^n \leq \sum_{l=0}^n \binom{n}{l} G(l/n)^n\leq C_2 \cdot g(\alpha_{\max})^{n},
\end{align*}
where the constants $C_1, C_2$ might depend on $\alpha_{\max}$ and $g(\alpha_{\max})$. 
Their proof (see Appendix~A of \cite{achlioptas2002asymptotic}) reveals the dependency of $C_1, C_2$ on $\alpha_{\max}$ and $g''(\alpha_{\max})$, which we make explicit here.

\myskip
We apply Lemma~\ref{lemma:achlioptas} in eq.~\eqref{eq:ub_2nd_mom_small_q_1}, with 
\begin{align*}
    G(x) \coloneqq \frac{1}{2} e^{\frac{(1-\gamma+\sigma) (2 x - 1)^2}{2}}
\end{align*}
Let
\begin{align*}
    g(x) \coloneqq \frac{G(x)}{x^x (1-x)^{1-x}} = \frac{e^{\frac{(1-\gamma+\sigma) (2 x - 1)^2}{2}}}{2 x^x (1-x)^{1-x}}.
\end{align*}
It is clear that $g(x) = g(1-x)$ for all $x \in [0,1]$, and moreover
\begin{align*}
    \frac{1}{2}\frac{\rd}{\rd x} (\log g)(x) &= - (1-\gamma+\sigma)(1-2x) + \arctanh(1-2x).
\end{align*}
Since $\arctanh(u) \geq u$ for all $u \in [0,1)$, we get that for all $x \in (0,1/2]$:
\begin{align*}
    \frac{\rd}{\rd x} (\log g)(x) &\geq 2(\gamma-\sigma) (1-2x).
\end{align*}
Combining this with the symmetry $g(x) = g(1-x)$, we obtain that $g$
has a strict global maximum in $x = 1/2$, and we compute $g(1/2) = 1$.
Moreover, we get by direct computation that $g''(1/2) = - 4(\gamma-\sigma)  < 0$.
All in all, we reach that for $n$ large enough:
\begin{align}
    \label{eq:ub_2nd_mom_small_q_2}
     \frac{1}{2^n} \sum_{l=0}^n \binom{n}{l} e^{-\frac{n(1-\gamma)}{2} q_l^2} \leq \frac{C}{\sqrt{\gamma-\sigma}}.
\end{align}
Combining eqs.~\eqref{eq:ub_2nd_mom_large_q},\eqref{eq:ub_2nd_mom_small_q_1} and \eqref{eq:ub_2nd_mom_small_q_2}, we get:
\begin{align*}
    \limsup_{n \to \infty} \frac{1}{2^n}\sum_{l=0}^n \binom{n}{l} \exp\{n F_n(q_l)\} &\leq \limsup_{n \to \infty} [n e^{-n \eps} + C (\gamma-\sigma)^{-1/2}] = C (\gamma-\sigma)^{-1/2}.
\end{align*}
Letting $\sigma \downarrow 0$ ends the proof of Lemma~\ref{lemma:laplace_discrete}. $\qed$

\subsection{Log-Sobolev inequality for the conditioned law of two correlated \texorpdfstring{$\GOE(d)$}{}}
\label{subsec:lsi}

In this section, we start by reminders on log-Sobolev inequalities, before proving such a property 
for the law of eq.~\eqref{eq:gibbs_q}, and finally proving Lemma~\ref{lemma:conc_moments_Pqkappa}.

\subsubsection{Log-Sobolev inequalities and concentration of measure}\label{subsubsec:lsi}

\begin{definition}\label{def:lsi}
    Let $d \geq 1$.
    A probability measure $\mu \in \mcM_1^+(\bbR^d)$ is said to satisfy the \emph{Logarithmic Sobolev Inequality} (LSI) with constant $c > 0$ if, for any differentiable function 
    $f$ in $L^2(\mu)$, we have 
    \begin{equation}
        \label{eq:lsi}
        \int f^2 \log \frac{f^2}{\int f^2 \, \rd \mu} \rd \mu \leq 2 c \int \|\nabla f\|_2^2 \, \rd \mu.
    \end{equation}
\end{definition}
\noindent
We refer the reader to~\cite{guionnet2009large,anderson2010introduction}
for more on the theory of log-Sobolev inequalities and their applications to concentration results in random matrix theory.
A particularly useful consequence of the LSI is the following. 
\begin{lemma}[Herbst]\label{lemma:herbst}
   Assume that $\mu \in \mcM_1^+(\bbR^d)$ satisfies the LSI with constant $c$. Let $G~:~\bbR^d \to \bbR$ be a Lipschitz function, with Lipschitz constant $\|G\|_L$. 
   Then for all $\lambda \in \bbR$: 
   \begin{align}\label{eq:herbst_MGF}
    \EE_\mu \left[e^{\lambda[G - \EE G]}\right] \leq \exp\left\{\frac{c \|G\|_L^2\lambda^2}{2}\right\}.
   \end{align}
   Therefore, for all 
   $\delta > 0$: 
   \begin{equation*}
    \mu(|G - \EE G| \geq \delta) \leq 2 \exp\left\{-\frac{\delta^2}{2 c \|G\|_L^2}\right\}.
   \end{equation*}
\end{lemma}
\noindent
\textbf{Remark --} Notice that Lemma~\ref{eq:herbst_MGF} implies that 
$\|G - \EE G\|_{\psi_2} \leq C \sqrt{c} \|G\|_L$ for some absolute constant $C > 0$,
and moreover (by Taylor expansion close to $\lambda = 0$) we have $\Var(G) \leq c \|G\|_L^2$.

\myskip
Finally, we will use that a necessary condition for a measure to satisfy the LSI is the so-called \emph{Bakry-Emery (BE)} condition.
\begin{theorem}[Theorem~4.4.17 of \cite{anderson2010introduction}]\label{thm:be_implies_lsi}
    Let $d \geq 1$ and $\Phi : \bbR^d \to \bbR$ a $\mcC^2$ function. 
    Assume that $\Phi$ satisfies the Bakry-Emery condition:
    \begin{equation*}
         \Hess \, \Phi (x) \succeq \frac{1}{c} \Id_d,
    \end{equation*}
    for all $x \in \bbR^d$, for some $c > 0$. 
    Then the measure
    \begin{equation*}
        \mu_\Phi(\rd x) \coloneqq \frac{1}{\mcZ} e^{-\Phi(x)} \rd x
    \end{equation*}
    satisfies the LSI with constant $c$.
\end{theorem}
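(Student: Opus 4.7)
The plan is to prove this by the classical Bakry--Émery semigroup method. Write $\mu \coloneqq \mu_\Phi$ and introduce the overdamped Langevin generator $L \coloneqq \Delta - \langle \nabla \Phi, \nabla \cdot \rangle$: integration by parts, $\int f L g \, \rd \mu = -\int \langle \nabla f, \nabla g \rangle \, \rd \mu$, shows that $\mu$ is reversible and invariant for the associated Markov semigroup $(P_t)_{t \geq 0}$. I introduce the carré du champ $\Gamma(f) \coloneqq |\nabla f|^2$ and its iterate $\Gamma_2(f) \coloneqq \tfrac{1}{2}(L \Gamma(f) - 2 \Gamma(f, Lf))$, where $\Gamma(f, g) \coloneqq \langle \nabla f, \nabla g\rangle$. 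A direct computation (the Bochner--Weitzenböck identity adapted to $L$) gives pointwise
\begin{equation*}
    \Gamma_2(f) = \|\Hess f\|_{\mathrm{HS}}^2 + \langle (\Hess \Phi) \nabla f, \nabla f \rangle,
\end{equation*}
so the hypothesis $\Hess \Phi \succeq (1/c) \Id_d$ immediately yields the curvature--dimension inequality $\Gamma_2(f) \geq (1/c)\, \Gamma(f)$ for every smooth $f$.

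The next step is to upgrade this infinitesimal bound into a gradient estimate along the semigroup. For fixed $t > 0$ and a smooth, bounded $f$, set $\psi(s) \coloneqq P_s(\Gamma(P_{t-s} f))$ for $s \in [0, t]$. Using $\partial_s P_s = L P_s$ together with the identity $L \Gamma - 2 \Gamma(\cdot, L \cdot) = 2 \Gamma_2$, one finds $\psi'(s) = 2 P_s(\Gamma_2(P_{t-s} f)) \geq (2/c) \psi(s)$. Gronwall's inequality and the boundary values $\psi(0) = P_t \Gamma(f)$, $\psi(t) = \Gamma(P_t f)$ then yield the pointwise commutation estimate
\begin{equation*}
    \Gamma(P_t f) \leq e^{-2t/c}\, P_t \Gamma(f).
\end{equation*}

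To deduce the LSI, I use the entropy-dissipation argument. For a positive smooth bounded $g$ with $\int g \, \rd \mu = 1$, set $\phi(t) \coloneqq \int P_t g \log P_t g \, \rd \mu$. The algebraic identity $L(u \log u) = (1 + \log u) L u + \Gamma(u)/u$ together with $\int L u \, \rd \mu = 0$ yields the de Bruijn formula $\phi'(t) = -\int \Gamma(P_t g)/P_t g \, \rd \mu \eqqcolon -I(P_t g)$. A standard consequence of the gradient bound above (combined with Jensen's inequality and the invariance of $\mu$ under $P_t$) is the Fisher information contraction $I(P_t g) \leq e^{-2t/c} I(g)$. Integrating $\phi'$ from $0$ to $\infty$ and using $\phi(\infty) = 0$ (by convergence of $P_t g$ to $1$) gives $\int g \log g \, \rd \mu \leq (c/2)\, I(g)$; substituting $g \leftarrow f^2 / \int f^2 \rd \mu$ and using the pointwise relation $\Gamma(f^2)/f^2 = 4 \Gamma(f)$ yields exactly the inequality of Definition~\ref{def:lsi} with constant $c$.

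The main technical obstacle is the rigorous justification of these semigroup manipulations when $\Phi$ is only assumed $\mcC^2$: existence, smoothing and strong continuity of $P_t$ on $L^2(\mu)$, exchange of derivatives and integrals in the $\psi(s)$ computation, and validity of the pointwise identities up to the boundary $t = 0$ and $t = \infty$. Fortunately, the condition $\Hess \Phi \succeq (1/c) \Id$ forces at least quadratic growth of $\Phi$ at infinity, so $\mu$ has sub-Gaussian tails and the Langevin semigroup is well-defined on $L^2(\mu)$; a standard truncation and mollification argument extends the identities above from a dense class of smooth, compactly supported, strictly positive test functions to all $g \in L^2(\mu)$ with finite entropy. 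A secondary technical point is the Fisher information contraction itself, which I would prove by first upgrading the $L^2$ gradient bound to the stronger $L^1$ form $|\nabla P_t u| \leq e^{-t/c} P_t |\nabla u|$ (another Gronwall argument along the semigroup) and then combining it with the Cauchy--Schwarz inequality $(P_t |\nabla u|)^2 \leq P_t(u) \cdot P_t(|\nabla u|^2 / u)$ before integrating against $\mu$.
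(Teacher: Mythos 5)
The paper does not prove this result: it cites it directly as Theorem~4.4.17 of \cite{anderson2010introduction}, and the cited source does supply a proof via exactly the Bakry--\'Emery $\Gamma_2$-calculus that you use. Your argument is correct, and it is the standard semigroup proof of the Bakry--\'Emery criterion: Bochner's identity gives $\Gamma_2(f) = \|\Hess f\|_{\mathrm{HS}}^2 + \langle (\Hess\Phi)\nabla f, \nabla f\rangle \geq (1/c)\Gamma(f)$, the Gronwall argument along $s \mapsto P_s \Gamma(P_{t-s}f)$ yields the commutation estimate $\Gamma(P_t f) \leq e^{-2t/c} P_t \Gamma(f)$, and integrating the de Bruijn identity $\phi'(t) = -I(P_t g)$ together with the exponential decay of Fisher information gives the LSI with the correct constant $2c$ on the right-hand side of eq.~\eqref{eq:lsi} after the substitution $g = f^2/\int f^2\,\rd\mu$. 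You are right to flag the regularity and tail-control issues as the remaining technical debt, and your observation that the Fisher-information contraction genuinely requires the stronger $L^1$ commutation $|\nabla P_t u| \leq e^{-t/c} P_t|\nabla u|$ (not merely the $L^2$ version) is the single subtlety that students most often miss; this is the point where $\Gamma_2 \geq (1/c)\Gamma$ is used a second time, via the identity $\partial_s P_s \sqrt{\Gamma(P_{t-s}f)} = P_s\bigl((\Gamma_2(u) - \Gamma(\sqrt{\Gamma(u)}))/\sqrt{\Gamma(u)}\bigr)$ with $u = P_{t-s}f$ and the pointwise bound $\Gamma_2(u) \geq \Gamma(\sqrt{\Gamma(u)}) + (1/c)\Gamma(u)$.
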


\subsubsection{\texorpdfstring{A log-Sobolev inequality for the law of eq.~\eqref{eq:gibbs_q}}{}}\label{subsubsec:lsi_Pqkappa}

We show the following lemma.
\begin{lemma}\label{lemma:lsi_Pqkappa}
   For any $q \in (-1,1)$ and $\kappa > 0$,
   the law $\langle \cdot \rangle_{q,\kappa}$ of eq.~\eqref{eq:gibbs_q} satisfies the LSI with constant $2(1+q)/d$.
\end{lemma}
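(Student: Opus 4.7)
The strategy is to apply the Bakry-Emery criterion (Theorem~\ref{thm:be_implies_lsi}) to the (log-concave) potential defining $\langle \cdot \rangle_{q,\kappa}$, and then incorporate the indicator of the spectral-norm ball by a convex-restriction argument. Writing $K \coloneqq \{(\bW, \bY) \in \mcS_d^2 \, : \, \|\bW\|_\op \leq \kappa, \, \|\bY\|_\op \leq \kappa\}$, the density of $\langle \cdot \rangle_{q,\kappa}$ in eq.~\eqref{eq:gibbs_q} is proportional to $e^{-\Phi(\bW, \bY)} \indi_K(\bW, \bY)$ with
\begin{equation*}
   \Phi(\bW, \bY) \coloneqq \frac{d}{4(1-q^2)} (\Tr[\bW^2] + \Tr[\bY^2]) - \frac{dq}{2(1-q^2)} \Tr[\bW \bY].
\end{equation*}
Two key facts drive the proof: (i) $\Phi$ is strongly convex on $\mcS_d^2$ with an explicit lower bound on its Hessian; (ii) the set $K$ is convex, since $\|\cdot\|_\op$ is a norm and $K$ is the Cartesian product of two spectral-norm balls.

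\textbf{Computing the Hessian.} I parametrize $\mcS_d^2$ by the free coordinates $(W_{ij}, Y_{ij})_{i \leq j}$, endowed with the Euclidean inner product used in Definition~\ref{def:lsi}. Using $\Tr[\bW \bY] = \sum_{i} W_{ii} Y_{ii} + 2 \sum_{i < j} W_{ij} Y_{ij}$, the Hessian of $\Phi$ is block-diagonal with two types of $2 \times 2$ blocks in the variables $(W_{ij}, Y_{ij})$:
\begin{equation*}
    \mathrm{diagonal\ blocks\ } (i = j): \ \ \frac{d}{2(1-q^2)} \begin{pmatrix} 1 & -q \\ -q & 1 \end{pmatrix},
    \qquad
    \mathrm{off}\text{-}\mathrm{diag.\ blocks\ } (i < j): \ \  \frac{d}{1-q^2} \begin{pmatrix} 1 & -q \\ -q & 1 \end{pmatrix}.
\end{equation*}
The eigenvalues of each block are $\frac{d(1\pm q)}{2(1-q^2)} = \frac{d}{2(1 \mp q)}$ for the diagonal type and $\frac{d}{1 \mp q}$ for the off-diagonal one. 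The global minimum is thus $\frac{d}{2(1+|q|)}$, attained on the diagonal blocks; in particular (for $q \geq 0$, or after the symmetry $\bY \mapsto -\bY$ which exchanges $q$ and $-q$ without affecting $K$ or $\|\nabla f\|_2$), $\Hess \Phi \succeq \frac{d}{2(1+q)} \, \Id$ in Euclidean coordinates on $\mcS_d^2$.

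\textbf{Incorporating the indicator.} Applied directly to the unconditional Gibbs measure $\propto e^{-\Phi}$ on $\mcS_d^2$, Theorem~\ref{thm:be_implies_lsi} already yields the LSI with constant $2(1+q)/d$. To transfer this to the conditioned law $\langle \cdot \rangle_{q,\kappa}$, I use a standard convex-restriction argument: define the smooth penalized potentials $\Phi_\eps(\bW, \bY) \coloneqq \Phi(\bW, \bY) + \eps^{-1} d_K(\bW, \bY)^2$, where $d_K$ is the Euclidean distance to $K$. Since $K$ is convex, $d_K^2$ is convex, so $\Hess \Phi_\eps \succeq \Hess \Phi \succeq \frac{d}{2(1+q)} \Id$. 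Bakry-Emery then gives the LSI for $\mu_\eps \propto e^{-\Phi_\eps}$ with the same constant $2(1+q)/d$, uniformly in $\eps$. As $\eps \downarrow 0$, $\mu_\eps$ converges weakly to $\langle \cdot \rangle_{q,\kappa}$, and a routine lower-semicontinuity argument for the entropy/Fisher-information functionals (applied to bounded Lipschitz test functions $f$, by density) transfers the LSI to the limiting conditioned measure.

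\textbf{Main obstacle.} The algebraic Hessian computation and the Bakry-Emery step are routine; the only delicate point is the limiting argument for the hard constraint. An equivalent (and perhaps cleaner) route is to invoke the well-known extension of Theorem~\ref{thm:be_implies_lsi} stating that any log-concave density of the form $e^{-\Phi}$ restricted to a convex set inherits the Bakry-Emery LSI constant of $\Phi$; this is a standard fact from the Brascamp-Lieb / convex-body literature and bypasses the penalization argument above.
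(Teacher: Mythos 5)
Your proof is correct in outline and takes a genuinely different route from the paper. The paper regularizes the constraint at the \emph{scalar} level: it defines a smoothed spectral potential $V_\eps(x) = -\log\big(\int_{-\kappa}^\kappa e^{-y^2/2}\phi_\eps(x-y)\,\rd y\big)$, shows $V_\eps'' \geq (1+\eps)^{-1}$ by strong log-concavity under convolution, and then invokes Klein's lemma to transfer scalar convexity of $V_\eps$ to matrix convexity of $\bW \mapsto \Tr V_\eps(\bW)$; the indicator $\indi\{\|\bW\|_\op \leq \kappa\}$ is recovered in the limit $\eps\downarrow 0$ through the pointwise convergence $V_\eps(x)\to x^2/2 + \infty\cdot\indi\{|x|>\kappa\}$. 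You bypass both of these spectral-calculus ingredients by noting that the constraint set $K$ is convex (a product of spectral-norm balls), so the conditioned law is a log-concave restriction of a \emph{globally quadratic} potential $\Phi$ whose Hessian you compute in two lines. Both approaches land on the same Hessian lower bound $\frac{d}{2(1+q)}$ (for $q\ge 0$) and hence the same Bakry--Emery constant; your route is shorter and does not require knowing that eigenvalue-wise convexity lifts to trace convexity. What the paper's route buys is that its approximating potentials $\Tr V_\eps(\bW)$ are genuinely $\mcC^\infty$, so Theorem~\ref{thm:be_implies_lsi} applies verbatim at every $\eps$; your penalization $\Phi_\eps=\Phi+\eps^{-1}d_K^2$ is only $\mcC^{1,1}$ (the gradient of $d_K^2$ is Lipschitz but not continuously differentiable across $\partial K$), so to invoke Theorem~\ref{thm:be_implies_lsi} literally you would need one extra mollification of $d_K^2$ by a positive kernel (which preserves convexity) before letting the penalty blow up. You flag this as the delicate point; it is a genuine, if minor, gap as written, and the ``standard fact'' you invoke in your last paragraph (log-concave densities on convex bodies inherit the Bakry--Emery constant) is indeed available in the literature and is the cleanest fix. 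One last remark that applies to both your proof and the paper's: the Hessian bound only yields constant $\frac{2(1+|q|)}{d}$, and the $\bY\mapsto -\bY$ symmetry forces the sharp LSI constant to be even in $q$; the statement $\frac{2(1+q)}{d}$ is therefore only correct for $q\ge 0$. This does not affect the paper's applications (which use $q\ge 0$ after exploiting evenness of $G_d$), but it is worth being aware of.
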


\begin{proof}[Proof of Lemma~\ref{lemma:lsi_Pqkappa}]
    Let $\eps > 0$. We denote $\phi_\eps(x) \coloneqq e^{-x^2/(2\eps)}/\sqrt{2\pi\eps}$.
    We define 
    \begin{equation*}
        V_\eps(x) \coloneqq - \log \left(\int_{-\kappa}^\kappa \rd y \, e^{-y^2/2} \, \phi_\eps(x-y)\right).
    \end{equation*}
    \noindent
    \textbf{Reminders on log-concavity --} 
    A real positive integrable function $p$ is said to be \emph{strongly log-concave} with variance parameter $\sigma^2$ (denoted $\SLC(\sigma^2)$)
    if $p(x) = \phi_{\sigma^2}(x) \cdot e^{\varphi(x)}$, for some concave function $\varphi : \bbR \to [-\infty, \infty)$.
    We refer the reader to \cite{saumard2014log} for properties of log-concave and strongly log-concave functions and probability distributions.
    It is clear that $x \mapsto e^{-x^2/2} \indi\{|x| \leq \kappa\}$ is $\SLC(1)$,
    and that $\phi_\eps$ is $\SLC(\eps)$. 
    By Theorem~3.7 of \cite{saumard2014log}, if $f$ is $\SLC(\sigma_1^2)$ and $g$ is $\SLC(\sigma_2^2)$, 
    their convolution $f \star g$ is $\SLC(\sigma_1^2+\sigma_2^2)$. 
    Therefore, $e^{-V_\eps}$ is $\SLC(1 + \eps)$, which implies (since $V_\eps$ is smooth) 
    that $V_\eps''(x) \geq (1+\eps)^{-1}$ for all $x \in \bbR$. Since $V_\eps$ is even,
    and 
    \begin{equation*}
        V_\eps(0) \geq - \log \int_{-\kappa}^\kappa \rd y \, \phi_\eps(y) \geq 0,
    \end{equation*}
    we get that $V_\eps(x) \geq x^2/[2(1+\eps)]$ for all $x \in \bbR$.
    We define $\mu_\eps$ as:
    \begin{align}\label{eq:def_mueps}
    \mu_\eps(\rd \bY, \rd \bW) &\coloneqq \frac{e^{-\frac{d}{2(1-q^2)} (\Tr V_\eps(\bW) + \Tr V_\eps(\bY)) + \frac{dq}{2(1-q^2)} \Tr[\bY \bW]} \rd \bY \rd \bW}{\int e^{-\frac{d}{2(1-q^2)} (\Tr V_\eps(\bW) + \Tr V_\eps(\bY)) + \frac{dq}{2(1-q^2)} \Tr[\bY \bW]} \rd \bY \rd \bW}.
    \end{align} 
    Recall $\mcS_d$ is the set of symmetric $d \times d$ matrices.
    Since $x \mapsto V_\eps(x) - x^2/(2[1+\eps])$ is convex, by Klein's lemma (cf.\ Lemma~4.4.12 of \cite{anderson2010introduction} or Lemma~6.4 of \cite{guionnet2009large}), the function 
    $\bW \mapsto \Tr V_\eps(\bW) - \Tr[\bW^2]/(2[1+\eps])$ is also convex. 
    Thus, for all $\bW \in \mcS_d$: 
    \begin{align*}
        \Hess \, V_\eps(\bW) \succeq \frac{1}{1+\eps} \Id_{\mcS_d}.
    \end{align*}
    All in all we get for any $\bW$ and $\bY$:
    \begin{align*}
       \Hess \left[\frac{d}{2(1-q^2)} (\Tr V_\eps(\bW) + \Tr V_\eps(\bY)) - \frac{dq}{2(1-q^2)} \Tr[\bY \bW]\right]  
       \succeq \frac{d}{2(1-q^2)} 
       \begin{pmatrix}
         \frac{\Id_{\mcS_d}}{1+\eps} & - q \Id_{\mcS_d} \\ 
         -q \Id_{\mcS_d} & \frac{\Id_{\mcS_d}}{1+\eps}
       \end{pmatrix},
    \end{align*}
    which means
    \begin{align*}
        \lambda_\mathrm{min} \left(\Hess \left[\frac{d}{2(1-q^2)} (\Tr V_\eps(\bW) + \Tr V_\eps(\bY)) - \frac{dq}{2(1-q^2)} \Tr[\bY \bW]\right]\right) 
        &\geq \frac{d(1-q-\eps q)}{2(1+\eps)(1-q^2)}.
    \end{align*}
    Therefore, by Theorem~\ref{thm:be_implies_lsi}, $\mu_\eps$ satisfies the LSI with constant 
    \begin{equation*}
        \frac{2(1+\eps)(1-q^2)}{d(1-q-\eps q)} =  \frac{2(1+q)}{d} + \smallO_{\eps \to 0}(1).
    \end{equation*}
    Finally, notice that
    we have $V_\eps(x) \to_{\eps \to 0} V(x)$ pointwise, with $V(x)$ defined as: 
    \begin{align*}
        V(x) &\coloneqq 
        \begin{dcases}
            \frac{x^2}{2} &\textrm{ if } |x| < \kappa, \\
            \frac{\kappa^2}{2} + \log 2 &\textrm{ if } |x| = \kappa, \\
            +\infty &\textrm{ if } |x| > \kappa.
        \end{dcases}
    \end{align*}
    Since $V_\eps(x) \geq x^2/4$ for $\eps \leq 1/2$, we get by dominated convergence and the Portmanteau theorem 
    that $\mu_\eps \to_{\eps\to 0} \mu_0$ weakly, where $\mu_0$ is defined as in eq.~\eqref{eq:def_mueps}, replacing $V_\eps$ by $V$. 
    Because the set $\{\|\bW\|_\op = \kappa\}$ has Lebesgue measure zero, we further have that
    $\mu_0 = \langle \cdot \rangle_{q,\kappa}$.
    Since $\mu_\eps$ satisfies the LSI with constant $2(1+q)/d + \smallO_{\eps \to 0}(1)$, and weakly converges to $\langle \cdot \rangle_{q,\kappa}$ as $\eps \downarrow 0$, we deduce that $\langle \cdot \rangle_{q,\kappa}$ satisfies the LSI\footnote{
    By taking the limit of eq.~\eqref{eq:lsi} for well-behaved functions $f$ using weak convergence, and extending to all differentiable and square integrable functions by density. See e.g.\ the proof of Theorem~4.4.17 in \cite{anderson2010introduction} for details.}
    with constant $2(1+q)/d$.
\end{proof}

\subsubsection{Proof of Lemma~\ref{lemma:conc_moments_Pqkappa}}\label{subsubsec:proof_conc_moments_Pqkappa}

Let $P(X_1, X_2) = \sum_{p\geq 0}\sum_{i_1, \cdots, i_p \in \{1,2\}} a_{i_1 \cdots i_p} X_{i_1} \cdots X_{i_p}$.
We make use of the following elementary result.
\begin{lemma}[Lemma~6.2 of \cite{guionnet2009large}]\label{lemma:poly_lipschitz_Bop}
    Let $Q$ be a polynomial in two non-commutative variables.  
    Then, for any $\kappa > 0$, the function 
    \begin{equation*}
        (\bW , \bY) \in B_\op(\kappa) \times B_\op(\kappa) \mapsto \Tr[Q(\bW, \bY)]
    \end{equation*}
    is Lipschitz with respect to the Euclidean norm, with Lipschitz norm bounded by $\sqrt{d} C(Q, \kappa)$ for some constant $C(Q, \kappa) > 0$. 
    If $Q$ is a monomial of degree $p$, one can take $C(Q, \kappa) = p \kappa^{p-1}$.
\end{lemma}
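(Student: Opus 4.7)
The plan is to reduce to the case of a single monomial by the triangle inequality (and to record that the claim for a general polynomial follows by summing the Lipschitz constants over monomials, which only inflates $C(Q,\kappa)$), then to exploit a standard telescoping identity together with the basic trace/operator--norm inequalities for products of matrices.

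\textbf{Reduction and telescoping.} Assume $Q(X_1,X_2) = X_{i_1}X_{i_2}\cdots X_{i_p}$ is a monomial of degree $p$, where $(i_1,\dots,i_p) \in \{1,2\}^p$. Write $(X_1,X_2) = (\bW,\bY)$ and $(X_1',X_2')=(\bW',\bY')$. The first step is the usual telescoping decomposition
\begin{equation*}
    X_{i_1}\cdots X_{i_p} - X'_{i_1}\cdots X'_{i_p}
    = \sum_{k=1}^{p} X_{i_1}\cdots X_{i_{k-1}}(X_{i_k}-X'_{i_k})X'_{i_{k+1}}\cdots X'_{i_p},
\end{equation*}
so that by the triangle inequality it suffices to control the absolute value of the trace of each summand.

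\textbf{Bounding each summand.} For a fixed $k$, I would use cyclicity of the trace together with the inequalities $|\Tr[AB]| \leq \|A\|_{\mathrm{HS}} \|B\|_{\mathrm{HS}}$, $\|AB\|_{\mathrm{HS}} \leq \|A\|_{\op}\|B\|_{\mathrm{HS}}$, and $\|A\|_{\mathrm{HS}} \leq \sqrt{d}\,\|A\|_{\op}$. Cycling so that $X_{i_k}-X'_{i_k}$ sits first:
\begin{align*}
    \bigl|\Tr\bigl[X_{i_1}\cdots (X_{i_k}-X'_{i_k})\cdots X'_{i_p}\bigr]\bigr|
    &\leq \|X_{i_k}-X'_{i_k}\|_{\mathrm{HS}}\cdot \bigl\|X'_{i_{k+1}}\cdots X'_{i_p}X_{i_1}\cdots X_{i_{k-1}}\bigr\|_{\mathrm{HS}} \\
    &\leq \sqrt{d}\,\kappa^{p-1}\,\|X_{i_k}-X'_{i_k}\|_{\mathrm{HS}},
\end{align*}
using that each of the $p-1$ remaining factors has operator norm at most $\kappa$.

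\textbf{Assembling.} Summing over $k$, letting $m_a$ denote the number of indices with $i_k=a$ for $a\in\{1,2\}$ (so $m_1+m_2=p$), I would obtain
\begin{equation*}
    |\Tr[Q(\bW,\bY)]-\Tr[Q(\bW',\bY')]|
    \leq \sqrt{d}\,\kappa^{p-1}\bigl(m_1\|\bW-\bW'\|_{\mathrm{HS}} + m_2\|\bY-\bY'\|_{\mathrm{HS}}\bigr).
\end{equation*}
A Cauchy--Schwarz step gives $m_1 a+m_2 b \leq \sqrt{m_1^2+m_2^2}\sqrt{a^2+b^2}\leq p\sqrt{a^2+b^2}$, which yields a Lipschitz constant of the form $\sqrt{d}\cdot p\kappa^{p-1}$ (up to an absolute constant absorbing the equivalence between $\|\cdot\|_{\mathrm{HS}}$ and the coordinate Euclidean norm on $\mcS_d$, which differs only by a factor of $\sqrt{2}$ on the off-diagonal entries). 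For a general polynomial, the triangle inequality on monomials then gives the stated conclusion with a constant $C(Q,\kappa)$ that is a finite sum of terms $p\kappa^{p-1}|a_{i_1\cdots i_p}|$, which is also exactly the form needed to feed into Lemma~\ref{lemma:conc_moments_Pqkappa}.

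\textbf{Main subtlety.} There is no serious obstacle; the only point requiring care is keeping the $\sqrt{d}$ factor sharp. A naive use of $|\Tr[A]|\leq d\|A\|_{\op}$ would lose a full factor of $d$, which would not suffice to obtain a $\sqrt{1+q}$ (rather than $\sqrt{d(1+q)}$) bound in Lemma~\ref{lemma:conc_moments_Pqkappa} after combining with Herbst's lemma (Lemma~\ref{lemma:herbst}) and the LSI constant $2(1+q)/d$ from Lemma~\ref{lemma:lsi_Pqkappa}. Using the Hilbert--Schmidt inequality once and the $\sqrt{d}\|\cdot\|_{\op}$ bound on the remaining product of $p-1$ factors is what produces the correct $\sqrt{d}$ scaling.
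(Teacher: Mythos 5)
The paper does not actually prove this lemma; it cites it directly as Lemma 6.2 of Guionnet's lecture notes. Your proof is correct and reproduces the standard argument used in that reference: telescoping the difference of monomials, applying Cauchy--Schwarz for the trace inner product to isolate one Hilbert--Schmidt factor, bounding the remaining product of $p-1$ operators by $\kappa^{p-1}$ in operator norm, and converting via $\|\cdot\|_{\mathrm{HS}}\leq\sqrt{d}\,\|\cdot\|_{\op}$. Your observation about keeping the $\sqrt{d}$ (rather than $d$) factor is exactly the point that makes the downstream application in Lemma~\ref{lemma:conc_moments_Pqkappa} work, and the possible factor of $\sqrt{2}$ between the Frobenius and coordinate Euclidean norms on $\mcS_d$ is harmless since the lemma's constant is only needed up to absolute factors (and vanishes entirely if ``Euclidean norm'' is read as Frobenius, as in the reference).
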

\noindent
Notice that $\supp(\langle \cdot \rangle_{q,\kappa}) \subseteq B_\op(\kappa) \times B_\op(\kappa)$.
By Lemma~\ref{lemma:poly_lipschitz_Bop}, $f:(\bW, \bY) \mapsto \Tr P(\bW, \bY)$ is thus Lipschitz on the support of $\langle \cdot \rangle_{q,\kappa}$, 
with Lipschitz constant
\begin{equation*}
    \|f\|_L \leq \sqrt{d} \sum_{p \geq 0} p \cdot \kappa^{p-1} \sum_{i_1, \cdots, i_p \in \{1,2\}} |a_{i_1 \cdots i_p}|.
\end{equation*}
Combining Lemmas~\ref{lemma:lsi_Pqkappa} and \ref{lemma:herbst} finishes the proof of Lemma~\ref{lemma:conc_moments_Pqkappa}.
$\qed$

\section{Limitations of the second moment method}\label{sec:fail_2nd_moment}
We prove here Theorem~\ref{thm:fail_second_moment}, and discuss as well the technical hypothesis on which our statement relies.
Let $\kappa \in (0, 2]$ and $\tau > 0$. We start again from the second moment computation detailed in Section~\ref{subsec:proof_2nd_moment}, 
and more precisely from eq.~\eqref{eq:2nd_moment_2}, which we recall here:
\begin{align}\label{eq:2nd_moment_recall}
    \frac{\EE[Z_\kappa^2]}{\EE[Z_\kappa]^2} &= \frac{1}{2^n} \sum_{l=0}^n \binom{n}{l} \exp\{n G_d(q_l)\}, 
\end{align}
where for $q \in [-1,1]$:
\begin{align*}
    G_d(q) \coloneqq&  
    \frac{1}{n} \log \frac{\bbP\left[\|\bW\|_\op \leq \kappa \textrm{ and } \|q \bW + \sqrt{1-q^2} \bZ\|_\op \leq \kappa\right]}{\bbP[\|\bW\|_\op \leq \kappa]^2}.
\end{align*}
Using eq.~\eqref{eq:P_W_Z} we further have, for any $q \in (-1,1)$:
\begin{align}\label{eq:Gd_integral}
    G_d(q) &= \frac{1}{n} \log \frac{\int \indi\{\|\bW_1\|_\op, \|\bW_2\|_\op \leq \kappa\} e^{-\frac{d}{4(1-q^2)} (\Tr[\bW_1^2] + \Tr[\bW_2^2]) + \frac{dq}{2(1-q^2)} \Tr[\bW_1 \bW_2]} \rd \bW_1 \rd \bW_2}{\left(\int \rd \bW \, \indi\{\|\bW \|_\op \leq \kappa\} \, e^{-\frac{d}{4}\Tr[\bW^2]}\right)^2 (1-q^2)^{d(d+1)/4}}.
\end{align}
We can compute explicitly the limit of $G_d''(q)$ for $q$ close to $0$ as follows.
\begin{lemma}\label{lemma:limit_Gsecond}
    Recall the definition of $\tau_\f(\kappa)$ in eq.~\eqref{eq:tau_f}. We have
    \begin{equation*}
        \lim_{d \to \infty} G_d''(0) = \frac{\tau_\f(\kappa)}{\tau}.
    \end{equation*}
\end{lemma}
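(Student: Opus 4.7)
The plan is to evaluate at $q=0$ the decomposition $G_d''(q) = I_1(q) + I_2(q) + I_3(q)$ established in eq.~\eqref{eq:d2G_dq2_1}. At $q = 0$, the Gibbs measure $\langle \cdot \rangle_{0,\kappa}$ of eq.~\eqref{eq:gibbs_q} factorizes and makes $\bW, \bY$ independent copies of a $\GOE(d)$ matrix conditioned on $\{\|\cdot\|_\op \leq \kappa\}$. Immediately, $I_1(0) = d(d+1)/(2n) \to 1/(2\tau)$, while $I_2(0) = -(d/n)\langle \Tr \bW^2 \rangle_{0,\kappa}$ and $I_3(0) = (d^2/(4n))\,\Var_{\langle \cdot \rangle_{0,\kappa}}(\Tr[\bW\bY])$ (the $q$-linear term in $I_3$ vanishes at $q = 0$). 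For $I_2(0)$, Theorem~\ref{thm:lsd_constrained_GOE} together with dominated convergence (the spectrum is confined to $[-\kappa, \kappa]$) yields $(1/d)\langle \Tr \bW^2 \rangle_{0,\kappa} \to \int x^2 \, \mu_\kappa(\rd x) = \kappa^2(8-\kappa^2)/16$, using eq.~\eqref{eq:int_mukappa_x2}. Hence $I_2(0) \to -\kappa^2(8-\kappa^2)/(16\tau)$.

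For $I_3(0)$, I would first observe that $\langle \Tr[\bW\bY] \rangle_{0,\kappa} = 0$ by the symmetry $\bW \mapsto -\bW$, so the variance equals the second moment. The orthogonal conjugation invariance of the marginal law of $\bW$ forces its entry-covariance, viewed as an $O(d)$-equivariant bilinear form on $\mcS_d$, to take the form
\begin{equation*}
\EE[W_{ij} W_{kl}] = A \, \delta_{ij} \delta_{kl} + B \, (\delta_{ik} \delta_{jl} + \delta_{il} \delta_{jk})
\end{equation*}
for scalars $A, B$ depending on $d$ and $\kappa$. By independence of $\bW, \bY$ and a direct index computation,
\begin{equation*}
\EE\bigl[(\Tr[\bW\bY])^2\bigr] = d^2 A^2 + 4 d A B + 2(d^2 + d) B^2.
\end{equation*}
Evaluating the covariance formula also gives $\langle \Tr \bW^2 \rangle_{0,\kappa} = dA + d(d+1) B$, which combined with the previous paragraph implies that once $A$ is shown to be of lower order than $B$, one has $d B \to \kappa^2(8-\kappa^2)/16$.

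The main obstacle is establishing $A = \mcO(1/d^2)$, which would then give $\EE[(\Tr[\bW\bY])^2] \to 2 \, [\kappa^2(8-\kappa^2)/16]^2$. My plan is to extract this from the log-Sobolev inequality of Lemma~\ref{lemma:lsi_Pqkappa}: at $q = 0$ the LSI constant is $2/d$, and the linear statistic $\bW \mapsto \Tr \bW$ has Lipschitz norm of order $\sqrt{d}$ with respect to the Euclidean structure on $\mcS_d$. Herbst's bound (Lemma~\ref{lemma:herbst}) then gives $\Var(\Tr \bW) = \mcO(1)$, and since $\Var(\Tr \bW) = d^2 A + 2 d B$ with $d B$ bounded, this yields $d^2 A = \mcO(1)$. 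Summing the three contributions,
\begin{equation*}
\lim_{d \to \infty} G_d''(0) = \frac{1}{2\tau}\left[1 - 2 \cdot \frac{\kappa^2(8-\kappa^2)}{16} + \left(\frac{\kappa^2(8-\kappa^2)}{16}\right)^2\right] = \frac{1}{2\tau}\left[1 - \frac{\kappa^2(8-\kappa^2)}{16}\right]^2,
\end{equation*}
and the elementary identity $1 - \kappa^2(8-\kappa^2)/16 = (\kappa^2/4 - 1)^2$ rewrites this as $(\kappa^2/4 - 1)^4/(2\tau) = \tau_\f(\kappa)/\tau$, as desired.
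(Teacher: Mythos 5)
Your proof is correct, and it arrives at the right formula (indeed, it is more reliable than the paper's displayed eq.~\eqref{eq:limit_G20}, which contains a typographical error: the last term should read $\frac{1}{2}\left(\int \mu_\kappa^\star(\rd x)\, x^2\right)^2$, not $\frac{1}{2}\left(-\frac{1}{2}\int \mu_\kappa^\star(\rd x)\, x^2\right)^2$, in order to be consistent with eqs.~\eqref{eq:G20} and~\eqref{eq:term_2_G20} and to reproduce $\tau_\f(\kappa)/\tau$; your identity $\tfrac{1}{2\tau}(1-m)^2$ with $m = \kappa^2(8-\kappa^2)/16$ is the correct one). You and the paper agree on the decomposition $G_d''(0) = I_1(0) + I_2(0) + I_3(0)$, on the treatment of $I_1(0)$ and $I_2(0)$ via Theorem~\ref{thm:lsd_constrained_GOE}, and on the key role of the LSI in controlling $\Var(\Tr\bW) = \mcO(1)$. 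Where you diverge is the computation of $\Var(\Tr[\bW\bY])$: the paper diagonalizes $\bW = \bO\bLambda\bO^\T$, integrates out the Haar-distributed $\bO$ via explicit fourth moments $\EE[O_{ij}^2 O_{kl}^2]$ (eq.~\eqref{eq:moments_Haar}), and feeds in the eigenvalue statistics $\EE[\lambda_1^2]$ and $\EE[\lambda_1\lambda_2]$, whereas you note that $O(d)$-invariance of $\bbP_\kappa$ pins the entry covariance to the isotropic form $\EE[W_{ij}W_{kl}] = A\,\delta_{ij}\delta_{kl} + B(\delta_{ik}\delta_{jl}+\delta_{il}\delta_{jk})$ and compute $\EE[(\Tr[\bW\bY])^2]$ by a clean index contraction. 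These are two parameterizations of the same orthogonal-invariance argument; yours sidesteps the Weingarten-type moment bookkeeping entirely and makes the structure more transparent, at the minor cost of having to justify the isotropic covariance ansatz (which follows because $\Tr[\bX]\Tr[\bY]$ and $\Tr[\bX\bY]$ span the $O(d)$-invariant bilinear forms on $\mcS_d$). In both cases, the essential nontrivial input is the same LSI/Herbst bound $\Var(\Tr\bW) = \mcO(1)$: for you it kills the $A$-contribution via $d^2 A + 2dB = \mcO(1)$, for the paper it forces $d\,\EE[\lambda_1\lambda_2] \to -m$. I verified your index computations ($\EE[(\Tr[\bW\bY])^2] = d^2 A^2 + 4dAB + 2(d^2+d)B^2$, $\langle\Tr\bW^2\rangle = dA + d(d+1)B$, $\Var(\Tr\bW) = d^2A + 2dB$) and the Lipschitz estimate $\|\Tr(\cdot)\|_L = \sqrt{d}$ on $\mcS_d$; all check out.
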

\noindent
Lemma~\ref{lemma:limit_Gsecond} relies on the limiting spectral distribution theorem we established in Theorem~\ref{thm:lsd_constrained_GOE}, 
and is proven below. First, we establish how Lemma~\ref{lemma:limit_Gsecond}, 
alongside the following technical hypothesis,
ends the proof of Theorem~\ref{thm:fail_second_moment}.
\begin{hypothesis}\label{hyp:control_Gsecond}
    We assume that, for $G_d$ given in eq.~\eqref{eq:Gd_integral}:
    \begin{equation*}
        \lim_{\eps \downarrow 0}\limsup_{d \to \infty} \sup_{|q| \leq \eps} |G_d''(q) - G_d''(0)| = 0.
    \end{equation*}
\end{hypothesis}
\noindent
\textbf{Discussion of Hypothesis~\ref{hyp:control_Gsecond} --}
Hypothesis~\ref{hyp:control_Gsecond} states that $G_d''(q)$ is continuous in $q = 0$, uniformly in $d$ as $d \to \infty$.
We make two important remarks related to this hypothesis:
\begin{itemize}[leftmargin=*]
    \item First, note that $G_d$ in eq.~\eqref{eq:Gd_integral} can be interpreted as a large deviations rate function for the spectral norms of two $q$-correlated $\GOE(d)$ matrices $(\|\bW_1\|_\op, \|\bW_2\|_\op)$, 
    on the scale $d^2$. 
    In general, the large deviations rate function (in the weak topology) for the joint law of the spectral measures of two matrices $\bW_1$ and $\bW_2$
    drawn from a $\beta$-ensemble with an interacting potential proportional to $\Tr[\bW_1 \bW_2]$ has been established in~\cite{guionnet2004first}[Theorem~3.3]. 
    From these results, one can obtain the existence of the limit of $G_d(q)$ as $d \to \infty$ (which we call $G(q)$), as well as a variational formula for it.
    Under this framework, Hypothesis~\ref{hyp:control_Gsecond} would follow if $G_d''$ was shown to converge uniformly to $G''$ in a neighborhood of $q = 0$.
    \item A sufficient condition for Hypothesis~\ref{hyp:control_Gsecond} to hold is to establish that $\sup_{|q| \leq \eps} |G_d^{(3)}(q)| \leq C(\eps)$ as $d \to \infty$.
    The third derivative of $G_d$ can be explicitly calculated from eq.~\eqref{eq:Gd_integral}, similar to our computation of the second derivative in Section~\ref{sec:2nd_moment}.
    As discussed there, bounding the second derivative required demonstrating that $\Var_{\langle \cdot \rangle}[\Tr P[\bW_1, \bW_2]] = \mcO(1)$, where $P$ is a polynomial independent of $d$, and $\bW_1, \bW_2$ are two correlated $\GOE(d)$ matrices conditioned to have spectral norm at most $\kappa$ (see eq.~\eqref{eq:gibbs_q}).
    We established this bound in Lemma~\ref{lemma:conc_moments_Pqkappa} using classical concentration techniques.
    Note that $\langle \Tr[P(\bW_1, \bW_2)]\rangle = \Theta(d)$, so we established strong concentration properties for this quantity.
    For the third derivative, however, we now essentially need to show that
    \begin{equation}\label{eq:required_3rd_moment}
        \left\langle\left(\Tr P[\bW_1, \bW_2] - \langle \Tr P[\bW_1, \bW_2] \rangle\right)^3\right\rangle = \mcO\left(\frac{1}{d}\right).
    \end{equation}
    Unfortunately, the bound established in Lemma~\ref{lemma:conc_moments_Pqkappa} only yields eq.~\eqref{eq:required_3rd_moment} 
    with a right-hand side of $\mcO(1)$.
    Achieving a sharper bound would require an even more precise control over the statistics of
    $\Tr P[\bW_1, \bW_2]$
    under the law $\langle \cdot \rangle$ of eq.~\eqref{eq:gibbs_q}, 
    which we have not been able to achieve at present and leave as a direction for future work\footnote{
     A bound $\smallO_d(1)$ for the quantity of eq.~\eqref{eq:required_3rd_moment} can be shown if the joint density of $(\bW_1, \bW_2)$ is proportional to $\exp\{-d \Tr V[\bW_1, \bW_2]\}$, for some strongly convex and polynomial potentials $V$, as a consequence 
     of a central limit theorem for $\Tr P[\bW_1,\bW_2]$, see~\cite{guionnet2009large}[Chapter 9]. However even achieving this bound here appears non-trivial because our potential is singular as a consequence of the constraint $\indi\{\|\bW_a\|_\op \leq \kappa\}$ (see e.g.~\cite{borot2013asymptotic} for the case of single-matrix models).
    }.
\end{itemize}

\myskip
We come back to the proof of Theorem~\ref{thm:fail_second_moment}.
If we assume that $\tau < \tau_\f(\kappa)$, by  Lemma~\ref{lemma:limit_Gsecond} and Hypothesis~\ref{hyp:control_Gsecond}, there exists 
$\delta > 0$ and $\eps > 0$ (depending on $\tau, \kappa$) such that, for $d$ large enough: 
\begin{equation}\label{eq:lb_Gd2}
    \inf_{|q| \leq \eps} G_d''(q) \geq (1+\delta).
\end{equation}
Recall that $H(p) \coloneqq - p \log p - (1-p)\log (1-p)$ is the ``binary entropy'' function. We define $S(q) \coloneqq H[(1+q)/2] - \log 2$, 
and
\begin{align}\label{eq:def_Phid}
    \Phi_d(q) \coloneqq G_d(q) + S(q).
\end{align}
Since $S$ is a smooth function of $q$, and $S''(0) = -1$, from eq.~\eqref{eq:lb_Gd2} there exists new constants $(\eps, \delta) > 0$ 
such that
\begin{equation}\label{eq:lb_Phid2}
    \inf_{|q| \leq \eps} \Phi_d''(q) \geq \delta.
\end{equation}
Notice that $\Phi_d(0) = 0$ and $\Phi_d'(0) = 0$ (since $\Phi_d$ is an even function of $q$), so eq.~\eqref{eq:lb_Phid2} implies 
that for $d$ large enough: 
\begin{equation}\label{eq:lb_Phi}
     \inf_{|q| \leq \eps}\left[\Phi_d(q) - \frac{\delta q^2}{2}\right] \geq 0.
\end{equation}
Using the classical inequality that for any $l \in \{0, \cdots, n\}$:
\begin{equation*}
   \binom{n}{l} \geq \frac{1}{n+1} 2^{n H(l/n)},
\end{equation*}
we obtain from eq.~\eqref{eq:2nd_moment_recall}:
\begin{align*}
    \frac{\EE[Z_\kappa^2]}{\EE[Z_\kappa]^2} &\geq \frac{1}{n+1} \sum_{l=0}^n \exp\{n \Phi_d(q_l)\} \ageq \frac{1}{n+1} \sum_{\substack{0 \leq l \leq n \\ |q_l| \leq \eps}} \exp\left\{\frac{n\delta q_l^2}{2}\right\},
\end{align*}
where $q_l = 2 (l/n) - 1$, and we used eq.~\eqref{eq:lb_Phi} in $(\rm a)$.
Choosing $l \in \{0, \cdots, n\}$ such that $\eps/2 \leq |q_l| \leq \eps$, we reach (recall that $n/d^2 \to \tau$):
\begin{align*}
    \liminf_{d \to \infty} \frac{1}{d^2} \log \frac{\EE[Z_\kappa^2]}{\EE[Z_\kappa]^2} &\geq \frac{\delta \tau \eps^2}{8} > 0, 
\end{align*}
which ends the proof of Theorem~\ref{thm:fail_second_moment}. $\qed$

\myskip 
\textbf{Remark --} Notice that a statement akin to Theorem~\ref{thm:fail_second_moment} might still hold even if $\Phi_d''(0) < 0$ for large $d$, as long 
as $\Phi_d$ reaches its global maximum in a value $q$ which is far from $0$ as $d \to \infty$, as our 
argument can then be easily adapted to this setting.
As such, we do not know if $\tau_\f(\kappa)$ (which 
comes out of our local analysis around $q = 0$) is a sharp threshold for $\EE[Z_\kappa^2] \gg \EE[Z_\kappa]^2$.

\myskip
\begin{proof}[Proof of Lemma~\ref{lemma:limit_Gsecond} --]
We start from eq.~\eqref{eq:d2G_dq2_1}, which for $q = 0$ gives:
\begin{align}\label{eq:G20}
    G_d''(0) &= \frac{d(d+1)}{2n} - \frac{d}{n} \EE[\Tr \bW^2] + \frac{d^2}{4n} \textrm{Var}[\Tr[\bW \bW']].
\end{align}
In eq.~\eqref{eq:G20}, $\bW$ and $\bW'$ are sampled independently according to the law 
$\bbP_\kappa$ of $\bZ \sim \GOE(d)$ conditioned on $\|\bZ\|_\op \leq \kappa$, i.e.\
for any test function $\varphi$:
\begin{align}\label{eq:law_conditioned_GOE}
    \EE_{\bbP_\kappa}[\varphi(\bZ)] &= \frac{\int \, \varphi(\bZ) \, \indi\{\|\bZ\|_\op \leq \kappa\} e^{-\frac{d}{4} \Tr[\bZ^2]} \rd \bZ}{\int \, \indi\{\|\bZ\|_\op \leq \kappa\} e^{-\frac{d}{4} \Tr[\bZ^2]} \rd \bZ}.
\end{align}
We know that for $\bW \sim \bbP_\kappa$, the empirical spectral distribution $\mu_\bW$ weakly converges (a.s.) to $\mu_\kappa^\star$ given by Theorem~\ref{thm:lsd_constrained_GOE}.
Since $\int \mu_\bW(\rd x) x^2 = \int \mu_\bW(\rd x) x^2 \indi\{|x| \leq \kappa\}$, we have by the Portmanteau theorem 
and dominated convergence: 
\begin{align}\label{eq:term_1_G20}
    \lim_{d \to \infty} \frac{1}{d}\EE[\Tr \bW^2] &= \int \mu_\kappa^\star(\rd x) \, x^2 \, \indi\{|x| \leq \kappa\} = \int \mu_\kappa^\star(\rd x) \, x^2.
\end{align}
We now focus on the last term of eq.~\eqref{eq:G20}.
Notice that $\EE[\Tr[\bW \bW']] = \Tr[(\EE \bW)^2] = 0$, since $\EE \bW = 0$ because $\bbP_\kappa$ is symmetric under $\bW \leftrightarrow - \bW$.
Moreover, for any orthogonal matrix $\bO \in \mcO(d)$, $\bW \deq \bO \bW \bO^\T$ (as is directly seen from eq.~\eqref{eq:law_conditioned_GOE}), so that we further have:
\begin{align}\label{eq:variance_Haar_measure}
   \textrm{Var}[\Tr[\bW \bW']] = \EE[\Tr[\bW \bW']^2]= \EE_{\bO, \bLambda, \bLambda'}[\Tr[\bO \bLambda \bO^\T \bLambda']^2].
\end{align}
In eq.~\eqref{eq:variance_Haar_measure}, $\bLambda = \Diag(\{\lambda_i\})$ is a diagonal matrix containing the eigenvalues of $\bW$ (and similarly for $\bLambda'$), 
and $\bO$ is an orthogonal matrix sampled from the Haar measure on $\mcO(d)$, independently of $\bW, \bW'$.
Thus:
\begin{align}\label{eq:variance_Haar_measure_2}
   \textrm{Var}[\Tr[\bW \bW']] = \sum_{i,j,k,l} \EE[\lambda_i \lambda_k] \EE[\lambda_j \lambda_l] \EE[O_{ij}^2 O_{kl}^2].
\end{align}
The terms involving $\lambda_i$ eq.~\eqref{eq:variance_Haar_measure_2} can be computed using the permutation invariance of the law of $\{\lambda_i\}$ as well as the invariance under $\bLambda \leftrightarrow - \bLambda$. 
Concretely, for all $i \in [d]$:
\begin{align}\label{eq:EE_lambdai2}
    \EE[\lambda_i^2] = \EE[\lambda_1^2] = \frac{1}{d} \sum_{j=1}^d \EE[\lambda_j^2] = \frac{1}{d} \EE[\Tr \bW^2],
\end{align}
and for $i \neq j$:
\begin{align}\label{eq:EE_lambdailambdaj}
    \EE[\lambda_i \lambda_j] &= \EE[\lambda_1 \lambda_2] = \frac{1}{d-1} \EE\left[\lambda_1 \sum_{k \geq 2} \lambda_k\right]
    = \frac{1}{d(d-1)} \EE[(\Tr \bW)^2 - \Tr(\bW^2)].
\end{align}
The first moments of the matrix elements of a Haar-sampled orthogonal matrix are elementary (see e.g.\ \cite{banica2011polynomial} for general results):
\begin{align}\label{eq:moments_Haar}
    \EE[O_{ij}^2 O_{kl}^2] &= \begin{dcases}
        \frac{3}{d(d+2)} &(i = k \textrm{ and } j = l), \\
        \frac{1}{d(d+2)} &(i = k \textrm{ and } j \neq l, \textrm{ or } i \neq k \textrm{ and } j = l), \\
        \frac{d+1}{d(d-1)(d+2)} &(i \neq k \textrm{ and } j \neq l).
    \end{dcases}
\end{align}
Using eq.~\eqref{eq:moments_Haar} in eq.~\eqref{eq:variance_Haar_measure_2}, separating 
cases in the sum, we get:
\begin{align}\label{eq:variance_Haar_measure_3}
    \nonumber
   \textrm{Var}[\Tr[\bW \bW']] &= 
   \frac{3}{d(d+2)} \cdot d^2 \cdot \EE[\lambda_1^2]^2 + 
   \frac{1}{d(d+1)} \cdot 2 d^2 (d-1) \cdot \EE[\lambda_1^2] \EE[\lambda_1 \lambda_2] \\ 
    \nonumber
   &+ \frac{d+1}{d(d-1)(d+2)} \cdot d^2(d-1)^2 \cdot \EE[\lambda_1 \lambda_2]^2, \\ 
   &= [1 +\smallO_d(1)] \left(3\EE[\lambda_1^2]^2 + 2d \EE[\lambda_1 \lambda_2] \EE[\lambda_1^2] + d^2 \EE[\lambda_1 \lambda_2]^2 \right).
\end{align}
From eqs.~\eqref{eq:term_1_G20} and \eqref{eq:EE_lambdai2}, we have $\EE[\lambda_1^2] \to \EE_{\mu_\kappa^\star}[X^2]$ as $d \to \infty$.
Furthermore, by Lemma~\ref{lemma:conc_moments_Pqkappa}, $\EE[(\Tr \bW)^2] = \Var[\Tr \bW] = \mcO(1)$ as $d \to \infty$, 
so eq.~\eqref{eq:EE_lambdailambdaj} gives that $d \EE[\lambda_1 \lambda_2] \to - \EE_{\mu_\kappa^\star}[X^2]$ as $d \to \infty$.
Plugging these limits in eq.~\eqref{eq:variance_Haar_measure_3} we get:
\begin{align}\label{eq:term_2_G20}
   \textrm{Var}[\Tr[\bW \bW']] &= 2 \left(\int \mu_\kappa^\star(\rd x) \, x^2 \right)^2 + \smallO_{d\to \infty}(1).
\end{align}
Finally, combining eqs.~\eqref{eq:G20}, \eqref{eq:term_1_G20} and \eqref{eq:term_2_G20} we obtain (recall $n/d^2 \to \tau$):
\begin{align}\label{eq:limit_G20}
    \lim_{d \to \infty} G_d''(0) &= \frac{1}{\tau} \left[\frac{1}{2} - \int \mu_\kappa^\star(\rd x) \, x^2 + \frac{1}{2}\left(- \frac{1}{2} \int \mu_\kappa^\star(\rd x) \, x^2\right)^2 \right].
\end{align}
The integral in eq.~\eqref{eq:limit_G20} was already computed in eq.~\eqref{eq:int_mukappa_x2}: plugging its value in eq.~\eqref{eq:limit_G20} 
shows that $  \lim_{d \to \infty} G_d''(0) = \tau_\f(\kappa)/\tau$, which ends the proof of Lemma~\ref{lemma:limit_Gsecond}.
\end{proof}

\printbibliography

\newpage
\appendix 
\addtocontents{toc}{\protect\setcounter{tocdepth}{1}} %Only sections titles in the TOC for the appendix

\section{Contiguity with a planted model, and freezing of solutions}\label{sec:appendix_freezing}
We discuss briefly here Open Problem~\ref{op:freezing}, more specifically how freezing could be established, 
or at least conjectured, from the second moment computation.
For the symmetric binary perceptron (SBP), this was the path followed in~\cite{aubin2019storage}, before
freezing was established rigorously in~\cite{perkins2021frozen,abbe2022proof}.

\myskip 
Let us first fix some notations.
For the remainder of this section we assume that the margin $\kappa \in (0,2)$ is given and fixed.
We denote $\Sigma_n \coloneqq \{\pm 1\}^n$, and $\mcS_d$ the set of symmetric $d \times d$ matrices.
For $\eps, \eps' \in \Sigma_n$, we denote $R(\eps, \eps') \coloneqq (1/n) \sum_{i=1}^n \eps_i \eps'_i$ their overlap.
We let $\bH \coloneqq (\bW_1, \cdots, \bW_n)$, and denote $S(\bH) \coloneqq \{\eps \in \{\pm 1\}^n \, \textrm{s.t.} \, \|\sum_{i=1}^n \eps_i \bW_i\|_{\op} \leq \kappa \sqrt{n} \}$.
Recall that $Z_\kappa = |S(\bH)|$, see eq.~\eqref{eq:def_Zkappa}.
In this appendix only we call $\bbP_0$ the law of $\bW_1, \cdots, \bW_n \iid \GOE(d)$, which was denoted $\bbP$ in the rest of the manuscript.

\begin{definition}[Freezing]\label{def:freezing}
    Let $\tau = n/d^2 = \Theta(1)$ be large enough so that $Z_\kappa \geq 1$ with probability $1 - \smallO(1)$ as $d \to \infty$ (i.e.\ we are in the SAT phase).
    The set $S(\bH)$ of solutions is said to exhibit freezing
    if there exists $q_c \in (0,1)$ such that the following holds:
    \begin{align*}
        \plim_{n,d \to \infty} \frac{1}{n} \log |\{\eps' \in S(\bH) \, : \, R(\eps, \eps') \geq q_c\}| = 0,
    \end{align*}
    where the limit is in probability over $\bH$ and $\eps \sim \Unif(S(\bH))$.
\end{definition}
\noindent
Definition~\ref{def:freezing} captures a relatively weak notion of freezing: for a typical solution $\eps$, the number of solutions with overlap to $\eps$ exceeding $q_c$ is at most sub-exponential in $n$. 
In the SBP, one can prove a stronger property, namely that $\eps$ is completely isolated and thus the \emph{unique} solution with such overlap. 
Below we briefly outline the two main ingredients in the classical approach to proving freezing---as carried out for the SBP---and explain why these steps present difficulties in the context of random matrix discrepancy. 
We refer to the literature cited above for more background and references.

\myskip 
\textbf{Step 1: contiguity --}
Freezing of solutions is a property of $\Unif(S(\bH))$, the uniform measure over the set of solutions.
The classical approach to establish properties of this measure is to use a contiguity argument with a \emph{planted} version of the problem, 
which is often easier to study.
Recall that, for two sequences of probability distributions $\bbP_d$ and $\bbQ_d$ (that depend on $d$), we say that $\bbP_d$ is contiguous to $\bbQ_d$, denoted $\bbP_d \lhd \bbQ_d$, 
if for any sequence of measurable events $A_d$:
\begin{align}\label{eq:contiguity}
(\bbQ_d(A_d) \to 0 \textrm{ as } d \to \infty)
 \Rightarrow 
(\bbP_d(A_d) \to 0 \textrm{ as } d \to \infty).
\end{align}
In our setting, we define the following two models.
\begin{itemize}[leftmargin=*]
    \item \textbf{Random model} ($\bbP_{\ra}$): draw first $\bW_1, \cdots, \bW_n \iid \GOE(d)$, conditioned on $S(\bH) \neq \emptyset$.
    Draw then $\eps \sim \Unif(S(\bH))$.
    \item \textbf{Planted model} ($\bbP_{\pl}$): draw first $\eps \sim \Unif(\Sigma_n)$. Draw then $\bW_1, \cdots, \bW_n \iid \GOE(d)$, conditioned on 
    satisfying $\|\sum_i \eps_i \bW_i\|_\op \leq \kappa \sqrt{n}$.
\end{itemize}
$\bbP_\ra$ and $\bbP_\pl$ define two probability measures over $\mcS_d^n \times \Sigma_n$. In the planted distribution, 
we often denote $\eps = \eps^\star$.
In many problems, $\bbP_\pl$ turns out to be easier to study than $\bbP_\ra$, and moreover we have the general identity, 
for any $(\eps, \bH)$ such that $\eps \in S(\bH)$:
\begin{align}\label{eq:planted_random}
    \frac{\rd \bbP_{\pl}}{\rd \bbP_{\ra}}(\eps, \bH) &= \frac{Z_\kappa(\bH)}{\EE_0[Z_\kappa(\bH')]} \cdot \bbP_0(S(\bH') \neq 0).
\end{align}
When $\bbP_0(S(\bH) \neq 0) \to 1$ as $d \to \infty$, contiguity of $\bbP_{\pl}$ and $\bbP_{\ra}$ thus follows from concentration of $Z_\kappa$ around its mean. 
More generally, suppose that for some non-decreasing sequence $\alpha_d > 0$ there exists $M > 0$ such that
\begin{align}\label{eq:concentration_fenergy}
    \lim_{d \to \infty}\bbP_0\!\left(\frac{Z_\kappa}{\EE[Z_\kappa]} \geq e^{-M\alpha_d}\right) = 1.
\end{align}
Then one can show that any event with probability $\exp(-\omega(\alpha_d))$ under $\bbP_{\pl}$ must have probability $\smallO(1)$ under $\bbP_\ra$~\citep{perkins2021frozen}.
In particular, if $\alpha_d = \mcO(1)$ then $\bbP_\pl \lhd \bbP_\ra$. 
Unfortunately, combining Theorems~\ref{thm:first_moment} and \ref{thm:second_moment} only implies a bound of the type of eq.~\eqref{eq:concentration_fenergy} 
for $\alpha_d = \Theta(d^2)$.
Establishing a better concentration bound for $Z_\kappa$ in average-case matrix discrepancy appears more delicate than in the SBP, 
where fluctuations of $Z_\kappa$ can be characterized exactly~\citep{abbe2022proof}. 
It is also unclear whether concentration should hold throughout the entire satisfiable phase, given the failure of the second-moment method in parts of the phase diagram (see Theorem~\ref{thm:fail_second_moment}). 
For these reasons, we leave the question of contiguity open. 
\begin{openquestion}[Contiguity]\label{op:contiguity}
    For $\kappa \in (0,2)$ and $\tau = n/d^2 = \Theta(1)$ sufficiently large (so that $S(\bH) \neq \emptyset$ with high probability), are $\bbP_\ra$ and $\bbP_\pl$ contiguous as $d \to \infty$? 
    Can one at least establish a bound of the form~\eqref{eq:concentration_fenergy} for some $\alpha_d = \smallO(d^2)$?
\end{openquestion}

\myskip
\textbf{Step 2: is the planted solutions isolated ?}
For $l \in [n]$, denote $q_l \coloneqq 2l/n - 1 \in [-1,1]$.
A key point is that for any $q_0 = q_{l_0} < q_1 = q_{l_1} \in [-1,1]$:
\begin{align}\label{eq:planted_sol_isolated}
    \nonumber
    &\EE_\pl \left[\# \{\eps \in S(\bH) \backslash \{\eps^\star\} \, : \, q_0 \leq R(\eps, \eps^\star) \leq q_1\}\right], \\ 
    \nonumber
    &\aleq \sum_{l_0 \leq l \leq l_1} \binom{n}{l} \bbP_{\pl} \left[\left\|\sum_{i=1}^n \eps_i \bW_i\right\|_\op \leq \kappa\right], \hspace{20pt} (\textrm{for any } \eps \in \Sigma_n \textrm{ s.t. } R(\eps, \eps^\star) = q_l), \\
    \nonumber
    &\bleq \sum_{l_0 \leq l \leq l_1}\binom{n}{l} \frac{2^n}{\EE_0[Z_\kappa]} \bbP_{0} \left[\|\bW\|_\op \leq \kappa \textrm{ and } \|q_l \bW + \sqrt{1-q_l^2} \bZ\|_\op \leq \kappa\right], \\
    &\cleq \sum_{l_0 \leq l \leq l_1}\binom{n}{l} \frac{\bbP_{0} \left[\|\bW\|_\op \leq \kappa \textrm{ and } \|q_l \bW + \sqrt{1-q_l^2} \bZ\|_\op \leq \kappa\right]}{\bbP_{0} \left[\|\bW\|_\op \leq \kappa\right]}.
\end{align}
where 
in $(\rm a)$ we used the invariance of the law of $\GOE(d)$, in $(\rm b)$ the definition of the planted distribution, and in $(\rm c)$ the expression of $\EE_0[Z_\kappa]$, see Section~\ref{sec:1st_moment}.
For a given $\tau = \lim n / d^2 = \Theta(1)$ large enough so that $Z_k \geq 1$ with high probability,
one can then 
study the asymptotics of the RHS of eq.~\eqref{eq:planted_sol_isolated}
by using
similar arguments to the ones developed in Section~\ref{sec:2nd_moment}.
Assuming contiguity, 
we deduce that a sufficient condition for the freezing of solutions 
(Definition~\ref{def:freezing})
is that there exists $q_c \in (0,1)$ such that for all $\eps \in (0, 1- q_c)$:
\begin{align}\label{eq:sufficient_cond_freezing}
    \limsup_{d \to \infty} \sup_{q \in (q_c, 1-\eps)} \gamma_d(q) < 0,
\end{align}
where
\begin{align}\label{eq:def_gammad}
   \gamma_d(q) \coloneqq \frac{1}{d^2} \log \frac{\bbP_0 \left[\|\bW\|_\op \leq \kappa \textrm{ and } \|q \bW + \sqrt{1-q^2} \bZ\|_\op \leq \kappa\right]}{\bbP_0 \left[\|\bW\|_\op \leq \kappa\right]} + \tau H\left(\frac{1+q}{2}\right).
\end{align}
Unfortunately, the bounds we derive in Section~\ref{sec:2nd_moment} for the first term in eq.~\eqref{eq:def_gammad} become trivial as $q$ approaches $1$. 
We therefore leave an investigation of eq.~\eqref{eq:sufficient_cond_freezing} as an open question, which could be illuminated by a sharp second moment analysis (see Open Problem~\ref{op:sharp_2nd_mom}).
\begin{openquestion}[Second moment potential close to $q = 1$]
    Establish whether eq.~\eqref{eq:sufficient_cond_freezing} holds for $\tau = n /d^2 = \Theta(1)$ sufficiently large (as a function of $\kappa \in (0,2)$), and some $q_c < 1$.
    A natural conjecture would be given by the behavior of $\gamma(q) \coloneqq \lim_{d \to \infty} \gamma_d(q)$ for $q$ close to $1$.
\end{openquestion}

\section{Large deviations for the spectral norm of a Gaussian matrix}\label{sec:appendix_ldp}
For completeness, we provide here an alternative proof of Corollary~\ref{cor:ldp_Wop_var_principle}, which appeared in an earlier version of this manuscript,
and which relies only on the 
the main result of \cite{arous1997large}, that is a large deviation principle for the empirical eigenvalue distribution of $\GOE(d)$ matrices\footnote{
    Notice that~\cite{arous1997large} use a convention where $\GOE(d)$ matrices have off-diagonal entries with variance $1/(2d)$. We state their result adapted to our conventions.
}.
Recall that $\Sigma(\mu) \coloneqq \int \mu(\rd x) \mu(\rd y) \log |x-y|$.
\begin{proposition}[\cite{arous1997large}]\label{prop:ldp_emeasure}
    Let $\bW \sim \GOE(d)$. 
    We denote $\mu_{\bW} \coloneqq (1/d) \sum_{i=1}^d \delta_{\lambda_i(\bW)}$ its empirical spectral distribution. 
    For $\mu \in \mcM_1^+(\bbR)$, recall eq.~\eqref{eq:def_I}:
    \begin{equation*}
        I(\mu) \coloneqq -\frac{1}{2} \Sigma(\mu) + \frac{1}{4} \int \mu(\rd x) \, x^2 - \frac{3}{8}.
    \end{equation*}
    Then: 
    \begin{itemize}
        \item[$(i)$] $I : \mcM_1^+(\bbR) \to [0, \infty]$ is a strictly convex function and a good rate function, i.e.\  
        $\{I_1 \leq M\}$ is a compact subset of $\mcM_1^+(\bbR)$ for any $M > 0$.
        \item[$(ii)$]
        The law of $\mu_\bW$ satisfies a large deviation principle, in the scale $d^2$, with rate function $I$, that is for any 
        open (respectively closed) subset $O \subseteq \mcM_1^+(\bbR)$ (respectively $F \subseteq \mcM_1^+(\bbR)$): 
        \begin{equation*}
            \begin{dcases}
                \liminf_{d\to\infty} \frac{1}{d^2} \log \bbP[\mu_\bW \in O] \geq - \inf_{\mu \in O} I(\mu), \\
                \limsup_{d\to\infty} \frac{1}{d^2} \log \bbP[\mu_\bW \in F] \leq - \inf_{\mu \in F} I(\mu).
            \end{dcases}
        \end{equation*}
    \end{itemize}
\end{proposition}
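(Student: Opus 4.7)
The plan is to work directly with the explicit joint density of the eigenvalues of $\bW \sim \GOE(d)$, which by rotation invariance and the Jacobian of the spectral decomposition reads (up to normalization)
\begin{equation*}
\rd \bbP_d(\lambda_1, \ldots, \lambda_d) = \frac{1}{Z_d} \prod_{i < j} |\lambda_i - \lambda_j| \, \exp\left(-\frac{d}{4}\sum_{i=1}^d \lambda_i^2\right) \prod_{i=1}^d \rd \lambda_i.
\end{equation*}
The key identity is that, formally, $\log \prod_{i<j}|\lambda_i - \lambda_j| = \binom{d}{2} \int \!\!\int_{x \neq y} \log|x-y| \, \rd \mu_\blambda(x) \rd \mu_\blambda(y)$ and $\sum_i \lambda_i^2 = d \int x^2 \rd \mu_\blambda$, so the log-density is essentially $-d^2 \mcE(\mu_\blambda)$ with $\mcE(\mu) = -\tfrac{1}{2}\Sigma(\mu) + \tfrac{1}{4}\int x^2 \rd \mu$, up to a $\smallO(d^2)$ correction coming from the missing diagonal and the normalization $Z_d$.

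First, I would establish part $(i)$ by standard arguments: strict convexity of $-\Sigma$ on the space of signed measures of fixed mass follows from Lemma~\ref{lemma:nonc_entropy_pos}, and the quadratic confinement $\int x^2 \rd \mu$ forces tightness, while the lower semicontinuity of $-\Sigma$ (with the quadratic confinement) and the fact that any $\mu$ with $\mcE(\mu) \leq M$ has uniformly bounded logarithmic energy together imply that $\{I \leq M\}$ is compact in the weak topology. Next, for part $(ii)$, the upper bound on a closed set $F$ is obtained by a truncation argument: replace $\log|\lambda_i - \lambda_j|$ by $\log \max(|\lambda_i - \lambda_j|, \delta)$, use the lower semicontinuity of the resulting functional in $\mu_\blambda$ to get $\limsup_{d \to \infty} d^{-2} \log \bbP_d[\mu_\blambda \in F] \leq -\inf_{\mu \in F}[\mcE_\delta(\mu)] + C(d^{-1}\log d^{-1})$, and then send $\delta \downarrow 0$ using monotone convergence, combined with the Selberg computation of $Z_d$.

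For the lower bound on an open set $O$, the classical approach is to fix $\mu \in O$ with $\mcE(\mu) < \infty$ (the case $\mcE(\mu) = \infty$ is trivial), construct the ``quantile'' configuration $\lambda_i^\star \coloneqq \inf\{x \in \bbR : \mu((-\infty, x]) \geq i/(d+1)\}$, and consider a small tube $T_\eps$ of configurations where $|\lambda_i - \lambda_i^\star| \leq \eps$ for all $i$. The volume of $T_\eps$ is $\eps^d$, and on this tube one checks via Riemann-sum approximation that $\prod_{i<j} |\lambda_i - \lambda_j|$ and $\exp(-\tfrac{d}{4}\sum \lambda_i^2)$ behave as $\exp(d^2 \cdot \tfrac{1}{2}\Sigma(\mu))$ and $\exp(-d^2 \cdot \tfrac{1}{4}\int x^2 \rd \mu)$ respectively, up to $e^{\smallO(d^2)}$, giving $\bbP_d[\mu_\blambda \in O] \geq \exp(-d^2 \mcE(\mu) + \smallO(d^2))/Z_d$. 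Taking the infimum over $\mu \in O$ and using the Selberg evaluation $\lim d^{-2} \log Z_d = -\inf \mcE + 3/8$ yields the result, with the shift by $3/8$ absorbed in the definition of $I$ so that $\inf I = 0$.

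The main obstacle is the two-body logarithmic singularity: near the diagonal $\lambda_i = \lambda_j$, the density vanishes (soft repulsion), but the log-interaction is unbounded below. Handling this rigorously requires care, especially for the lower bound, where one must ensure the quantile configuration has well-separated eigenvalues (which follows if $\mu$ has no atoms, and atoms can be excluded since $\mcE(\mu) = \infty$ then) and that the Riemann-sum approximation of $\Sigma$ is controlled on the tube $T_\eps$ uniformly in $d$, which is where the bulk of the technical work lies.
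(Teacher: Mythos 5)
This proposition is not proved in the paper — it is stated with the attribution \texttt{[\cite{arous1997large}]} and used as an imported black box; the paper's ``proof'' is simply a citation to Ben Arous and Guionnet. Your proposal is therefore a reconstruction of the classical Ben Arous--Guionnet argument rather than an alternative to something in the paper, and as a high-level sketch of that argument it is essentially correct: explicit joint eigenvalue density, rewriting $\log\prod_{i<j}|\lambda_i-\lambda_j|$ and $\sum\lambda_i^2$ as (near-)functionals of $\mu_\blambda$, truncation of the logarithmic kernel for the upper bound combined with lower semicontinuity, quantile-tube construction for the lower bound, and Selberg asymptotics for $Z_d$. The lower-bound tube construction in particular is the same device the paper deploys in Appendix~A.2 to establish the large-deviation lower bound of Corollary~\ref{cor:ldp_Wop_var_principle}, so your sketch is consistent in spirit with what the paper does prove.

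Two caveats are worth recording. First, a bookkeeping slip: you write $\lim d^{-2}\log Z_d = -\inf\mcE + 3/8$, which would equal $0$ since $\inf\mcE = 3/8$; the correct Selberg evaluation (eq.~\eqref{eq:part_function_selberg} in the paper) is $\lim d^{-2}\log Z_d = -3/8 = -\inf\mcE$, and the $-3/8$ in the definition of $I$ then produces $\inf I = 0$ — your surrounding discussion shows you have the right picture, but the displayed identity is off. Second, as you yourself flag, the sketch leaves the hard technical content undone: verifying that the truncated upper bound passes to the limit $\delta\downarrow 0$ correctly (monotone convergence of the rate functions does not automatically commute with the infimum over $F$; one needs exponential tightness, which here comes from the Gaussian confinement), and controlling the Riemann-sum approximation of $\Sigma(\mu)$ on the tube $T_\eps$ uniformly in $d$, which requires the kind of near-diagonal estimate the paper carries out explicitly in eqs.~\eqref{eq:lb_Wop_1}--\eqref{eq:volume_u}. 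These are precisely the steps where the full proof in \cite{arous1997large} does its real work.
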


\myskip
\textbf{Remark --}
\cite{arous1997large} also prove large deviations results for the empirical measure of $\bW$ sampled under more general distributions, with density 
proportional to $\exp\{-d\Tr[V(\bW)]/2\}$, for a continuous potential $V$ growing fast enough at infinity.
Here we essentially adapt these results to the discontinuous potential
\begin{equation*}
    V(x) = \frac{x^2}{2} + \indi\{|x| > \kappa\} \times \infty.
\end{equation*}

\subsection{Large deviation upper bound}\label{subsec:ldp_Wop_ub}

\myskip
Let $Q \coloneqq \{\mu \in \mcM_1^+(\bbR) \, : \, \mu([-\kappa, \kappa]) = 1\}$. 
By the Portmanteau theorem, $Q$ is sequentially closed under weak convergence, and thus closed since the weak topology on $\mcM_1^+(\bbR)$ is metrizable.
We apply Proposition~\ref{prop:ldp_emeasure} to get:
\begin{align}
    \label{eq:ub_Wop_var_principle}
    \nonumber
     \limsup_{d \to \infty} \frac{1}{d^2} \log \bbP[\|\bW\|_\op \leq \kappa] &=  \limsup_{d \to \infty} \frac{1}{d^2} \log \bbP[\mu_\bW \in Q] , \\ 
    \nonumber
     &\leq - \inf_{\mu \in Q} I(\mu), \\ 
     &= - \inf_{\mu \in \mcM_1^+([-\kappa,\kappa])} I(\mu).
\end{align}
The last equality follows since $I(\mu_{|[-\kappa,\kappa]}) = I(\mu)$ for all $\mu \in Q$.
This proves the upper bound for $(1/d^2) \log \bbP[\|\bW\|_\op \leq \kappa]$ in Corollary~\ref{cor:ldp_Wop_var_principle} 

\subsection{Large deviation lower bound}

We focus now on the lower bound for $(1/d^2) \log \bbP[\|\bW\|_\op \leq \kappa]$ in Corollary~\ref{cor:ldp_Wop_var_principle}.

\myskip
Unfortunately the large deviation statement of Proposition~\ref{prop:ldp_emeasure} is not enough to obtain the corresponding lower bound to eq.~\eqref{eq:ub_Wop_var_principle}, 
because the set of probability measures supported in $[-\kappa,\kappa]$ has empty interior under the weak topology\footnote{For any $\mu \in \mcM_1^+(\bbR)$, there is a sequence $\mu_n$ weakly converging to $\mu$ while $ \supp(\mu_n) \nsubseteq [-\kappa,\kappa]$.}. 
Instead, we come back to the joint law of the eigenvalues of a $\GOE(d)$ matrix, and 
restrict the integration domain to a small neighborhood of the quantiles of $\mu_\kappa^\star$.
This strategy is similar to the one used in the proof of the large deviation lower bound in \cite{arous1997large}.

\myskip 
We use the results of Section~\ref{subsec:var_principle_1st_mom}, which are independent of Corollary~\ref{cor:ldp_Wop_var_principle}, and characterize the minimizing measure of $I(\mu)$.
In particular, by Theorem~\ref{thm:properties_inf_I}, we know that $I(\mu)$ has a unique minimizer in $\mcM_1^+([-\kappa, \kappa])$, which we denote $\mu_\kappa^\star$, 
and it has a density $\rho_\kappa$ given in eq.~\eqref{eq:def_rhokappa}.
Let $\delta \in (0,\kappa)$.
In what follows, we let $\nu_\delta \coloneqq \mu^\star_{\kappa - \delta}$.
We define the quantiles of $\nu_\delta$ as 
\begin{equation*}
    -(\kappa-\delta) = x_0^{(d)} < x_1^{(d)} < \cdots < x_d^{(d)} < x_{d+1}^{(d)} = \kappa - \delta, 
\end{equation*}
with, for all $i \in \{0, \cdots, d\}$:
\begin{equation*}
    \int_{x_i^{(d)}}^{x_{i+1}^{(d)}} \nu_\delta(u) \, \rd u = \frac{1}{d+1}. 
\end{equation*}
We will drop the subscript and write $x_i$ for $x_i^{(d)}$ to lighten notations.

\myskip
Clearly, the empirical measure $(1/d)\sum_{i=1}^d \delta_{x_i}$ weakly converges to $\nu_\delta$ as $d \to \infty$.
Notice that $\{(\lambda_i)_{i=1}^d \, : \, |\lambda_i - x_i| \leq \delta, \, \, \forall i \in [1,d]\} \subseteq [-\kappa,\kappa]^d$, 
so that from eqs.~\eqref{eq:joint_law_evalues} and \eqref{eq:part_function_selberg}:
\begin{align}
    \label{eq:lb_Wop_1}
   &e^{-\frac{3d^2}{8} + \smallO(d^2)} \bbP[\|\bW\|_\op \leq \kappa] \\ 
    \nonumber
   &\geq \int_{[-\delta,\delta]^d} \prod_{i < j} |u_i - u_j + x_i - x_j| e^{-\frac{d}{4} \sum_{i=1}^d (u_i + x_i)^2} \prod_{i=1}^d \rd u_i,\\ 
    \nonumber
   &\ageq \int_{[-\delta,\delta]^d \cap \Delta_d} \prod_{i < j} (u_j - u_i + x_j - x_i) e^{-\frac{d}{4} \sum_{i=1}^d (u_i + x_i)^2} \prod_{i=1}^d \rd u_i, \\
    \nonumber
   &\bgeq \int_{[-\delta,\delta]^d \cap \Delta_d} \prod_{i + 1< j} (x_j - x_i) \prod_{i=1}^{d-1} [x_{i+1} - x_{i}]^{1/2} [u_{i+1} - u_i]^{1/2} e^{-\frac{d}{4} \sum_{i=1}^d (\delta + |x_i|)^2} \prod_{i=1}^d \rd u_i,
\end{align}
where we defined in $(\rm a)$ the set $\Delta_d \coloneqq \{u_1 < \cdots < u_d\}$, and used 
in $(\rm b)$ that $u_i \leq u_j$ and $x_i \leq x_j$, as well as the inequality $A+B \geq \sqrt{AB}$ for $A, B \geq 0$.
The integral on the variables $u_i$ can be lower-bounded as follows:
\begin{align}\label{eq:volume_u}
    \nonumber
    \int_{[-\delta,\delta]^d \cap \Delta_d} \prod_{i=1}^{d-1} \sqrt{u_{i+1} - u_i} \prod_{i=1}^d \rd u_i 
    &= \delta^{(3d-1)/2} \int_{[-1,1]^d \cap \Delta_d} \prod_{i=1}^{d-1} \sqrt{u_{i+1} - u_i} \prod_{i=1}^d \rd u_i , \\ 
    \nonumber
    &\geq \delta^{(3d-1)/2} \prod_{i=1}^{d} \int_{-1 + \frac{2(i-1)}{d}}^{-1 + \frac{2i-1}{d}} \rd u_i \, \left(\prod_{i=1}^{d-1} \sqrt{u_{i+1} - u_i}\right) , \\ 
    &\geq \left(\frac{\delta}{d}\right)^{(3d-1)/2}.
\end{align}
Combining eqs.~\eqref{eq:lb_Wop_1} and \eqref{eq:volume_u}: 
\begin{align*}
    \liminf_{d \to \infty} \frac{1}{d^2} \log \bbP[\|\bW\|_\op \leq \kappa]  
    &\geq \frac{3}{8} - \frac{\delta^2}{4}
    + \liminf_{d \to \infty} \left[-\frac{\delta}{2d} \sum_{i=1}^d |x_i| -\frac{1}{4d} \sum_{i=1}^d x_i^2\right. \\ 
    &\left.+ \frac{1}{2d^2} \sum_{i=1}^{d-1} \log (x_{i+1} - x_i) + \frac{1}{d^2} \sum_{i,j=1}^d \log(x_j - x_i) \indi\{j > i+1\}
    \right].
\end{align*}
By the weak convergence described above, and since $\sum_{i=1}^d x_i^2 = \sum_{i=1}^d x_i^2 \indi\{|x_i| \leq \kappa\}$, we get by the Portmanteau theorem:
\begin{align}\label{eq:lb_Wop_2}
    \liminf_{d \to \infty} \frac{1}{d^2} \log \bbP[\|\bW\|_\op \leq \kappa]  
    &\geq \frac{3}{8} - \frac{\delta^2}{4} - \frac{\delta}{2} \int |x| \nu_\delta(\rd x) - \frac{1}{4} \int x^2 \nu_\delta(\rd x) \\
    \nonumber
    &\hspace{-9pt}+ \liminf_{d \to \infty}\left[\frac{1}{2d^2} \sum_{i=1}^{d-1} \log (x_{i+1} - x_i) + \frac{1}{d^2} \sum_{i,j=1}^d \log(x_j - x_i) \indi\{j > i+1\}
    \right].
\end{align}
Finally, notice that 
\begin{align*}
    \Sigma(\nu_\delta) &= 2 \sum_{0 \leq i,j \leq d} \int_{x_i}^{x_{i+1}} \nu_\delta(\rd x) \int_{x_j}^{x_{j+1}} \nu_\delta(\rd y) \, \log (y-x) \, \indi\{x < y\}, \\ 
    &\leq \sum_{i=0}^d \int_{x_i}^{x_{i+1}} \nu_\delta(\rd x) \int_{x_i}^{x_{i+1}} \nu_\delta(\rd y) \, \log |y-x| + 2 \sum_{0 \leq i < j \leq d} \frac{\log(x_{j+1} - x_i)}{(d+1)^2}, \\
    &\leq \frac{1}{(d+1)^2} \left[\sum_{i=0}^d \log(x_{i+1}-x_i) + 2 \sum_{0 \leq i < j \leq d} \log(x_{j+1} - x_i)\right], \\ 
    &\aleq \frac{1}{(d+1)^2} \left[\sum_{i=1}^{d-1} \log(x_{i+1}-x_i) + 2 \sum_{i,j=1}^d \log(x_j - x_i) \indi\{j > i+1\} \right] + \frac{2d+1}{(d+1)^2} \log 2 \kappa.
\end{align*}
We used in $(\rm a)$ that $|x_i - x_j| \leq 2 (\kappa-\delta) \leq 2\kappa$ for all $i,j$.
Using this in eq.~\eqref{eq:lb_Wop_2} gives:
\begin{align*}
    \liminf_{d \to \infty} \frac{1}{d^2} \log \bbP[\|\bW\|_\op \leq \kappa]  
    &\geq \frac{3}{8} - \frac{\delta^2}{4} - \frac{\delta}{2} \int |x| \nu_\delta(\rd x) - \frac{1}{4} \int x^2 \nu_\delta(\rd x) + \frac{1}{2} \Sigma(\nu_\delta), \\
    &\geq - \frac{\delta}{2} \int |x| \nu_\delta(\rd x)  - \frac{\delta^2}{4} - I(\nu_\delta).
\end{align*}
Recall that $\nu_\delta = \mu^\star_{\kappa - \delta}$, so that taking the limit $\delta \to 0$, we get:
\begin{align}\label{eq:lb_Wop_var_principle}
    \liminf_{d \to \infty} \frac{1}{d^2} \log\bbP[\|\bW\|_\op \leq \kappa] 
    &\geq - I(\mu_\kappa^\star) = - \inf_{\mu \in \mcM_1^+([-\kappa,\kappa])} I(\mu).
\end{align}
Combining eqs.~\eqref{eq:ub_Wop_var_principle} and \eqref{eq:lb_Wop_var_principle} ends the proof of Corollary~\ref{cor:ldp_Wop_var_principle}.
$\qed$

\end{document}